\newcommand\be{\begin{equation}}
\newcommand\ee{\end{equation}}
\newcommand\nn{\nonumber}
\newcommand\p{\partial}
\newcommand{\<}{\left <}
\renewcommand{\>}{\right >}
\DeclareMathOperator{\GL}{GL}
\DeclareMathOperator{\res}{Res}
\DeclareMathOperator{\odd}{odd}
\DeclareMathOperator{\gl}{\mathfrak{gl}}
\DeclareMathOperator\arctanh{arctanh}
\newtheorem{theorem}{Theorem}
\newtheorem{lemma}{Lemma}[section]
\newtheorem{proposition}[lemma]{Proposition}
\newtheorem{corollary}[lemma]{Corollary}
\newtheorem{remark}{Remark}[section]
\newtheorem{conjecture}{Conjecture}[section]
\newtheorem*{theorem*}{Theorem}
\newtheorem{definition}{Definition}
\numberwithin{equation}{section}
\title[KP integrability of triple Hodge integrals. III. ]{
KP integrability of triple Hodge integrals. III. \\
Cut-and-join description, KdV reduction, and topological recursions}
\author{Alexander Alexandrov}
\address[]{IBS Center for Geometry and Physics,
	Pohang University of Science and Technology (POSTECH),
	77 Cheongam-ro, Nam-gu, Pohang, Gyeongbuk, 37673, Korea
}
\email{ {\tt alexandrovsash at gmail.com}}
\subjclass[2020]{37K10, 14N35, 81R10, 14H70, 14N10}
\begin{document}

\begin{abstract} 
In this paper, we continue our investigation of the triple Hodge integrals satisfying the Calabi--Yau condition. For the tau-functions, which generate these integrals, we derive the complete families of the Heisenberg--Virasoro constraints. We also construct several equivalent versions of the cut-and-join operators. These operators describe the algebraic version of topological recursion. For the specific values of parameters associated with the KdV reduction, we prove that these tau-functions are equal to the generating functions of intersection numbers of $\psi$ and $\kappa$ classes. We interpret this relation as a symplectic invariance of the  Chekhov--Eynard--Orantin topological recursion and prove this recursion for the general $\Theta$-case. 
\end{abstract}

\maketitle

{Keywords: enumerative geometry, Hodge integrals, tau-functions, KP hierarchy, Virasoro constraints, cut-and-join operator, topological recursion}\\

\tableofcontents


\def\thefootnote{\arabic{footnote}}

\section*{Introduction}
\setcounter{equation}{0}

\subsection{Algebraic topological recursion}

The Chekhov--Eynard--Orantin topological recursion  is an effective and universal tool in mathematical physics, enumerative geometry, and combinatorics \cite{EO,EO1}. For the basic models, this recursion provides the solution to the Virasoro constraints. However, there are other ways to solve them, and sometimes they suit better for the particular needs. For instance, the Kontsevich--Witten and Br\'ezin--Gross--Witten tau-functions, which are the basic elements of the Chekhov--Eynard--Orantin topological recursion, can be described by explicit formulas in terms of the second order differential operators \cite{KSS,KS2}.

This description results from a solution of an equation given by the combination of the Virasoro constraints. It was recently noted \cite{MMMR} that
the partition functions of some interesting matrix models  are completely specified by a unique equation of this type. In this paper, we prove a vast generalization of this observation. Namely, we prove that the solution of a rather general equation, which includes the Euler operator as the leading term, exists and is unique. This solution can be constructed recursively.

For the generating functions with the natural topological expansion this recursive procedure may have a transparent topological meaning. In this case the recursion 
can be interpreted as the {\em algebraic topological recursion}. As this recursion is given by the differential operators (also known as {\em cut-and-join operators}), it is rather simple computationally. 
It is not known in general how to construct the equation leading to the cut-and-join solution. In some cases this equation can be constructed from the Virasoro and W-constrains, satisfied by the generating functions. One of the main goals of this paper is to provide the cut-and-join description for an interesting family of the enumerative geometry invariants, namely for the triple Hodge integrals satisfying the Calabi--Yau condition.

\subsection{Intersection theory and KP integrability}

Let $\overline {\mathcal M}_{g,n}$ be the Deligne--Mumford compactification of the moduli space of all compact Riemann surfaces of genus~$g$ with~$n$ distinct marked points. It is empty unless the stability condition
\begin{gather}
2g-2+n>0
\end{gather}
is obeyed. 

On $\overline {\mathcal M}_{g,n}$ consider four families of cohomological classes: $\psi_k$, $\kappa_k$, $\lambda_k$, and $\Theta_{g,n}$. 
For each marking index~$i$ we consider the cotangent line bundle ${\mathbb{L}}_i \rightarrow \overline{\mathcal{M}}_{g,n}$, whose fiber over a point $[\Sigma,z_1,\ldots,z_n]\in \overline{\mathcal{M}}_{g,n}$ is the complex cotangent space $T_{z_i}^*\Sigma$ of $\Sigma$ at $z_i$. Then $\psi_i\in H^2(\overline{\mathcal{M}}_{g,n},\mathbb{Q})$ is the first Chern class of ${\mathbb{L}}_i$. With the forgetful map for the marked point $n+1$, $\pi: \overline{\mathcal M}_{g,n+1} \rightarrow \overline{\mathcal M}_{g,n} $, we define the Miller--Morita--Mumford tautological classes \cite{Mumford}, $\kappa_k:= \pi_* \psi_{n+1}^{k+1} \in H^{2k}(\overline{\mathcal{M}}_{g,n},\mathbb{Q})$. For the Hodge bundle $\mathbb E$, a rank $g$ vector bundle over $\overline{\mathcal{M}}_{g,n}$, we consider the $i$th Chern classes $\lambda_i=c_i({\mathbb E})$ and their generating function $\Lambda_g(u)=\sum_{i=0}^g u^i \lambda_i$. Finally, following Norbury \cite{Norb}, we introduce $\Theta$-classes, $\Theta_{g,n}\in H^{4g-4+2n}(\overline{\mathcal{M}}_{g,n})$, which are also related to the super Riemann surfaces \cite{NorbS}. We refer the reader to \cite{Norb,NP1,NorbS} for a detailed description of $\Theta$-classes. 

By the Kontsevich--Witten theorem \cite{Kon92,Wit91} the generating function of the intersection numbers 
\be
\int_{\overline{\mathcal{M}}_{g,n}} \psi_1^{a_1} \psi_2^{a_2} \cdots \psi_n^{a_n}
\ee
is a tau-function of the KdV hierarchy. For their $\Theta$-version,
\be
\int_{\overline{\mathcal{M}}_{g,n}}\Theta_{g,n} \psi_1^{a_1} \psi_2^{a_2} \cdots \psi_n^{a_n},
\ee
KdV integrability of the generating function was established by Norbury \cite{Norb}. Relation between the intersection theory over the moduli spaces and solitonic integrable hierarchies is rather mysterious, and only a few instances of this relation are known so far.

The Kontsevich--Witten (KW) and Br\'ezin--Gross--Witten (BGW) tau-functions, corresponding to the intersection numbers of $\psi$ and $\Theta$ classes, play very important role in the modern mathematical physics and enumerative geometry. In particular, they are the main ingredients of such universal constructions as the Chekhov--Eynard--Orantin topological recursion \cite{EO,EO1} and the Givental decomposition \cite{Giv1,Teleman}. These tau-functions share a lot of common properties, in particular they satisfy the similar looking Virasoro constraints \cite{Fuku,DVV,GN}, and it is convenient to consider them simultaneously. The Virasoro constraints allow us  \cite{KSS,KS2} to derive the cut-and-join description of these tau-functions and to prove the algebraic version of topological recursion, associated with the expansion with respect to the negative Euler characteristics of the punctured Riemann surface. We claim that it is natural to consider all types of intersection numbers and their $\Theta$-versions simultaneously.  In many respects $\Theta$-versions are a bit simpler, perhaps because of the supersymmetry behind them.

In this paper, we investigate triple Hodge integrals satisfying the Calabi--Yau condition,
\be\label{Intin}
\int_{\overline{\mathcal{M}}_{g,n}} \Theta_{g,n}^{1-\alpha} \Lambda_g (-q) \Lambda_g (-p) \Lambda_g \left(\frac{pq}{p+q}\right)\psi_1^{a_1} \psi_2^{a_2} \cdots \psi_n^{a_n}
\ee
for $\alpha=0$ and $\alpha=1$. For $\alpha=1$ these integrals appear at the localization techniques for the Gromov--Witten invariants on the three-dimensional target manifolds \cite{GP}.
They constitute the topological vertex, which provides a diagrammatic method to compute topological string amplitudes on non-compact toric Calabi--Yau three-folds \cite{tv,tvm}. String theory interpretation of these integrals for $\alpha=0$ is not known yet.

It appears that the generating functions of these intersection numbers have a surprisingly simple description in terms of integrable hierarchies. Namely, 
these generating functions are tau-functions of the KP hierarchy \cite{H3_1}. These tau-functions are certain deformations of the KW and BGW tau-functions. This generalizes the result of Kazarian for the linear Hodge integrals \cite{Kaza}.  

In this paper, we continue our investigation of the tau-functions for the triple Hodge integrals satisfying the Calabi--Yau condition. In particular, for these tau-functions using their relation to the KW and BGW tau-functions, we derive a complete set of the Heisenberg--Virasoro constraints, see Theorem \ref{propp}. These linear constrains completely specify a formal series solution. However, they are not sufficient to construct the cut-and-join description. In Theorem \ref{T1} we construct this description using the KP hierarchy group of symmetries. Resulting algebraic topological recursion, given by the third order differential operators, is very convenient computationally. We use it to compute the first few terms of the topological expansion of the tau-functions for the triple Hodge integrals satisfying the Calabi--Yau condition, see Appendix \ref{AA}.

\subsection{KdV reduction}
Let us also consider the intersection numbers of $\psi$, $\kappa$ and $\Theta$ classes 
\be\label{kain}
\int_{\overline{\mathcal{M}}_{g,n}} \Theta_{g,n}^{1-\alpha} e^{\sum_{j=1}^\infty s_j \kappa_j}\psi_1^{a_1} \psi_2^{a_2} \cdots \psi_n^{a_n}.
\ee
According to \cite{KMZ,MZ,NorbS} the generating functions of these intersection numbers can be identified with the KW (for $\alpha=1$) and BGW (for $\alpha=0$) tau-functions with shifted arguments.

For $p=-2q$ the generating functions of the triple Hodge integrals satisfying the Calabi--Yau condition \eqref{Intin} are the tau-functions of the KdV hierarchy \cite{H3_2}. In Theorem \ref{taueq} we prove that in this case the tau-functions coincide with the generating functions of intersection numbers \eqref{kain} for the specific values of $s_k$ variables. The proof is based on the Virasoro constraints and the explicit form of the group element of the KP hierarchy symmetry. A geometric interpretation of this relation is not clear yet. 

This theorem allows us to identify the intersection numbers \eqref{Intin} with \eqref{kain} for arbitrary genus $g$ and number of marked points $n$ and specific values of parameters. We interpret this identification in terms of the symplectic invariance of the Chekhov--Eynard--Orantin topological recursion. 
For $\alpha=1$, where the topological recursion for \eqref{Intin} is known \cite{BM,Chen,Zhou1}, we use the symplectic invariance to find an independent proof of the relations between the intersection numbers.  For $\alpha=0$ we prove the topological recursion for the general triple $\Theta$-Hodge integrals satisfying the Calabi--Yau condition.

\addtocontents{toc}{\protect\setcounter{tocdepth}{1}}
\subsection*{Organization of the paper}

In Section \ref{S1} we remind the reader some basic properties of the Heisenberg--Virasoro symmetry algebra of the KP hierarchy. Section \ref{S2} is devoted to the origin of the cut-and-join description. In particular, we derive it as a solution of the differential equation given by a rather general deformation of the Euler operator and its relation to the topological expansion. In Section \ref{S3} we remind the main properties of the KW and BGW tau-functions, and investigate how these properties transform under the action of the Virasoro group. In Section \ref{H3} we construct the Heisenberg--Virasoro constraints and the cut-and-join operators for the tau-functions, which describe the triple Hodge integrals satisfying the Calabi--Yau condition. Section \ref{S5} is devoted to the reduction to the KdV hierarchy for the specific values of the parameters. We prove that in this case the tau-functions coincide with the tau-functions for the intersection numbers containing the Miller--Morita--Mumford classes. In Section \ref{S6} we describe this relation using symplectic invariance of the Chekhov--Eynard--Orantin topological recursion and prove this recursion for the general triple $\Theta$-Hodge integrals satisfying the Calabi--Yau condition.

\subsection*{Acknowledgments}

This work was supported by the Institute for Basic Science (IBS-R003-D1). 

\subsection*{Conflict of interest statement}

We have no conflicts of interest to disclose.

\section{Symmetries of KP hierarchy and Heisenberg--Virasoro algebra}\label{S1}
\addtocontents{toc}{\protect\setcounter{tocdepth}{2}}

It is well known that a certain central extension of the algebra $\gl(\infty)$ and corresponding group $\GL(\infty)$ act on the space of KP tau-functions \cite{F,MJ}. From the boson-fermion correspondence we also know how to identify the operators, acting on the tau-functions with the operators, acting on the Sato Grassmannian. Often the Sato Grassmannian parametrization is more convenient, and we will use it for computations.
 In this section, we remind the reader the properties of the important Heisenberg--Virasoro subalgebra of $\gl(\infty)$; for more details see, e.g., \cite{A1}.

Consider the space ${\mathbb C}[\![{\bf t}]\!]$ of formal power series of infinitely many variables ${\bf t}=\{ t_1, t_2,\dots\}$. We will denote by $\widehat{\dots}$ differential operators acting on this space. 
The {\em Heisenberg--Virasoro subalgebra} ${\mathcal L}$ of $\gl(\infty)$ is generated by the operators
\be
\widehat{J}_k =
\begin{cases}
\displaystyle{\frac{\p}{\p t_k} \,\,\,\,\,\,\,\,\,\,\,\, \mathrm{for} \quad k>0},\\[2pt]
\displaystyle{0}\,\,\,\,\,\,\,\,\,\,\,\,\,\,\,\,\,\,\, \mathrm{for} \quad k=0,\\[2pt]
\displaystyle{-kt_{-k} \,\,\,\,\,\mathrm{for} \quad k<0,}
\end{cases}
\ee
the unit, and the Virasoro operators
\be
\label{virfull}
\widehat{L}_m=\frac{1}{2} \sum_{a+b=-m}a b t_a t_b+ \sum_{k=1}^\infty k t_k \frac{\p}{\p t_{k+m}}+\frac{1}{2} \sum_{a+b=m} \frac{\p^2}{\p t_a \p t_b}.
\ee
These operators satisfy the commutation relations
\begin{align}\label{comre}
\left[\widehat{J}_k,\widehat{J}_m\right]&=k \delta_{k,-m},\nn\\
\left[\widehat{L}_k,\widehat{J}_m\right]&=-m \widehat{J}_{k+m},\\
\left[\widehat{L}_k,\widehat{L}_m\right]&=(k-m)\widehat{L}_{k+m}+\frac{1}{12}\delta_{k,-m}(k^3-k)\nn
\end{align}
and generate a Lie algebra.
We decompose ${\mathcal L}$ as ${\mathcal L}={\mathcal L}_+\oplus  {\mathcal L}_0 \oplus {\mathcal L}_-$, where ${\mathcal L}_+$ (${\mathcal L}_-$) are generated by the operators $\widehat{L}_k$ and $\widehat{J}_k$ with positive (negative) $k$, and ${\mathcal L}_0$ is generated by $\widehat{L}_0$ and a unit.

The Heisenberg--Virasoro group ${\mathcal V}$ is generated by the operators $\widehat{J}_k$,  $\widehat{L}_k$ and a unit,
\be
{\mathcal V}:=\left.\{C \, e^{\sum a_k \widehat{J}_k +b_{k}\widehat{L}_k} \right| a_k,b_k,C \in {\mathbb C}\}.
\ee
For this group we also have a decomposition, ${\mathcal V}={\mathcal V}_+\oplus  {\mathcal V}_0 \oplus {\mathcal V}_-$. This group acts on the space of the tau-functions of the KP hierarchy.

The Heisenberg--Virasoro algebra \eqref{comre} is a central extension of the algebra of operators, acting on the Sato Grassmannian and 
generated by
\be\label{virw}
{\mathtt j}_m=z^m,\,\,\,\,\,\,\,\,\,
{\mathtt l}_m=-z^m\left(z\frac{\p}{\p z}+\frac{m+1}{2}\right).
\ee
Operators ${\mathtt l}_m$ generate the Witt algebra.
These operators obey the commutation relations (\ref{comre}) with omitted central term. Thanks to this relation between the commutation relations it is convenient to use the operators ${\mathtt j}_m$ and ${\mathtt l}_m$ for computations in the group ${\mathcal V}$.

The action of the ${\mathcal V}_0$ and  ${\mathcal V}_-$ groups on the space of the functions of ${\bf t}$ is rather simple -- it is equivalent to the linear change of the variables supplemented by the 
multiplication by a function. Below we will mostly focus on the group ${\mathcal V}_+$.


\section{Algebraic topological recursion and cut-and-join operators}\label{S2}

In this section, we describe how a single equation of a certain type satisfied by the generating functions can lead to the recursion relations and the cut-and-join description which specify a unique solution. Moreover, 
for the generating functions of the enumerative geometry and mathematical physics this recursion can have a natural topological interpretation.

\subsection{General cut-and-join description}

Let us consider (in general, infinitely many) formal variables ${\bf q}=\{q_1,q_2,q_3,\dots\}$. We associate with them positive integer degrees $a_k\in {\mathbb Z}_{>0}$. Consider corresponding Euler operator 
\be
E=\sum_k a_k q_k \frac{\p}{\p q_k}.
\ee
We assume that there are only finitely many variables with any finite degree, that is, $\# q_k (a_k<n)<\infty $ for any $0<n<\infty$. We associate the degree $-a_k$ to the operators $\frac{\p}{\p q_k}$ and consider a differential equation
\be\label{Weq1}
\left(E -\sum_{k=1}^\infty k V_{k}\right) \cdot Z({\bf q})=0,
\ee
where ${V}_k$ are some differential operators of degree $k$ with coefficients given by formal series in variables ${\bf q}$.  It appears that such an equation always has a unique solution.
\begin{proposition}\label{Prop1}
Equation \eqref{Weq1} has a unique solution in ${\mathbb C}[\![{\bf q}]\!]$ up to a normalization.
\end{proposition}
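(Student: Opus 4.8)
The plan is to exploit the $\mathbb Z_{\ge 0}$-grading of ${\mathbb C}[\![{\bf q}]\!]$ by total degree and convert \eqref{Weq1} into an explicit triangular recursion for the homogeneous components of $Z$. First I would set up the grading: assign to a monomial $\prod_k q_k^{m_k}$ the degree $\sum_k a_k m_k$, and let $R_d\subset{\mathbb C}[\![{\bf q}]\!]$ be the linear span of monomials of degree $d$. The hypothesis $\#\{q_k: a_k<n\}<\infty$ for every finite $n$ means that for each fixed $d$ only finitely many variables can enter a degree-$d$ monomial, and with bounded exponents, so $\dim R_d<\infty$; since all $a_k$ are positive, $R_0={\mathbb C}\cdot 1$. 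Every $f\in{\mathbb C}[\![{\bf q}]\!]$ has a unique expansion $f=\sum_{d\ge 0}f_d$ with $f_d\in R_d$, and $E$ acts on this decomposition by $E f_d=d\,f_d$. Because $V_k$ is a differential operator of degree $k$, it maps $R_d$ into $R_{d+k}$ (each of its terms shifts degree by exactly $k$, and only finitely many terms act nontrivially on the finite-dimensional $R_d$), so the component of $\sum_{k\ge1}kV_kZ$ in $R_d$ is the \emph{finite} sum $\sum_{k=1}^{d}kV_kZ_{d-k}$, and the left-hand side of \eqref{Weq1} is a well-defined element of ${\mathbb C}[\![{\bf q}]\!]$.

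Projecting \eqref{Weq1} onto $R_d$ then yields, writing $Z=\sum_{d\ge0}Z_d$,
\be
d\,Z_d=\sum_{k=1}^{d}k\,V_kZ_{d-k},\qquad d\ge 0.
\ee
For $d=0$ this is the empty identity $0=0$, so $Z_0$ may be taken to be an arbitrary element of $R_0={\mathbb C}$; this is the normalization freedom. For $d\ge 1$ I divide by $d$ to obtain
\be
Z_d=\frac1d\sum_{k=1}^{d}k\,V_kZ_{d-k},
\ee
which determines $Z_d$ uniquely from $Z_0,\dots,Z_{d-1}$. Hence, once $Z_0$ is fixed, all homogeneous components are determined; conversely the series defined by this recursion manifestly solves \eqref{Weq1}. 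Therefore the solution space is one-dimensional: every solution is a scalar multiple of the unique solution normalized by $Z_0=1$ (and $Z=0$ is the only solution with $Z_0=0$), which is exactly uniqueness up to normalization.

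I do not expect a genuine obstacle here; the whole argument is a triangularity statement, namely that on $R$ the operator $E-\sum_k kV_k$ is block lower triangular with the invertible scalar $d\,\mathrm{id}$ on the diagonal block $R_d$ for $d\ge 1$. The only point that needs care is the bookkeeping that legitimizes the degree decomposition: one must use the finiteness hypothesis on the $a_k$ to see that each $R_d$ is finite-dimensional (so that $E$ acts semisimply with the integer eigenvalues $d$ and each application $V_kZ_{d-k}$ makes sense), and one must check that $\sum_k kV_k$ strictly raises degree so that the degree-$d$ projection of the equation involves only $Z_{<d}$. If instead some $V_k$ were allowed a degree-preserving part the argument would fail, since $E$ restricted to $R_d$ could then cease to be invertible.
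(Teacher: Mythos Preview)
Your proof is correct and follows essentially the same approach as the paper: decompose $Z$ into homogeneous components for the grading induced by $E$, project the equation onto degree $d$, and solve the resulting triangular recursion $Z_d=\frac{1}{d}\sum_{k=1}^{d}kV_kZ_{d-k}$ with $Z_0\in\mathbb{C}$ arbitrary. Your write-up is somewhat more explicit about the finiteness bookkeeping (that $\dim R_d<\infty$ and that the operator action is well defined), but the argument is the same.
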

\begin{proof}
Let us find this solution explicitly. Consider the expansion 
\be\label{Zexp}
Z({\bf q})=\sum_{k=0}^\infty Z_{k}({\bf q}),
\ee
where $Z_0\in{\mathbb C}$ is a constant, and the degree of $Z_k$ is $k$, that is, $E\cdot Z_k = k Z_k$. Then the term of degree $k$ on the left hand side of \eqref{Weq1} should vanish,
\be
E \cdot Z_{k} -\sum_{j=0}^{k-1}(k-j) {V}_{k-j} \cdot Z_{j}=0.
\ee
Since $E\cdot Z_{k} =k Z_{k}$, the unique solution of \eqref{Weq1} is given by the recursion
\be\label{eqrec}
Z_{k}=\frac{1}{k} \sum_{j=0}^{k-1}(k-j) {V}_{k-j} Z_{j}.
\ee
Because of the finiteness of the variables with a given finite degree, all $Z_{j}$ are polynomials, and the action on them of all finite degree operators is well defined.
Arbitrary constant $Z_0$ fixes the total normalization of the solution, which completes the proof.
\end{proof}
Equation \eqref{eqrec} provides a recursion which completely specifies the solution for a given initial condition $Z_0=Z({\bf 0})$ which can be fixed by $Z({\bf 0})=1$. This solution is described by the operators $V_k$. There are several possibilities depending on the relations between the operators $V_k$.
\begin{itemize}
\item If ${V}_k=\delta_{k,r} {V}_r$ for some $r>0$,  we have an explicit solution
\be\label{symsol}
Z({\bf q})=\exp{\left({V}_r\right)}\cdot 1.
\ee
\item In general, if the operators ${V}_k$ constitute a commutative family, $\left[{V}_k,{V}_m\right]=0$ for all $k,m>0$, then
\be\label{symsol1}
Z({\bf q})=\exp\left({\sum_{k=1}^\infty {V}_k}\right)\cdot 1.
\ee
If $V_k=0$ for all $k>r$ for some $r$, then the sum in the right hand side of \eqref{eqrec} contains at most $r$ terms.
\item If the operators $V_k$ do not commute, then the solution is given by the ordered exponential, which we denote by $``\exp\left(\dots\right)"$,
\be\label{fors}
Z({\bf q})= ``\exp\left({\sum_{k=1}^\infty{V}_k}\right)" \cdot 1.
\ee
\end{itemize}
\begin{remark}
Equation of the form \eqref{Weq1} is never unique. Therefore, in general it is not clear how to choose the best representative of equations of this type and the corresponding solution \eqref{fors}. At least for some interesting models arising in enumerative geometry and matrix models theory the generating functions can be represented by \eqref{symsol}. 
 \end{remark}

In particular, one can consider the functions of the variables $t_k$.
We introduce a ${\mathbb Z}$ grading on functions and differential operators on this space by assigning 
\be\label{deg}
\deg t_k =k,\quad\quad \deg \frac{\p}{\p t_k} =-k.
\ee
Then $\deg \widehat{L}_k =\deg  \widehat{J}_k  =- k$ and degree of the central element of the Heisenberg--Virasoro algebra is zero. 
The Euler operator $E=\sum_{k=1}^\infty k t_k \frac{\p}{\p t_k}$ coincides with $\widehat{L}_0$.
Let us consider a differential equation
\be\label{Weq}
\left(\widehat {L}_0 -\sum_{k=1}^\infty k  \widehat{V}_{k}\right) \cdot Z({\bf t})=0,
\ee
where $\widehat{V}_k$ are some differential operators (not necessarily from the $\gl(\infty)$ algebra associated with the KP hierarchy) with $\deg \widehat{V}_k=k$. From Proposition \ref{Prop1} it follows that such an equation always have a unique solution.
The following simple statement is a generalization of the observation of \cite{MMMR}: 
\begin{corollary}
Equation \eqref{Weq} has a unique solution in ${\mathbb C}[\![{\bf t}]\!]$ up to a normalization.
\end{corollary}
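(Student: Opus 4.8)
The plan is to recognize equation \eqref{Weq} as the special case of equation \eqref{Weq1} obtained by taking the variables ${\bf q}={\bf t}$, the degrees $a_k=k$, and the operators $V_k=\widehat{V}_k$, so that the Corollary follows at once from Proposition \ref{Prop1}. There is no new ingredient to introduce; the whole point is to check that the general framework of Section \ref{S2} applies verbatim.

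First I would verify the hypotheses of Proposition \ref{Prop1}. The degrees $a_k=k$ are positive integers, and for each finite $n$ there are exactly the $n-1$ variables $t_1,\dots,t_{n-1}$ of degree less than $n$, so the finiteness condition $\#\{q_k : a_k<n\}<\infty$ holds. The grading \eqref{deg} assigns $\deg\partial/\partial t_k=-k=-a_k$, matching the convention used for \eqref{Weq1}, and under this grading the Euler operator $\sum_k a_k q_k\,\partial/\partial q_k$ is precisely $\widehat{L}_0$. By assumption each $\widehat{V}_k$ is a differential operator of degree $k$ whose coefficients are formal power series in ${\bf t}$. Hence \eqref{Weq} is literally of the form \eqref{Weq1}, and Proposition \ref{Prop1} yields existence and uniqueness in ${\mathbb C}[\![{\bf t}]\!]$ up to the choice of the constant $Z_0=Z({\bf 0})$, with the solution produced degree by degree through the recursion \eqref{eqrec}, namely $Z_k=\tfrac1k\sum_{j=0}^{k-1}(k-j)\,\widehat{V}_{k-j}Z_j$.

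The only point I would flag explicitly is that the $\widehat{V}_k$ are not assumed to lie in the $\gl(\infty)$ algebra associated with the KP hierarchy; but this is immaterial, since the argument behind Proposition \ref{Prop1} uses only that $\widehat{V}_k$ raises the degree by $k$ and has formal-series coefficients, which is exactly what makes $\widehat{V}_{k-j}$ act in a well-defined way on the polynomial $Z_j$ and makes the sum in \eqref{eqrec} finite. Consequently there is no genuine obstacle here: the content of the Corollary is entirely subsumed by Proposition \ref{Prop1}, and the proof amounts to pointing this out together with the identification $\widehat{L}_0=E$ under the grading \eqref{deg}.
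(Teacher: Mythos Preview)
Your proposal is correct and matches the paper's approach exactly: the paper does not give a separate proof of this corollary but simply states that it follows from Proposition~\ref{Prop1}, which is precisely the identification you spell out (take ${\bf q}={\bf t}$, $a_k=k$, $E=\widehat{L}_0$, and $V_k=\widehat{V}_k$). Your verification of the finiteness hypothesis and your remark that membership in $\gl(\infty)$ is irrelevant are both accurate and align with the paper's own comments.
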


\begin{remark}
For the KP hierarchy one can construct the equation of the form \eqref{Weq} using the product of the canonical Kac--Schwarz operators \cite{H3_2}. In this case the constructed operators $ \widehat{V}_{k}$ belong to the $\gl(\infty)$ algebra of KP symmetries. However, already for the Kontsevich--Witten and Br\'ezin--Gross--Witten tau-functions the answer, provided by this construction, does not coincide with the simplest description obtained from the solution of the Virasoro constraints, see Section \ref{SHV}.
\end{remark}



\subsection{Algebraic topological recursion}
Generating functions in enumerative geometry and mathematical physics often depend on a parameter $\hbar$ associated to some {\em topological expansion}.
Often it is related to the Euler characteristics $\chi$ of certain Riemann surface with punctures, in this case the topological expansion is given by $Z=\exp \sum_\chi \hbar^{-\chi} F_\chi $, or
\be\label{topexp}
Z=\sum_{k=0}^\infty \hbar^k Z^{(k)}.
\ee
Assume that the generating function $Z$ is described by the equation \eqref{Weq}, the more general equation \eqref{Weq1} can be considered similarly.
Let the generating function $Z$ also satisfy the dimension constraint
\be\label{homog}
a \hbar \frac{\p}{\p \hbar} Z=  \widehat {L}_0 \cdot Z
\ee
for some $a\in{\mathbb Z}_{>0}$. 
\begin{remark}
The constraint \eqref{homog} makes the parameter $\hbar$ redundant, however, it is convenient to use it. 
\end{remark}
Assume that the operators in \eqref{Weq} are of the form $\widehat{V}_k=\hbar^{\frac{k}{a} } \widehat{W}_k$, where $ \widehat{W}_k$ do not depend on $\hbar$. Then they are nontrivial only if $k/a\in {\mathbb Z}$, and the equation \eqref{Weq} leads to
\be\label{gene}
a \hbar \frac{\p}{\p \hbar} Z= \sum_{k=1}^\infty k a \,\hbar^{k} \widehat{W}_{ka} \cdot Z.
\ee
\begin{proposition}\label{Prop2}
For any initial condition  $Z^{(0)}$ if a solution to the equation \eqref{gene} of the form \eqref{topexp} exists, it is unique.
\end{proposition}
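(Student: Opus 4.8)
The plan is to rerun the coefficient-extraction argument from the proof of Proposition \ref{Prop1}, but now graded by powers of $\hbar$ rather than by degree in ${\bf q}$. First I would insert the topological expansion \eqref{topexp}, $Z=\sum_{m\ge 0}\hbar^m Z^{(m)}$, into the equation \eqref{gene}. The left-hand side becomes $a\sum_{m\ge 0}m\,\hbar^m Z^{(m)}$, while the right-hand side becomes $a\sum_{k\ge 1}\sum_{j\ge 0}k\,\hbar^{k+j}\,\widehat{W}_{ka}\cdot Z^{(j)}$. Extracting the coefficient of $\hbar^m$ from both sides, the case $m=0$ yields the empty identity $0=0$, so that $Z^{(0)}$ is unconstrained and genuinely plays the role of the free initial condition; for $m\ge 1$, after cancelling the common factor $a$, one obtains
\be
m\,Z^{(m)}=\sum_{k=1}^m k\,\widehat{W}_{ka}\cdot Z^{(m-k)}.
\ee

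Since $m\ge 1$, this relation can be solved for the top coefficient,
\be
Z^{(m)}=\frac 1m\sum_{k=1}^m k\,\widehat{W}_{ka}\cdot Z^{(m-k)},
\ee
which expresses $Z^{(m)}$ entirely through the lower ones $Z^{(0)},\dots,Z^{(m-1)}$. Hence, by induction on $m$, the whole sequence $\{Z^{(m)}\}_{m\ge 1}$, and therefore $Z$ itself, is uniquely determined once $Z^{(0)}$ is fixed; this is precisely the asserted uniqueness. That the right-hand side of the recursion is meaningful is inherited, as in Proposition \ref{Prop1}, from the standing assumptions: each $\widehat{W}_{ka}$ is a fixed differential operator acting in a well-defined way on ${\mathbb C}[\![{\bf t}]\!]$ (just as $\widehat{L}_0$ itself does), so the recursion makes sense term by term.

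I do not expect a genuine obstacle, since the statement claims only uniqueness and not existence: it suffices to show that the coefficients of an arbitrary solution of the form \eqref{topexp} are forced, which the coefficient comparison accomplishes at once. The only point deserving a word of care is that the factor multiplying $Z^{(m)}$ on the left of the recursion is $m$, which is nonzero exactly when $m\ge 1$ — this is what makes the recursion well-posed — whereas its vanishing at $m=0$ is precisely why $Z^{(0)}$ must be supplied as data. One may also note, in parallel with the Corollary following Proposition \ref{Prop1}, that if in addition the equation \eqref{Weq} together with the dimension constraint \eqref{homog} is imposed, then a solution does exist, so that ``the solution'' is then well defined; but this is not needed for the present statement.
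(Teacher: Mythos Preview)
Your argument is correct and essentially identical to the paper's own proof: both substitute the expansion \eqref{topexp} into \eqref{gene} and extract the coefficient of $\hbar^m$ to obtain the recursion $Z^{(m)}=\tfrac{1}{m}\sum_{k=1}^{m}k\,\widehat{W}_{ka}\cdot Z^{(m-k)}$ (the paper writes this as $Z^{(k)}=\tfrac{1}{k}\sum_{j=0}^{k-1}(k-j)\,\widehat{W}_{a(k-j)}\cdot Z^{(j)}$, which is the same after the change of summation index). Your additional remarks on the $m=0$ case and on uniqueness versus existence are accurate and in keeping with the paper's own commentary immediately following the proof.
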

\begin{proof}
Indeed, if we substitute \eqref{topexp} into \eqref{gene}, we get a recursion
\be\label{TRR}
{Z}^{(k)}=\frac{1}{k} \sum_{j=0}^{k-1}  (k-j) \widehat{W}_{a(k-j)}\cdot {Z}^{(j)},
\ee
which allows us to find all ${Z}^{(k)}$ recursively starting from the initial condition $Z^{(0)}$.
\end{proof}
Note that we do not use Proposition \ref{Prop1}, therefore, Proposition \ref{Prop2} is valid even if the generating function does not satisfy any equation \eqref{Weq1}.
However, if this equation is satisfied, then $Z^{(0)}$ is a constant and the solution always exists. 
 
Equation of the form \eqref{TRR} describes the {\em algebraic topological recursion}.
The solution is given by ordered exponential
\be
Z= ``\exp\left({\sum_{k=1}^\infty \hbar^k \widehat{W}_{ka}}\right)" \cdot Z^{(0)},
\ee
which simplifies as in \eqref{symsol} or \eqref{symsol1} for the particular cases. Operators $\widehat{W}_k$ describe the change of topology, that's why below slightly abusing notation we will call them the {\em cut-and-join} operators. 

In the simplest case $ \widehat{W}_{ka} =\delta_{k,1} \widehat{{W}}_a$, and the generating function is given by
\be\label{ananeq}
Z= \exp\left({  \hbar  \widehat{W}_{a}}\right) \cdot Z^{(0)}.
\ee
In this case the algebraic topological recursion have a particularly simple form
\be
Z^{(k)}=\frac{1}{k}  \widehat{W}_{a} \cdot Z^{(k-1)}.
\ee

Let us list some enumerative geometry problems (and associated matrix models), which can be described by the cut-and-join operators 
\begin{itemize}

\item Fat graphs or maps (Hermitian matrix model),  \cite{MS};

\item Intersection theory of $\psi$ classes on the moduli spaces (Kontsevich model), \cite{KSS};

\item Intersection theory of $\psi$ and Norbury's classes on the moduli spaces (Br\'ezin--Gross--Witten model),  \cite{KS2};

\item Weighted Hurwitz numbers (certain multi-matrix models),  \cite{GJ} for ordinary Hurwitz numbers, \cite{ACEHf} and references therein for the weighted Hurwitz numbers; 

\item Open intersection theory (Kontsevich--Penner model), \cite{open};

\item Intersection theory  of $\psi$ and  Witten's classes (Generalized Kontsevich model),  \cite{Zhou,M3}.

\end{itemize}
The last two examples are described by the non-commuting families of the cut-and-join operators.

The Chekhov--Eynard--Orantin topological recursion \cite{EO,EO1} also describes a recursive procedure in the negative Euler characteristics. However, many properties of this recursion, briefly outlined in Section \ref{S6}, are essentially different from those of the algebraic topological recursion \eqref{TRR}.

The algebraic recursion \eqref{TRR} is given by differential operators with polynomial coefficients,  while the Chekhov--Eynard--Orantin recursion is given in terms of the residues on the spectral curve.
The topological recursion \eqref{TRR} is formulated in terms of the coefficients of the generating function $Z$, or non-connected correlation functions, while the Chekhov--Eynard--Orantin topological recursion is naturally formulated in terms of connected correlation functions, or the coefficients of expansion of $\log Z$. The recursion \eqref{TRR} is linear in the non-connected contributions, while the Chekhov--Eynard--Orantin recursion in quadratic in connected correlation functions.

The topological recursion 
\eqref{TRR} is often very convenient for computations.  It would be interesting to find a relation between the algebraic topological recursion, described in this paper, and geometrical topological recursion of Chekhov--Eynard--Orantin. In particular, there should be a way to construct the cut-and-join operators from the topological recursion data. We believe that it is given by some version of the inverse Laplace transform \cite{EMS}.

\begin{conjecture}
The Chekhov--Eynard--Orantin topological recursion with simple ramification points can always be described by the cut-and-join formula \eqref{ananeq} with operator $ \widehat{W}_{a}$  cubic in $\widehat{J}_k$. Higher ramification points correspond to the higher cut-and-join operators. 
\end{conjecture}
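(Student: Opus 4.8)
The plan is to deduce the conjecture from the Givental--Eynard--Orantin dictionary, the cut-and-join description of the Kontsevich--Witten tau-function, and the Heisenberg--Virasoro symmetry of Section~\ref{S1}, in a way that mirrors and generalizes the argument behind Theorem~\ref{T1}.

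First I would invoke the Dunin-Barkowski--Orantin--Shadrin--Spitz identification \cite{DOSS} of the Chekhov--Eynard--Orantin recursion with Givental's quantization of quadratic Hamiltonians. For a spectral curve all of whose ramification points are simple, the recursion is local at the ramification points, and the local building block is the Airy curve, whose partition function is $Z_{\mathrm{KW}}$. Restricted to a single ramification point, the partition function is therefore $g\cdot Z_{\mathrm{KW}}$, where $g$ is a Givental group element built from a translation and an $R$-matrix action; in one variable the quantized operator $\widehat{R}$ is the exponential of an (in general infinite) linear combination of expressions of degree $\le 2$ in the Heisenberg modes, and the translation is the exponential of a linear combination of the modes. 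Translating this, through the boson--fermion correspondence and the results of \cite{H3_1}, into an operator acting on KP tau-functions, $g$ becomes an element of the group generated by operators polynomial of degree $\le 2$ in the currents $\widehat{J}_k$ --- the Virasoro operators $\widehat{L}_k$ of \eqref{virfull} being the distinguished such quadratics.

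Second I would recall from \cite{KSS} that $Z_{\mathrm{KW}}=\exp(\hbar\,\widehat{W}_{\mathrm{KW}})\cdot Z^{(0)}_{\mathrm{KW}}$ with $\widehat{W}_{\mathrm{KW}}$ a differential operator cubic in the currents $\widehat{J}_k$. Conjugating by $g$ gives $g\cdot Z_{\mathrm{KW}}=\exp\!\big(\hbar\, g\,\widehat{W}_{\mathrm{KW}}\,g^{-1}\big)\cdot\big(g\cdot Z^{(0)}_{\mathrm{KW}}\big)$, so the candidate cut-and-join operator is $\widehat{W}=g\,\widehat{W}_{\mathrm{KW}}\,g^{-1}$ and the candidate initial datum is $Z^{(0)}=g\cdot Z^{(0)}_{\mathrm{KW}}$ (morally the contribution of the unstable ``disk'' and ``cylinder'' geometries). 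The cubicity reduces to the algebraic fact that, in the associative algebra of differential operators, the adjoint action of any operator quadratic in the $\widehat{J}_k$ preserves the subspace of operators polynomial of degree $\le 3$ in the $\widehat{J}_k$: every elementary contraction $[\widehat{J}_i,\widehat{J}_j]=i\,\delta_{i,-j}$ from \eqref{comre} is a scalar, so each application of $\mathrm{ad}$ lowers the total current-degree by two and the degree $\le 3$ subspace is stable; hence it is stable under $\mathrm{Ad}_g$ for $g$ as above. Since a polynomial of degree $\le 3$ in the $\widehat{J}_k$ (with $\widehat{J}_k=\partial_{t_k}$ for $k>0$) is in particular a differential operator of order $\le 3$, the order of $\widehat{W}$ stays bounded, and the $\hbar$-grading \eqref{homog}--\eqref{topexp} controls the rest: at each order in $\hbar$ one works with homogeneous polynomials in finitely many variables, so the conjugation series terminates order by order and $\widehat{W}$ acts on ${\mathbb C}[\![{\bf t}]\!]$. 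Writing the resulting equation in the form \eqref{gene} and invoking Proposition~\ref{Prop2} then yields the cut-and-join formula \eqref{ananeq}.

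The main obstacle is to make the conjugation step genuinely effective: one must show that the coefficients of $\widehat{W}=\sum c_{ijk}\,\widehat{J}_i\widehat{J}_j\widehat{J}_k$, each an a priori infinite sum of contractions of the infinitely many terms of $\widehat{W}_{\mathrm{KW}}$ against the infinitely many quadratic generators in $\log g$, actually converge (as elements of the coefficient ring of the spectral curve) and define a \emph{locally finite} operator rather than a mere element of some completion --- exactly the phenomenon flagged in the remark after \eqref{Weq}, where the Kac--Schwarz-type construction is seen to produce non-minimal operators. A subsidiary difficulty is the passage to several ramification points: there the Givental/DOSS answer is a product of local Airy contributions tied together by the Bergman-kernel (``cylinder'') propagators, which is not a single KP tau-function, so one has to assemble a global $\widehat{W}$ as a sum of cubic pieces localized at the ramification points together with the quadratic cross-terms. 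Finally, the clause on higher ramification points follows the same scheme with the Airy block replaced by the $r$-th generalized Kontsevich model, for which a cut-and-join description by the (in general non-commuting) family of operators of degree $r+1$ in the $\widehat{J}_k$ is known from \cite{Zhou,M3}, and with the Virasoro algebra replaced by the corresponding $W^{(r)}$-algebra of KP symmetries.
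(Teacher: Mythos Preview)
The statement you are addressing is a \emph{conjecture} in the paper; the paper offers no proof, only the heuristic that the cut-and-join description should be obtainable from the topological recursion data by ``some version of the inverse Laplace transform.'' So there is no paper proof to compare against, and the question is whether your sketch closes the gap.

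It does not. Your strategy is the natural generalization of the argument behind Theorem~\ref{T1}: write the partition function as a Givental operator $g$ acting on $\tau_{KW}$, then conjugate the cubic $\widehat{W}_{KW}$ by $g$. The algebraic observation that $\mathrm{ad}$ of a quadratic in $\widehat{J}_k$ preserves the degree-$\le 3$ filtration is correct. But the paper's argument works precisely because, for the triple Hodge case, the Givental operator lands in the Heisenberg--Virasoro group ${\mathcal V}_+$ (this is the content of \cite{H3_1}), so that Lemma~\ref{lemma32} gives an explicit, \emph{triangular} transformation $\widehat{J}_k\mapsto\sum_{m\ge k}\rho[k,m]\widehat{J}_m$. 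That triangularity is what makes the conjugated cubic locally finite and well-defined on ${\mathbb C}[\![{\bf t}]\!]$. For a generic $R$-matrix the quantized quadratic lies outside the Virasoro subalgebra, the induced action on the $\widehat{J}_k$ mixes positive and negative modes nontrivially, and there is no a priori reason the infinite contractions defining each coefficient of $\widehat{W}$ converge or that $\widehat{W}$ acts term by term. You name this obstacle yourself; naming it is not the same as removing it, and this is exactly the step that separates the conjecture from a theorem.

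Two further points. First, you do not address the $\hbar$-dependence of $g$: the translation piece carries $\hbar^{-1}$, and the paper explicitly requires $\widehat{U}$ to be $\hbar$-independent for the conjugation to yield the single-operator form \eqref{ananeq} (see the discussion around \eqref{aneq}). In the triple Hodge case this is fixed by Lemma~\ref{virtos}, which trades the translation for an $\hbar$-independent Virasoro element using the Virasoro constraints of $\tau_{KW}$; you would need to argue that this trick, composed with a general quantized $R$, still yields an $\hbar$-independent $\widehat{U}$. Second, for several ramification points the DOSS answer is a product of local pieces coupled by the Bergman kernel and is not a single-variable KP tau-function, so conjugating a single $\widehat{W}_{KW}$ is not even the right starting point; one needs a multi-component analogue of the whole Section~\ref{S3.3} machinery, which the paper does not develop. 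What you have written is a credible research plan, not a proof.
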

It is also unclear how to directely relate the cut-and-join description to the matrix integrals.

If two generating functions $Z$ and ${Z}^*$ are related to each other by 
\be\label{aneq}
{Z}^*= \widehat{U} \cdot Z,
\ee
where the operator $\widehat{U}$ does not depend on $\hbar$, then their cut-and-join operators are also related by conjugation.
Namely, if $Z$ satisfies \eqref{gene}, then
\be
{Z}^*= ``\exp\left({\sum_{k=1}^\infty \hbar^k \widehat{W}_{ka}^*}\right)" \cdot Z^{(0)*},
\ee
where $ \widehat{W}_k^* =\widehat{U}\,  \widehat{W}_k \, \widehat{U}^{-1}$ and $Z^{(0)*}=\widehat{U}\cdot Z^{(0)}$ are independent of $\hbar$. For the case \eqref{ananeq} we have
\be\label{conc}
{Z}^*= \exp\left({ \hbar \widehat{W}_{a}^*}\right) \cdot Z^{(0)*}.
\ee
For a more general case with $\widehat{U}$ dependent on $\hbar$ sometimes it is also possible to relate the cut-and-join descriptions.

In Sections \ref{S3} and \ref{H3} we consider the generating functions of the intersection numbers on the moduli spaces with a natural topological expansion. Namely, the KW and BGW tau-functions satisfy \eqref{homog} for $a=3$ and $a=1$ respectively. From the Virasoro constraints it follows that they are given by a cut-and-join description of the form \eqref{ananeq}, see \eqref{cajkdv} below. The generating functions of the triple Hodge integrals satisfying the Calabi--Yau condition satisfy the dimension constraints given by Proposition \ref{prop1}. For the corresponding Euler operator we do not have a description of the form \eqref{Weq1}. However, these tau-functions are related to the KW and BGW tau-functions by \eqref{aneq}, where $\widehat{U}$ are the elements of the Virasoro group, see \eqref{grel}. Using the conjugation, we derive the cut-and-join description \eqref{conc} for these generating functions.


\section{Cut-and-join operators for KW and BGW tau-functions}\label{S3}


\subsection{Intersection numbers}\label{S31}

Denote by $\overline {\mathcal M}_{g,n}$ the Deligne--Mumford compactification of the moduli space of all compact Riemann surfaces of genus~$g$ with~$n$ distinct marked points. It is a non-singular complex orbifold of dimension~$3g-3+n$. It is empty unless the stability condition
\begin{gather}\label{stability}
2g-2+n>0
\end{gather}
is satisfied.

For each marking index~$i$ consider the cotangent line bundle ${\mathbb{L}}_i \rightarrow \overline{\mathcal{M}}_{g,n}$, whose fiber over a point $[\Sigma,z_1,\ldots,z_n]\in \overline{\mathcal{M}}_{g,n}$ is the complex cotangent space $T_{z_i}^*\Sigma$ of $\Sigma$ at $z_i$. Let $\psi_i\in H^2(\overline{\mathcal{M}}_{g,n},\mathbb{Q})$ denote the first Chern class of ${\mathbb{L}}_i$. We consider the intersection numbers
\begin{gather}\label{eq:products}
\<\tau_{a_1} \tau_{a_2} \cdots \tau_{a_n}\>_g=\int_{\overline{\mathcal{M}}_{g,n}} \psi_1^{a_1} \psi_2^{a_2} \cdots \psi_n^{a_n}.
\end{gather}
The integral on the right hand side of~\eqref{eq:products} vanishes unless the stability condition~\eqref{stability} is obeyed, all  $a_i$ are non-negative integers, and the dimension constraint 
\be\label{d1}
3g-3+n=\sum_{i=1}^n a_i
\ee 
holds. Let $T_i$, $i\geq 0$, be formal variables and consider
\be
\tau_{KW}=\exp\left(\sum_{g=0}^\infty \sum_{n=0}^\infty \hbar^{2g-2+n}F_{g,n}\right),
\ee
where
\be
F_{g,n}=\sum_{a_1,\ldots,a_n\ge 0}\<\tau_{a_1}\tau_{a_2}\cdots\tau_{a_n}\>_g\frac{\prod T_{a_i}}{n!}.
\ee
Witten's conjecture \cite{Wit91}, proved by Kontsevich \cite{Kon92}, states that the partition function~$\tau_{KW}$ becomes a tau-function of the KdV hierarchy after the change of variables~$T_k=(2k+1)!!t_{2k+1}$. Integrability of the Kontsevich--Witten tau-function immediately follows from Kontsevich's matrix integral representation, for more details see e.g. \cite{H3_2} and references therein.

The Br\'ezin--Gross--Witten tau-function is another solution of the KdV hierarchy, which is closely related to the moduli spaces. Namely, this tau-function governs the intersection theory with the insertions of the fascinating Norbury's $\Theta$-classes. 
Norbury's $\Theta$-classes are the cohomology classes, $\Theta_{g,n}\in H^{4g-4+2n}(\overline{\mathcal{M}}_{g,n})$, which are also related to the super Riemann surfaces \cite{NorbS}. We refer the reader to \cite{Norb,NP1} for a detailed description. Let us consider the intersection numbers
\be
\<\tau_{a_1} \tau_{a_2} \cdots \tau_{a_n}\>_g^\Theta= \int_{\overline{\mathcal{M}}_{g,n}}\Theta_{g,n} \psi_1^{a_1} \psi_2^{a_2} \cdots \psi_n^{a_n}. 
\ee
\begin{remark}
For the $\Theta$-classes, higher degrees vanish because of the dimension constrains. Indeed,  $\dim \overline{\mathcal{M}}_{g,n}=3g-3+n$ and $\Theta_{g,n}\in H^{4g-4+2n}(\overline{\mathcal{M}}_{g,n})$, therefore $\Theta_{g,n}^2=0$. 
\end{remark}
Again, the integral on the right hand side vanishes unless the stability condition~\eqref{stability} is satisfied, all $a_i$ are non-negative integers, and the dimension constraint 
\be\label{d2}
g-1=\sum_{i=1}^n a_i
\ee 
holds. Consider the generating function of the 
intersection numbers of $\Theta$ and $\psi$ classes
\be
F^\Theta_{g,n}= \sum_{a_1,\ldots,a_n\ge 0} \<\tau_{a_1} \tau_{a_2} \cdots \tau_{a_n}\>_g^\Theta \frac{\prod T_{a_i}}{n!} 
\ee
then, we have a direct analog of the Kontsevich--Witten tau-function:
\begin{theorem*}[\cite{Norb}]
The generating function
\be
\tau_\Theta = \exp\left(\sum_{g=0}^\infty \sum_{n=0}^\infty \hbar^{2g-2+n}F_{g,n}^\Theta \right)
\ee
becomes a tau-function of the KdV hierarchy after the change of variables~$T_k=(2k+1)!!t_{2k+1}$.
\end{theorem*}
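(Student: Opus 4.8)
The plan is to reduce the theorem to two facts already invoked in the excerpt for the Br\'ezin--Gross--Witten (BGW) tau-function: that $\tau_{BGW}$ is a KdV tau-function, and that (after the rescaling $T_k=(2k+1)!!\,t_{2k+1}$) it is the unique normalized solution of a cut-and-join/Virasoro system of the type \eqref{ananeq}--\eqref{homog}. Concretely, I would verify that $\tau_\Theta$ satisfies exactly this system, and then conclude $\tau_\Theta=\tau_{BGW}$ by uniqueness (Propositions~\ref{Prop1} and~\ref{Prop2}), which yields KdV integrability.

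The first and essential step is geometric: to turn the $\Theta$-integrals into a closed recursion using the defining properties of Norbury's class. I would use three inputs. (i) The degree constraint $\Theta_{g,n}\in H^{4g-4+2n}(\overline{\mathcal M}_{g,n})$, equivalently the dimension relation \eqref{d2}; this forces $\Theta_{g,n}^{2}=0$ and makes every nonzero $\Theta$-integral top-degree in the remaining $\psi$-classes. (ii) The forgetful-map compatibility $\Theta_{g,n+1}=\psi_{n+1}\cdot\pi^{*}\Theta_{g,n}$ for $\pi\colon\overline{\mathcal M}_{g,n+1}\to\overline{\mathcal M}_{g,n}$. (iii) The restrictions of $\Theta_{g,n}$ to boundary divisors: it splits as $\Theta_{g_1,n_1+1}\boxtimes\Theta_{g_2,n_2+1}$ across a separating node and as $\Theta_{g-1,n+2}$ across the non-separating one. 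Substituting (ii) into the standard comparison $\psi_i=\pi^{*}\psi_i+(\text{boundary})$ on $\overline{\mathcal M}_{g,n+1}$ produces string- and dilaton-type relations for the $\Theta$-numbers, and the splittings (iii) promote these to the full Virasoro tower exactly as in the Dijkgraaf--Verlinde--Verlinde argument for the $\psi$-class case; the crucial difference is that the extra factor $\psi_{n+1}$ in (ii) (whose pushforward is the dilaton-type class $\kappa_0=2g-2+n$) replaces the string equation by a dilaton-type relation, shifting the index range and the constant term of the $L_0$-equation, so that one lands on the BGW normalization rather than the Kontsevich--Witten one.

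The second step is bookkeeping. The dimension constraint \eqref{d2} says $\sum a_i=g-1$, so each $F^\Theta_{g,n}$ is homogeneous of degree $2g-2+n$ in the variables $t_{2k+1}$, whence $\widehat{L}_0\cdot\tau_\Theta=\hbar\partial_\hbar\cdot\tau_\Theta$, i.e.\ \eqref{homog} holds with $a=1$, as for BGW. Translating the string/dilaton/Virasoro relations into the $t$-variables via $T_k=(2k+1)!!\,t_{2k+1}$ (the double factorials being precisely what casts the Virasoro generators into the canonical form \eqref{virw}) then shows that $\tau_\Theta$ solves an equation of the form \eqref{Weq}, hence also \eqref{gene} with $a=1$; in particular its $\hbar^0$-term is a constant, which we normalize to $1$ as for $\tau_{BGW}$. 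By Propositions~\ref{Prop1} and~\ref{Prop2} the normalized solution is unique, so $\tau_\Theta=\tau_{BGW}$, and the latter is a KdV tau-function.

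The main obstacle lies entirely in the first step: carefully deriving the recursion from (i)--(iii) — in particular handling the non-separating boundary term, which contributes the $\partial^{2}$-piece of the Virasoro operators — and checking that the $\psi_{n+1}$-shift in (ii) produces the BGW algebra with the correct constant (the $1/8$ and the shift of $t_1$) rather than the KW one. A variant that avoids this Virasoro bookkeeping is to use the same properties (i)--(iii) to verify that the $\Theta$-correlators obey the Chekhov--Eynard--Orantin topological recursion on the Bessel curve \cite{DN,CN}, and then quote the known identification of that recursion's partition function with $\tau_{BGW}$ and hence with the KdV hierarchy; there the technical burden is instead matching the recursion kernel with the boundary and forgetful-map relations.
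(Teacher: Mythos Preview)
This theorem is not proved in the paper: it is stated with attribution to Norbury \cite{Norb} and used as input (see Section~\ref{S31}), so there is no proof in the paper against which to compare your proposal. The paper's logical flow runs in the opposite direction to yours: it takes the identification $\tau_\Theta=\tau_{BGW}$ and the BGW Virasoro constraints \eqref{VirC} from \cite{Norb,GN,MMS} as given, and from these derives the cut-and-join description \eqref{cajkdv} and the general uniqueness statements of Propositions~\ref{Prop1}--\ref{Prop2}. Your plan instead aims to \emph{establish} the theorem by deriving the Virasoro constraints for $\tau_\Theta$ directly from the geometry of $\Theta$-classes and then invoking uniqueness; that is a legitimate strategy and is essentially the shape of Norbury's own argument, but it is not something this paper attempts.

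On the content of your sketch: the outline is sound, and you correctly locate the entire difficulty in the first (geometric) step. The properties (i)--(iii) you list are exactly the axioms Norbury isolates for $\Theta_{g,n}$, and the mechanism you describe---the extra $\psi_{n+1}$ in the pullback relation killing the string equation and leaving a dilaton-type $L_0$ constraint with the $1/16$ (equivalently the $t_1/8$ in $\widehat W_0$)---is the right heuristic for why one lands on the BGW rather than the KW Virasoro algebra. What your proposal does not supply is the actual derivation of the full tower $\widehat L_k^{0}\cdot\tau_\Theta=0$ for all $k\ge 0$ from (i)--(iii); this requires a careful boundary analysis (the non-separating stratum in particular) and is the substance of \cite{Norb}. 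Once that is done, the appeal to Propositions~\ref{Prop1}--\ref{Prop2} is unnecessary: the Virasoro constraints together with the KdV reduction already determine $\tau_{BGW}$ uniquely, which is how the identification is made in \cite{Norb}. Your alternative route via the Bessel-curve topological recursion \cite{DN,CN} is also viable and is in fact closer to how the result is packaged in \cite{CN}.
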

Norbury also proved, that $\tau_\Theta$ is nothing but a tau-function of the BGW model, 
\be
\tau_\Theta=\tau_{BGW}.
\ee
KdV integrability of the BGW model follows from the relation to the generalized Kontsevich
model and was established by Mironov, Morozov and Semenoff \cite{MMS}.


\subsection{Heisenberg--Virasoro constraints}\label{SHV}

It appears that the KW and BGW tau-functions share a lot of common properties and it is natural to consider them simultaneously.
Let us denote
\be
\tau_1={\tau}_{KW},\quad \quad \tau_0={\tau}_{BGW}.
\ee
The dimension constrains \eqref{d1}, \eqref{d2} can be represented as
\begin{equation}
\begin{split}\label{dimV}
 (1+2\alpha) \hbar \frac{\p}{\p \hbar}  {\tau}_\alpha=\widehat{L}_0 \cdot {\tau}_\alpha , \,\,\,\,\,\,\,\,\,\alpha=0,1.
\end{split}
\end{equation}
Below we always assume that the index $\alpha$ runs from $0$ to $1$,
\be
\alpha\in \{0,1\}.
\ee
\begin{remark}
Tau-functions $ {\tau}_\alpha$ are KdV tau-functions; however, it is convenient to consider them in the context of the KP hierarchy. So, below we  use the 
Heisenberg--Virasoro algebra of KP symmetries, introduced in Section \ref{S1}.
\end{remark}

Both $\tau_\alpha$ are KdV tau-functions, hence satisfy the {\em Heisenberg constraints} (or the KdV reduction constraints)
\be\label{Heis}
\frac{\p}{\p t_{2k}} {\tau}_\alpha=0, \,\,\,\,\,\,\,\,\,\,\,\, k>0.
\ee
Moreover, they satisfy the {\em Virasoro constraints}  \cite{Fuku,DVV,GN} 
\begin{equation}
\begin{split}\label{VirC}
\widehat{L}_k^{\alpha} \cdot {\tau}_{\alpha} &=0, \,\,\,\, k \geq -\alpha, \\
\end{split}
\end{equation}
where the Virasoro operators are given by
\begin{equation}
\begin{split}
\widehat{L}_k^{\alpha}&=\frac{1}{2}\widehat{L}_{2k}-\frac{1}{2\hbar}\frac{\p}{\p t_{2k+1+2\alpha}} +\frac{\delta_{k,0}}{16} \in {\mathcal L}.
\end{split}
\end{equation}
These operators satisfy the commutation relations
\be
\left[\widehat{L}_k^{\alpha},\widehat{L}_k^{\alpha}\right]=(k-m)\widehat{L}_{k+m}^{\alpha}, \quad \quad k,m \geq -\alpha.
\ee
Combining (\ref{Heis}) with (\ref{VirC}) one gets
\be
\widehat{L}_0 \cdot \tau_\alpha =(1+2\alpha )\hbar \widehat{W}_\alpha \cdot \tau_\alpha,
\ee
where
\be\label{leadcaj}
\widehat{W}_\alpha=\frac{1}{2\alpha+1} \sum_{k=0}^\infty (2k+1) t_{2k+1}\left(\widehat{L}_{2k-2\alpha}+\frac{\delta_{k,\alpha}}{8}\right).
\ee
Therefore, from (\ref{dimV}) we have
\be
\hbar \frac{\p}{\p \hbar} \tau_\alpha=\hbar \widehat{W}_\alpha \cdot \tau_\alpha.
\ee

This is the relation of the form (\ref{gene}), it leads to the cut-and-join description for the KW and BGW tau-functions \cite{KSS,KS2}
\be\label{cajkdv}
\tau_{\alpha}=\exp\left({\hbar \widehat{W}_{\alpha}}\right)\cdot 1.
\ee
\begin{remark}\label{remp}
There is a certain arbitrariness in the construction of the cut-and-join operators.
Since the tau-functions $\tau_\alpha$ depend only on odd times, it is possible to consider operators $\widehat{W}_\alpha$ with omitted even times $t_{2k}$ and corresponding derivatives,
\begin{equation}\label{CAJ}
\begin{split}
\widehat{W}_0^*&=\sum_{k,m\in {\mathbb Z}_{\odd}^+}^\infty \left(kmt_{k}t_{m}\frac{\p}{\p t_{k+m-1}}+\frac{1}{2}(k+m+1)t_{k+m+1}\frac{\p^2}{\p t_k \p t_m}\right)+\frac{t_1}{8},\\
\widehat{W}_1^*&=\frac{1}{3}\sum_{k,m\in {\mathbb Z}_{\odd}^+}^\infty \left(kmt_{k}t_{m}\frac{\p}{\p t_{k+m-3}}+\frac{1}{2}(k+m+3)t_{k+m+3}\frac{\p^2}{\p t_k \p t_m}\right)+\frac{t_1^3}{3!}+\frac{t_3}{8}.
\end{split}
\end{equation}
Then an equivalent cut-and-join description is given by the these operators,
\be
\tau_{\alpha}=\exp\left({\hbar \widehat{W}_{\alpha}^*}\right)\cdot 1.
\ee
\end{remark}

If we consider the topological expansion of the tau-functions $\tau_\alpha=\sum_{k=0}^\infty \tau_\alpha^{(k)}\hbar^k$, then we have the algebraic topological recursion
\begin{equation}
\begin{split}
\tau_\alpha^{(k)}&=\frac{1}{k} \widehat{W}_\alpha\cdot \tau_\alpha^{(k-1)}\\
&=\frac{1}{k} \widehat{W}_\alpha^*\cdot \tau_\alpha^{(k-1)}
\end{split}
\end{equation}
with the initial condition $\tau_\alpha^{(0)}=1$.
Moreover, we can separate the contributions with different numbers of marked points. Namely, let $\tau_\alpha^{(k)}=\sum_{n=1}^{(2\alpha+1)(k-1)} \tau_{\alpha,n}^{(k)}$, where $\tau_{\alpha,n}^{(k)}$ is a homogeneous polynomial in ${\bf t}$ of degree $n$. Let also
\begin{equation}
\begin{split}
\widehat{W}_{0,1}^*&=\sum_{k,m\in {\mathbb Z}_{\odd}^+}^\infty kmt_{k}t_{m}\frac{\p}{\p t_{k+m-1}}+\frac{t_1}{8},\\
\widehat{W}_{0,-1}^*&=\frac{1}{2}\sum_{k,m\in {\mathbb Z}_{\odd}^+}^\infty (k+m+1)t_{k+m+1}\frac{\p^2}{\p t_k \p t_m},\\
\widehat{W}_{1,1}^*&=\frac{1}{3}\sum_{k,m\in {\mathbb Z}_{\odd}^+}^\infty kmt_{k}t_{m}\frac{\p}{\p t_{k+m-3}}+\frac{t_3}{8},\\
\widehat{W}_{1,-1}^*&=\frac{1}{6}\sum_{k,m\in {\mathbb Z}_{\odd}^+}^\infty (k+m+3)t_{k+m+3}\frac{\p^2}{\p t_k \p t_m},
\end{split}
\end{equation}
and $\widehat{W}_{0,3}^*=0$, $\widehat{W}_{1,3}^*=\frac{t_1^3}{3!}$. Then we have the recursion
\be\label{fixn}
\tau_{\alpha,n}^{(k)}=\frac{1}{k}\left( \widehat{W}^*_{\alpha,3}\cdot\tau_{\alpha,n-3}^{(k-1)}+\widehat{W}^*_{\alpha,1}\cdot\tau_{\alpha,n-1}^{(k-1)}+\widehat{W}^*_{\alpha,-1}\cdot\tau_{\alpha,n+1}^{(k-1)}\right)
\ee
with the initial condition $\tau_{\alpha,n}^{(0)}=\delta_{n,0}$.

The cut-and-join description allows us to find the topological expansion of the tau-functions
\begin{multline}
\log \tau_0={\frac {t_{{1}}}{8}}\hbar +{\frac {{t_{{1}}}^{2}}{16}}{\hbar }^{2}+
 \left( {\frac {{t_{{1}}}^{3}}{24}}+{\frac {9\,t_{{3}}}{128}} \right) 
{\hbar }^{3}+ \left( {\frac {{t_{{1}}}^{4}}{32}}+{\frac {27\,t_{{3}}t_
{{1}}}{128}} \right) {\hbar }^{4}+ \left( {\frac {{t_{{1}}}^{5}}{40}}+
{\frac {27\,t_{{3}}{t_{{1}}}^{2}}{64}}+{\frac {225\,t_{{5}}}{1024}}
 \right) {\hbar }^{5}\\
 + \left( {\frac {{t_{{1}}}^{6}}{48}}+{\frac {45\,
t_{{3}}{t_{{1}}}^{3}}{64}}+{\frac {1125\,t_{{5}}t_{{1}}}{1024}}+{
\frac {567\,{t_{{3}}}^{2}}{1024}} \right) {\hbar }^{6}+O(\hbar^7),
\end{multline}
\begin{multline}
\log \tau_1=\left( {\frac {{t_{{1}}}^{3}}{6}}+{\frac {t_{{3}}}{8}} \right) 
\hbar + \left( {\frac {t_{{3}}{t_{{1}}}^{3}}{2}}+{\frac {5\,t_{{5}}t_{
{1}}}{8}}+{\frac {3\,{t_{{3}}}^{2}}{16}} \right) {\hbar }^{2}\\
+ \left( 
{\frac {15\,t_{{1}}t_{{3}}t_{{5}}}{4}}+{\frac {3\,{t_{{3}}}^{2}{t_{{1}
}}^{3}}{2}}+{\frac {5\,t_{{5}}{t_{{1}}}^{4}}{8}}+{\frac {35\,t_{{7}}{t
_{{1}}}^{2}}{16}}+{\frac {3\,{t_{{3}}}^{3}}{8}}+{\frac {105\,t_{{9}}}{
128}} \right) {\hbar }^{3}+O(\hbar^4).
\end{multline}


\subsection{Heisenberg--Virasoro group action and cut-and-join operators}\label{S3.3}
In this section, we investigate how the subgroup ${\mathcal V}_+$ of the Heisenberg--Virasoro group, see Section \ref{S1}, acts on the KdV tau-functions $\tau_\alpha$, $\alpha \in \{0,1\}$. In general, the action of this subgroup is not well-defined and can lead to divergences. In this section, we do not address this potential issue, assuming that all considered functions are well defined.

Let us consider the subgroups of the Heisenberg--Virasoro group, generated by the operators of the form 
\be\label{opG}
{\mathcal V}_+\ni \widehat{G}_\alpha:=\exp\left(\sum_{k=1}^\infty \tilde{a}_k  \widehat{L}_k\right) \exp\left(\hbar^{-1} \sum_{k=2+2\alpha}^\infty {v}_k \widehat{J}_k \right).
\ee
\begin{remark}
One can easily relax the constraint on the range of summation and consider more general operators. However, this makes the formulas less compact, so we prefer to deal with a less general case, which is sufficient for our needs.
\end{remark}

We consider the functions $\widehat{G}_\alpha \cdot \tau_\alpha$.
The tau-functions $\tau_\alpha$ satisfy the KdV reduction constraints, $\widehat{J}_{2k} \cdot \tau_\alpha=0$ for $k>0$, therefore we may assume that ${v}_{2k}=0$. 
Using the Virasoro constrains (\ref{VirC}) we can rewrite action of the operator \eqref{opG} on the tau-function $\tau_\alpha$ in terms of the Virasoro group only. For this purpose we introduce a bijection between the space of the Virasoro operators 
\be\label{Virg}
\widehat{V}_{\bf a}=\exp\left({\sum_{k>0} a_k \widehat{L}_k }\right)
\ee
 and the space of all formal series of the form $f(z)\in z+z{\mathbb C}[\![z]\!]$. Recall that the operators ${\mathtt l}_k$ are given by \eqref{virw}. 
\begin{definition}\label{def1}
For the Virasoro group element $\widehat{V}$ consider
\be
\Xi(\widehat{V})=e^{\sum_{k>0} a_k {\mathtt l}_k } \,z \, e^{-\sum_{k>0} a_k {\mathtt l}_k } \in z+z{\mathbb C}[\![z]\!].
\ee
For a given set of parameters $a_k\in {\mathbb C}$, $k\in {\mathbb Z}_{>0}$ we denote such a  series  associated to the operator ${\widehat{V}}_{\bf a}$ by $f(z;{\bf a})=\Xi(\widehat{V}_{\bf a})$.
We also introduce the inverse formal series $h(z;{\bf a})$,
\be
f(h(z;{\bf a});{\bf a})=z.
\ee
\end{definition}
For any series $f(z)\in z+z{\mathbb C}[\![z]\!]$ we construct a corresponding element of the Virasoro group 
\be
\widehat{V}=\Xi^{-1}(f(z)),
\ee
where $a_k$ are defined implicitly by
\be
f(z)=e^{\sum_{k>0} a_k {\mathtt l}_k } \,z \, e^{-\sum_{k>0} a_k {\mathtt l}_k } .
\ee
For the inverse group element we have
\be
\widehat{V}^{-1}=\Xi^{-1}(h(z)).
\ee
Multiplication of the Virasoro group elements corresponds to the composition of the associated functions
\be
\Xi(\widehat{V}_1\widehat{V}_2)=f_2(f_1(z)).
\ee

To any sequence of coefficients $a_k\in {\mathbb C}$, $k\in {\mathbb Z}_{>0}$, we also associate the following formal series
\be\label{vfunc}
{v}^\alpha(z)=\frac{1}{(2\alpha+1)}  (z^{2\alpha+1}-f(z;{\bf {a}})^{2\alpha+1}) \in z^{2+2\alpha}{\mathbb C}[\![z]\!]
\ee
with the coefficients
\be
v_k^{\alpha}({\bf{a}})= [z^k] {v}^\alpha(z),
\ee
where for any formal Laurent series $F(z)=\sum b_k z^k$ we introduce $[z^k]F(z):=b_k$.
\begin{lemma}\label{virtos}
We have
\be
\exp\left({\hbar^{-1} \sum_{k=\alpha+1} v_{2k+1}^{\alpha}({\bf{a}}) \frac{\p}{\p t_{2k+1}}}\right) \cdot \tau_\alpha=\exp\left({\sum_{k>0} a_{2k} \widehat{L}_{2k} }\right) \cdot \tau_\alpha.
\ee
\end{lemma}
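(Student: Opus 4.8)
The plan is to deduce the identity from the Virasoro constraints \eqref{VirC}, after rewriting the twisted operators $\widehat{L}_k^\alpha$ as Heisenberg conjugates of the plain Virasoro operators $\widehat{L}_{2k}$, and then transporting the resulting relation through $\tau_\alpha$. Both sides of the asserted equality are actions of elements of $\mathcal{V}$ on $\tau_\alpha$, so (working, as in Section \ref{S3.3}, under the assumption that these actions are well defined) it suffices to compare the two group elements after they hit $\tau_\alpha$.

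First I would record the key terminating conjugation. Put $c=-\tfrac{1}{\hbar(2\alpha+1)}$. Since $[\widehat{J}_{2\alpha+1},\widehat{L}_{2k}]=(2\alpha+1)\widehat{J}_{2k+2\alpha+1}$ by \eqref{comre} and $[\widehat{J}_{2\alpha+1},\widehat{J}_{2k+2\alpha+1}]=0$, the adjoint series for $e^{c\widehat{J}_{2\alpha+1}}\widehat{L}_{2k}e^{-c\widehat{J}_{2\alpha+1}}$ stops after the linear term:
\[
e^{c\widehat{J}_{2\alpha+1}}\,\widehat{L}_{2k}\,e^{-c\widehat{J}_{2\alpha+1}}=\widehat{L}_{2k}-\tfrac{1}{\hbar}\widehat{J}_{2k+2\alpha+1}=2\,\widehat{L}_k^\alpha,\qquad k>0 .
\]
Conjugating term by term with the \emph{same} operator,
\[
\exp\Bigl(\textstyle\sum_{k>0}2a_{2k}\,\widehat{L}_k^\alpha\Bigr)=e^{c\widehat{J}_{2\alpha+1}}\,\widehat{V}_{\bf a}\,e^{-c\widehat{J}_{2\alpha+1}},\qquad \widehat{V}_{\bf a}:=\exp\Bigl(\textstyle\sum_{k>0}a_{2k}\,\widehat{L}_{2k}\Bigr),
\]
and by Definition \ref{def1} this $\widehat{V}_{\bf a}$ is exactly the Virasoro group element with $\Xi(\widehat{V}_{\bf a})=f(z;{\bf a})$ (only the even parameters are switched on).

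Next I would feed $\tau_\alpha$ into this. By \eqref{VirC}, $\widehat{L}_k^\alpha\cdot\tau_\alpha=0$ for every $k>0$, so $X:=\sum_{k>0}2a_{2k}\widehat{L}_k^\alpha$ kills $\tau_\alpha$, hence $X^n\cdot\tau_\alpha=0$ for $n\ge1$ and $e^{X}\cdot\tau_\alpha=\tau_\alpha$. Combined with the previous display this says $\widehat{V}_{\bf a}\cdot\bigl(e^{-c\widehat{J}_{2\alpha+1}}\cdot\tau_\alpha\bigr)=e^{-c\widehat{J}_{2\alpha+1}}\cdot\tau_\alpha$, i.e.\ $e^{-c\widehat{J}_{2\alpha+1}}\cdot\tau_\alpha$ is $\widehat{V}_{\bf a}$-invariant. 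Writing $\tau_\alpha=e^{c\widehat{J}_{2\alpha+1}}\cdot\bigl(e^{-c\widehat{J}_{2\alpha+1}}\cdot\tau_\alpha\bigr)$ and inserting $\widehat{V}_{\bf a}^{-1}\widehat{V}_{\bf a}$ gives
\[
\widehat{V}_{\bf a}\cdot\tau_\alpha=\bigl(\widehat{V}_{\bf a}\,e^{c\widehat{J}_{2\alpha+1}}\,\widehat{V}_{\bf a}^{-1}\bigr)\cdot e^{-c\widehat{J}_{2\alpha+1}}\cdot\tau_\alpha .
\]
Now the $\Xi$-dictionary enters: conjugating a Heisenberg current by a Virasoro group element produces again a combination of currents with no central correction (the bracket $[\widehat{L}_\bullet,\widehat{J}_\bullet]$ lies in the Heisenberg part), and conjugation is multiplicative on the spectral variable, so $\widehat{V}_{\bf a}\,\widehat{J}_{2\alpha+1}\,\widehat{V}_{\bf a}^{-1}$ corresponds to multiplication by $\bigl(\widehat{V}_{\bf a}z\widehat{V}_{\bf a}^{-1}\bigr)^{2\alpha+1}=f(z;{\bf a})^{2\alpha+1}$. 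Since the multiplication operators $f^{2\alpha+1}$ and $z^{2\alpha+1}$ commute, the right-hand side above corresponds to $\exp\!\bigl(c\,(f(z;{\bf a})^{2\alpha+1}-z^{2\alpha+1})\bigr)$ acting on $\tau_\alpha$, and $c\,(f^{2\alpha+1}-z^{2\alpha+1})=\tfrac{1}{\hbar(2\alpha+1)}(z^{2\alpha+1}-f^{2\alpha+1})=\hbar^{-1}v^\alpha(z)$. Reading off the coefficients of $v^\alpha(z)$ and translating multiplication by $z^{2k+1}$ back into $\partial/\partial t_{2k+1}$ produces exactly the left-hand side of the lemma, with the sum starting at $k=\alpha+1$ since $v^\alpha(z)\in z^{2+2\alpha}\mathbb{C}[\![z]\!]$.

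The hard part will be this last step: justifying the ``conjugation of a current is composition with $f$'' formula inside the appropriate completion of $\gl(\infty)$, verifying that no constant/cocycle term is generated, and pinning down the dictionary between multiplication by $z^{2k+1}$ and the vector field $\partial/\partial t_{2k+1}$ under the KdV reduction — here one uses that $f(z;{\bf a})$ is an \emph{odd} series once only the even Virasoro parameters $a_{2k}$ are turned on, so only odd powers of $z$ survive, consistently with $v^\alpha(z)\in z^{2+2\alpha}\mathbb{C}[\![z]\!]$ and with the stated summation range. Everything else reduces to the elementary terminating-conjugation identity and to the commutativity of positive-index Heisenberg operators.
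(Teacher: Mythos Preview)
Your argument is correct and supplies a self-contained proof, whereas the paper simply invokes Lemma~1.1 of \cite{A1}. The route you take is essentially the one behind that cited lemma: rewrite the positive Virasoro constraints $\widehat L_k^\alpha$ (for $k>0$) as a Heisenberg-conjugate of $\tfrac12\widehat L_{2k}$, exponentiate to get $e^{c\widehat J_{2\alpha+1}}\widehat V_{\bf a}e^{-c\widehat J_{2\alpha+1}}\cdot\tau_\alpha=\tau_\alpha$, and then pull $\widehat V_{\bf a}$ back through using the conjugation formula for $\widehat V_{\bf a}\,\widehat J_{2\alpha+1}\,\widehat V_{\bf a}^{-1}$.

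Two small remarks. First, your key conjugation step ``$\widehat V_{\bf a}\widehat J_{2\alpha+1}\widehat V_{\bf a}^{-1}$ corresponds to multiplication by $f(z;{\bf a})^{2\alpha+1}$'' is exactly the first formula of Lemma~\ref{lemma32}; you can cite it directly rather than leaving it as a heuristic to be justified, and it produces no central correction since only positive-index currents appear. Second, the oddness of $f(z;{\bf a})$ when only the $a_{2k}$ are switched on follows from the parity-invariance of the even Witt generators $\mathtt l_{2k}$ under $z\mapsto -z$; this disposes of the ``hard part'' you flag about even currents (though the KdV reduction $\widehat J_{2k}\cdot\tau_\alpha=0$ would absorb them anyway).
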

\begin{proof}
The statement of the lemma follows from Lemma 1.1 of \cite{A1}.
\end{proof}
From this lemma it follows that
\begin{equation}\label{VtoJ}
\begin{split}
\widehat{G}_\alpha \cdot \tau_\alpha 
&=\exp\left(\sum_{k=1}^\infty \tilde{a}_k  \widehat{L}_k\right) \exp\left(\hbar^{-1} \sum_{k=\alpha+1}^\infty {v}_{2k+1} \widehat{J}_{2k+1} \right) \cdot \tau_\alpha \\
&= \exp\left(\sum_{k=1}^\infty \tilde{a}_k  \widehat{L}_k\right)  \exp\left(\sum_{k=1}^\infty {a}_{2k}  \widehat{L}_{2k}\right) \cdot \tau_\alpha,
\end{split}
\end{equation}
where the coefficients ${a}_k$ are given implicitly by the relation \eqref{vfunc} with ${v}_{2k+1}=v_{2k+1}^{\alpha}({\bf{a}})$. Therefore, we always can reduce the action of the operator \eqref{opG} to the action of equivalent operator \eqref{Virg} from the Virasoro group.

Let us consider arbitrary operator $\widehat{V}_{\bf a}$ of the form \eqref{Virg} and its action on the tau-function
\be\label{relV} 
\tau_\alpha^{\bf a}=\widehat{V}_{\bf a} \cdot \tau_\alpha.
\ee
for some coefficients $a_k$. $\tau_\alpha^{\bf a}$ are tau-functions of the KP hierarchy. Let us derive the Heisenberg--Virasoro constraints and the cut-and-join description for the tau-functions $\tau_\alpha^{\bf a}$.

We introduce
\begin{equation}\label{LJa}
\begin{split}
{\widehat{J}}_{k}^{{\bf a}}=\widehat{V}_{\bf a}  \widehat{J}_{k} \widehat{V}_{\bf a}^{-1}, \quad \quad
{\widehat{L}}_{k}^{{\bf a}}=\widehat{V}_{\bf a}   \widehat{L}_{k} \widehat{V}_{\bf a}^{-1}.
\end{split}
\end{equation}
From the commutation relations of the Heisenberg--Virasoro algebra ${\mathcal L}$ it follows that
these operators are given by the linear combinations of $\widehat{J}_{m}$ and $ \widehat{L}_{m}$ operators.
Let us also introduce the coefficients
\be\label{sigmar}
\rho[k,m]= [z^m] f(z;{\bf{a}})^k,\quad \quad  \quad \sigma[k,m]=  [z^{m+1}]\frac{f(z;{\bf{a}})^{k+1}}{f'(z;{\bf{a}})},
\ee
where $f(z,{\bf a})=\Xi(\widehat{V}_{\bf a})$. They are polynomials in ${\bf a}$.

From the relation between the operators $ \widehat{J}_{k}$, $ \widehat{L}_{k}$, ${\mathtt l}_k$, ${\mathtt j}_k$ and Definition \ref{def1} we have
\begin{lemma}\label{lemma32}
\begin{equation}\label{rrr}
 \begin{aligned}
 \widehat{J}_{k}^{{\bf a}}&= \sum_{m=k}^\infty \rho[k,m] \widehat{J}_{m},                                                     & &k\in {\mathbb Z},\\
    \widehat{L}_{k}^{{\bf a}}&= \sum_{m=k}^\infty \sigma[k,m] \widehat{L}_{m}+\frac{a_2}{2}\delta_{k, -2},   \quad \quad & &k\geq -2.
\end{aligned}
\end{equation}
\end{lemma}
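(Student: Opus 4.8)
The plan is to reduce the conjugation in \eqref{LJa}, which lives in the centrally extended algebra $\mathcal{L}$, to the analogous conjugation inside the quotient Witt--Heisenberg algebra spanned by the operators ${\mathtt j}_m$, ${\mathtt l}_m$ of \eqref{virw} --- where it becomes an explicit substitution of formal series controlled by the function $f(z;{\bf a})$ of Definition \ref{def1} --- and then to lift the answer back to $\mathcal{L}$ by accounting for the central (scalar) cocycle.

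First I would introduce the quotient map $\pi\colon\mathcal{L}\to\overline{\mathcal{L}}$ by the center, $\pi(\widehat{L}_k)={\mathtt l}_k$, $\pi(\widehat{J}_k)={\mathtt j}_k$, $\pi(1)=0$; it is a Lie algebra homomorphism since, as recorded after \eqref{virw}, the ${\mathtt j}_m$ and ${\mathtt l}_m$ obey \eqref{comre} with the central term dropped. Writing $X=\sum_{j>0}a_j\widehat{L}_j$, so that $\widehat{V}_{\bf a}=e^X$, the operators \eqref{LJa} are $\widehat{J}_k^{\bf a}=e^{\mathrm{ad}_X}\widehat{J}_k$ and $\widehat{L}_k^{\bf a}=e^{\mathrm{ad}_X}\widehat{L}_k$; since $\mathrm{ad}_{\widehat{L}_j}$ and $\mathrm{ad}_{\widehat{J}_j}$ strictly raise the index for $j>0$, these expansions are well defined in the appropriate completion. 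Applying $\pi$ yields $\pi(\widehat{J}_k^{\bf a})=V{\mathtt j}_kV^{-1}$ and $\pi(\widehat{L}_k^{\bf a})=V{\mathtt l}_kV^{-1}$ with $V=e^{\sum_{j>0}a_j{\mathtt l}_j}$. Since ${\mathtt j}_k=z^k=({\mathtt j}_1)^k$ is a multiplication operator and conjugation by $V$ is a ring automorphism, $V{\mathtt j}_kV^{-1}=(V{\mathtt j}_1V^{-1})^k=f(z;{\bf a})^k=\sum_m\rho[k,m]{\mathtt j}_m$ by Definition \ref{def1} and \eqref{sigmar}; as $\widehat{V}_{\bf a}$ involves only $\widehat{L}_j$ with $j>0$, one has $f(z;{\bf a})\in z+z^2{\mathbb C}[\![z]\!]$, so $\rho[k,m]=0$ for $m<k$ and $\rho[k,k]=1$.

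For the Virasoro generators, $\mathrm{ad}_{\sum_{j>0}a_j{\mathtt l}_j}$ keeps ${\mathtt l}_k$ in the completed span of $\{{\mathtt l}_m:m\ge k\}$, hence $V{\mathtt l}_kV^{-1}=\sum_{m\ge k}\widetilde\sigma[k,m]\,{\mathtt l}_m$ with $\widetilde\sigma[k,k]=1$ for coefficients still to be determined. I would pin them down using the identity $[{\mathtt l}_m,g(z)]=-z^{m+1}g'(z)$, valid for any series $g$, which follows immediately from \eqref{virw}: conjugating $[{\mathtt l}_k,{\mathtt j}_1]=-{\mathtt j}_{k+1}$ by $V$ gives $\bigl[\sum_m\widetilde\sigma[k,m]\,{\mathtt l}_m,\ f(z;{\bf a})\bigr]=-\bigl(\sum_m\widetilde\sigma[k,m]\,z^{m+1}\bigr)f'(z;{\bf a})=-f(z;{\bf a})^{k+1}$, so $\sum_m\widetilde\sigma[k,m]\,z^{m+1}=f(z;{\bf a})^{k+1}/f'(z;{\bf a})$ and therefore $\widetilde\sigma[k,m]=[z^{m+1}]\bigl(f^{k+1}/f'\bigr)=\sigma[k,m]$ as in \eqref{sigmar}. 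For $k\ge-2$ the series $f^{k+1}/f'$ lies in $z^{k+1}+z^{k+2}{\mathbb C}[\![z]\!]$, so $\sigma[k,m]=0$ for $m<k$ and $\sigma[k,k]=1$.

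It remains to lift from $\overline{\mathcal{L}}$ back to $\mathcal{L}$. Since $\ker\pi={\mathbb C}\cdot 1$ and the bracket $[\widehat{J}_k,\widehat{J}_m]$ --- the only source of a central term for the $\widehat{J}$'s --- never enters $e^{\mathrm{ad}_X}\widehat{J}_k$, the first line of \eqref{rrr} follows: $\widehat{J}_k^{\bf a}=\sum_{m\ge k}\rho[k,m]\widehat{J}_m$. For the Virasoro part one gets $\widehat{L}_k^{\bf a}=\sum_{m\ge k}\sigma[k,m]\widehat{L}_m+c_k$ with $c_k\in{\mathbb C}$ the scalar part of $e^{\mathrm{ad}_X}\widehat{L}_k$. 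Using $[\widehat{L}_j,\widehat{L}_k]=(j-k)\widehat{L}_{j+k}+\frac{1}{12}\delta_{j,-k}(j^3-j)$, the cocycle $\delta_{j,-k}$ can fire only when $-k\in{\mathbb Z}_{>0}$; for $k=-1$ it contributes $1^3-1=0$, and for $k=-2$ it contributes only at first order, $a_2\cdot\frac{1}{12}(2^3-2)=\frac{a_2}{2}$, after which $\mathrm{ad}_X$ sends every index to $\ge-1$ and no further scalar can appear. Hence $c_k=\frac{a_2}{2}\delta_{k,-2}$ for $k\ge-2$, which is the second line of \eqref{rrr}. I expect the only real work to be this last bookkeeping --- justifying that lifting from $\overline{\mathcal{L}}$ to $\mathcal{L}$ contributes nothing beyond a scalar, and that over the whole $\mathrm{ad}_X$-expansion the central cocycle is met exactly once, and only for $k=-2$, inside the stated range $k\ge-2$ --- together with the index-range check for $\sigma[k,m]$; neither is deep, but both need care.
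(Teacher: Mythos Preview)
Your proof is correct and follows the same approach the paper indicates: the paper's proof consists entirely of the sentence ``From the relation between the operators $\widehat{J}_k$, $\widehat{L}_k$, ${\mathtt l}_k$, ${\mathtt j}_k$ and Definition \ref{def1} we have,'' and what you have written is precisely a careful unpacking of that relation --- passing to the centerless algebra, identifying the conjugation there with the substitution $z\mapsto f(z;{\bf a})$, and then tracking the Virasoro cocycle when lifting back. Your determination of the $\sigma[k,m]$ via the commutator with ${\mathtt j}_1$ and your bookkeeping of the single scalar contribution at $k=-2$ are both clean and correct.
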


Consider the operators
\begin{equation}
\begin{split}
\widehat{J}_k^{{\bf a}}&=\frac{1}{2} \widehat{J}_{2k}^{{\bf a}},\\
\widehat{L}_k^{\alpha,{\bf a}}&=\frac{1}{2}\widehat{L}_{2k}^{\bf a}-\frac{1}{2\hbar} \widehat{J}_{2k+1+2\alpha}^{{\bf a}} +\frac{\delta_{k,0}}{16}.
\end{split}
\end{equation}
Then
\begin{lemma}\label{lemmavir}
Tau-functions $\tau_\alpha^{\bf a}$ satisfy the Heisenberg constraints
\be
 \widehat{J}_{2k}^{{\bf a}}\cdot \tau_\alpha^{\bf a}=0, \quad k>0,
\ee
and the Virasoro constraints
\begin{equation}
\begin{split}\label{VirCn}
\widehat{L}_k^{\alpha,{\bf a}} \cdot \tau_\alpha^{\bf a} &=0, \quad k \geq -\alpha.
\end{split}
\end{equation}
These operators satisfy the commutation relations of the Heisenberg--Virasoro algebra
\begin{equation}
 \begin{aligned}
\left[\widehat{J}_k^{{\bf a}},\widehat{J}_m^{{\bf a}}\right]&=0,                                                                     & &k,m>0,\\
\left[\widehat{L}_k^{\alpha,{\bf a}},\widehat{J}_m^{{\bf a}}\right]&=k  \widehat{J}_{k+m}^{{\bf a}},                             & &k\geq -\alpha,\, m>0,\\
\left[\widehat{L}_k^{\alpha,{\bf a}},\widehat{L}_m^{\alpha,{\bf a}}\right]&=(k-m)\widehat{L}_{k+m}^{\alpha,{\bf a}}, \quad  \quad   & &k,m\geq -\alpha.
\end{aligned}
\end{equation}
\end{lemma}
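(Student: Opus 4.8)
The plan is to deduce all parts of the lemma from a single mechanism: conjugation by $\widehat V_{\bf a}$. Since $\tau_\alpha^{\bf a}=\widehat V_{\bf a}\cdot\tau_\alpha$, any linear constraint $\widehat O\cdot\tau_\alpha=0$ is transported to $\big(\widehat V_{\bf a}\widehat O\widehat V_{\bf a}^{-1}\big)\cdot\tau_\alpha^{\bf a}=0$, and since conjugation by $\widehat V_{\bf a}$ is an automorphism of the Lie algebra $\mathcal L$, every commutator identity among operators of $\mathcal L$ is carried over verbatim to their conjugates. So the whole proof reduces to recognizing $\widehat J_{2k}^{\bf a}$ and $\widehat L_k^{\alpha,{\bf a}}$ as the conjugates of $\widehat J_{2k}$ and $\widehat L_k^\alpha$.

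First I would verify the identity
\be
\widehat L_k^{\alpha,{\bf a}}=\widehat V_{\bf a}\,\widehat L_k^{\alpha}\,\widehat V_{\bf a}^{-1},\qquad k\ge-\alpha .
\ee
By definition $\widehat L_k^{\alpha,{\bf a}}=\tfrac12\widehat L_{2k}^{\bf a}-\tfrac1{2\hbar}\widehat J_{2k+1+2\alpha}^{\bf a}+\tfrac{\delta_{k,0}}{16}$, and $\widehat V_{\bf a}$ is built only out of the $\widehat L_j$'s, hence commutes with the scalar $\hbar^{-1}$ and with constants; using \eqref{LJa} one then gets $\widehat V_{\bf a}\widehat L_k^{\alpha}\widehat V_{\bf a}^{-1}=\tfrac12\widehat L_{2k}^{\bf a}-\tfrac1{2\hbar}\widehat J_{2k+1+2\alpha}^{\bf a}+\tfrac{\delta_{k,0}}{16}=\widehat L_k^{\alpha,{\bf a}}$. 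Likewise $\widehat J_{2k}^{\bf a}=\widehat V_{\bf a}\widehat J_{2k}\widehat V_{\bf a}^{-1}$ is just the definition \eqref{LJa}. By Lemma \ref{lemma32} both are honest operators in $\mathcal L$, so the constraints below make sense even though $\widehat V_{\bf a}$ itself is only formal. Applying them to $\tau_\alpha^{\bf a}=\widehat V_{\bf a}\cdot\tau_\alpha$, the middle $\widehat V_{\bf a}^{-1}\widehat V_{\bf a}$ cancels and one is left with
\be
\widehat J_{2k}^{\bf a}\cdot\tau_\alpha^{\bf a}=\widehat V_{\bf a}\big(\widehat J_{2k}\cdot\tau_\alpha\big)=0\quad(k>0),\qquad
\widehat L_k^{\alpha,{\bf a}}\cdot\tau_\alpha^{\bf a}=\widehat V_{\bf a}\big(\widehat L_k^{\alpha}\cdot\tau_\alpha\big)=0\quad(k\ge-\alpha),
\ee
by the KdV reduction \eqref{Heis} and the Virasoro constraints \eqref{VirC} respectively; here I rely on the standing assumption of this section that $\widehat V_{\bf a}\cdot\tau_\alpha$ is a well-defined formal series.

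For the commutation relations I would use that conjugation by $\widehat V_{\bf a}$ is an algebra automorphism, which reduces the statement to the corresponding relations among $\tfrac12\widehat J_{2k}$ and $\widehat L_k^{\alpha}$. These are immediate from \eqref{comre}: $[\tfrac12\widehat J_{2k},\tfrac12\widehat J_{2m}]=\tfrac k2\delta_{k,-m}$ vanishes for $k,m>0$; $[\widehat L_k^{\alpha},\widehat L_m^{\alpha}]=(k-m)\widehat L_{k+m}^{\alpha}$ is exactly the algebra recorded after \eqref{VirC}; and the mixed bracket follows from $[\widehat L_{2k},\widehat J_{2m}]=-2m\,\widehat J_{2k+2m}$ together with the vanishing of $[\widehat J_{2k+1+2\alpha},\widehat J_{2m}]$ (an odd index can never cancel an even one) and of the bracket with the scalar $\tfrac{\delta_{k,0}}{16}$. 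I do not expect a genuine obstacle here; the only delicate point is the scalar central correction $\tfrac{a_2}{2}\delta_{k,-2}$ appearing in the conjugation formula of Lemma \ref{lemma32} (it is relevant only for $\alpha=1$, $k=-1$), which, being a constant, drops out of both the annihilation statements and the commutators and is therefore harmless. The only other routine check is that the defining combination of $\widehat L_k^{\alpha,{\bf a}}$ commutes past $\widehat V_{\bf a}$ termwise, which is the computation already indicated above.
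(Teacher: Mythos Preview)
Your proof is correct and follows exactly the same route as the paper: both arguments rest entirely on the observation that $\widehat L_k^{\alpha,{\bf a}}=\widehat V_{\bf a}\,\widehat L_k^{\alpha}\,\widehat V_{\bf a}^{-1}$ (and similarly for $\widehat J_{2k}^{\bf a}$), after which the constraints are transported from \eqref{Heis}, \eqref{VirC} and the commutation relations follow because conjugation is a Lie algebra automorphism. Your version is simply more explicit---you spell out the cancellation $\widehat V_{\bf a}^{-1}\widehat V_{\bf a}$, check the mixed bracket termwise, and remark that the central correction $\tfrac{a_2}{2}\delta_{k,-2}$ from Lemma~\ref{lemma32} is a scalar and hence harmless---whereas the paper compresses the entire argument into the single displayed identity for $\widehat L_k^{\alpha,{\bf a}}$.
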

\begin{proof}
Operators $\widehat{J}_k^{{\bf a}}$ and $\widehat{L}_k^{\alpha,{\bf a}}$
are given by the conjugation of the constraints \eqref{Heis} and \eqref{VirC}, in particular
\be
\widehat{L}_k^{\alpha,{\bf a}}= \widehat{V}_{\bf a}   \widehat{L}_k^{\alpha} \widehat{V}_{\bf a}^{-1}.
\ee
\end{proof}

If ${\bf a}$ do not depend on $\hbar$, then
the relation \eqref{relV} is of the form \eqref{aneq}, therefore we can derive the cut-and-join description of $\tau_\alpha^{\bf a}$ from the expression \eqref{leadcaj}.
Let
\be\label{newcaj}
\widehat{{W}}_{\alpha}^{{\bf a}}=\frac{1}{2\alpha+1} \sum_{k=0}^\infty  \widehat{J}_{-2k-1}^{{\bf a}}\left(\widehat{L}_{2k-2\alpha}^{{\bf a}}+\frac{\delta_{k,\alpha}}{8}\right).
\ee
This is a cut-and-join operator for $\tau_\alpha^{\bf a}$.
\begin{lemma}\label{lemma3.3}
\be
\tau_\alpha^{\bf a}= \exp\left({\hbar \widehat{{W}}_{\alpha}^{{\bf a}}}\right) \cdot 1.
\ee
\end{lemma}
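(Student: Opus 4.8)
The plan is to obtain the statement by conjugating the cut-and-join description \eqref{cajkdv} of $\tau_\alpha$ by $\widehat{V}_{\bf a}$, exactly in the spirit of the conjugation principle \eqref{conc} from Section \ref{S2}. First I would rewrite \eqref{leadcaj} in terms of the Heisenberg generators: since multiplication by $(2k+1)t_{2k+1}$ is the operator $\widehat{J}_{-2k-1}$, one has
\be
\widehat{W}_\alpha=\frac{1}{2\alpha+1}\sum_{k=0}^\infty \widehat{J}_{-2k-1}\left(\widehat{L}_{2k-2\alpha}+\frac{\delta_{k,\alpha}}{8}\right),
\ee
so that $\widehat{W}_\alpha^{\bf a}$ of \eqref{newcaj} is precisely the operator obtained from $\widehat{W}_\alpha$ by replacing every $\widehat{J}_m$ and $\widehat{L}_m$ by its conjugate $\widehat{J}_m^{\bf a}=\widehat{V}_{\bf a}\widehat{J}_m\widehat{V}_{\bf a}^{-1}$, $\widehat{L}_m^{\bf a}=\widehat{V}_{\bf a}\widehat{L}_m\widehat{V}_{\bf a}^{-1}$, with the central shift $\tfrac{a_2}{2}\delta_{k,-2}$ of Lemma \ref{lemma32} absorbed into $\widehat{L}_{-2}^{\bf a}$.

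Next I would apply $\widehat{V}_{\bf a}$ to \eqref{cajkdv}. Since conjugation is an algebra homomorphism and $\widehat{V}_{\bf a}$ is independent of $\hbar$, it may be inserted through the exponential:
\be
\tau_\alpha^{\bf a}=\widehat{V}_{\bf a}\cdot\tau_\alpha=\widehat{V}_{\bf a}\exp\left(\hbar\widehat{W}_\alpha\right)\widehat{V}_{\bf a}^{-1}\cdot\bigl(\widehat{V}_{\bf a}\cdot 1\bigr)=\exp\left(\hbar\,\widehat{V}_{\bf a}\widehat{W}_\alpha\widehat{V}_{\bf a}^{-1}\right)\cdot\bigl(\widehat{V}_{\bf a}\cdot 1\bigr).
\ee
I would then check $\widehat{V}_{\bf a}\cdot 1=1$: for the operator \eqref{Virg} each $\widehat{L}_k$ with $k>0$ annihilates constants, because for $k>0$ the quadratic term $\tfrac12\sum_{a+b=-k}ab\,t_at_b$ is empty while the remaining terms carry derivatives, so $\exp\bigl(\sum_{k>0}a_k\widehat{L}_k\bigr)\cdot 1=1$.

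Finally I would identify $\widehat{V}_{\bf a}\widehat{W}_\alpha\widehat{V}_{\bf a}^{-1}$ with $\widehat{W}_\alpha^{\bf a}$. Term by term this is immediate from the first step and the homomorphism property, the conjugates being the explicit linear combinations of Lemma \ref{lemma32}; the only point requiring care is exchanging the conjugation with the infinite sum over $k$. I would justify this by a degree count in the grading \eqref{deg}: every summand $\widehat{J}_{-2k-1}\widehat{L}_{2k-2\alpha}$ has degree $2\alpha+1$, so on any polynomial of bounded degree only finitely many summands — and finitely many terms in each conjugated operator — act nontrivially, making the rearrangement legitimate. Combining the three steps yields $\tau_\alpha^{\bf a}=\exp\bigl(\hbar\widehat{W}_\alpha^{\bf a}\bigr)\cdot 1$. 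I expect the main obstacle to be exactly this bookkeeping: reconciling the $\hbar$-expansion with the degree grading \eqref{deg} so that pulling $\widehat{V}_{\bf a}$ through the exponential and through the infinite cut-and-join sum is rigorous. Modulo the standing well-definedness assumption of Section \ref{S3.3} the manipulations are routine, but that is where the actual content of the lemma lies.
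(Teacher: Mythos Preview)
Your proposal is correct and follows essentially the same route as the paper: conjugate the cut-and-join representation \eqref{cajkdv} by $\widehat{V}_{\bf a}$, use $\widehat{L}_k\cdot 1=0$ for $k>0$ to get $\widehat{V}_{\bf a}\cdot 1=1$, and identify $\widehat{V}_{\bf a}\widehat{W}_\alpha\widehat{V}_{\bf a}^{-1}$ with $\widehat{W}_\alpha^{\bf a}$ of \eqref{newcaj}. The paper's proof is terser and does not spell out the degree bookkeeping you added, but the argument is the same.
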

\begin{proof}
We use the cut-and-join representation (\ref{cajkdv}). Since $\widehat{L}_k \cdot 1=0$ for $k\geq 0$,  
we have
\begin{equation}
\begin{split}
\widehat{V}_{\bf a} \cdot \tau_\alpha  &=  \widehat{V}_{\bf a} \exp\left({\hbar \widehat{W}_{\alpha}}\right) \widehat{V}_{\bf a}^{-1} \cdot 1\\
&=\exp\left({\hbar \widehat{{W}}_{\alpha}^{{\bf a}}}\right) \cdot 1,
\end{split}
\end{equation}
where $\widehat{{W}}_{\alpha}^{{\bf a}}:=\widehat{V}_{\bf a}   \widehat{W}_{\alpha}\widehat{V}_{\bf a}^{-1}$
coincides with \eqref{newcaj}.
\end{proof}
Because of the certain arbitrariness of the cut-and-join description of the tau-functions $\tau_\alpha$, see Remark \ref{remp}, we also construct another version of the cut-and-join operators for the tau-functions $\tau_\alpha^{\bf a}$. Let
\begin{equation}
\begin{split}\label{Wpri}
{\widehat{W}_0^{*{\bf a}}}&=\sum_{k,m\in {\mathbb Z}_{\odd}^+}^\infty\left( {\widehat {J}}_{-k}^{\bf a}  {\widehat {J}}_{-m}^{\bf a}  {\widehat {J}}_{k+m-1}^{\bf a} +\frac{1}{2} {\widehat {J}}^{\bf a}_{-k-m-1} {\widehat {J}}_{k}^{\bf a}  {\widehat {J}}_{m}^{\bf a}\right)+\frac{{\widehat {J}}_{-1}^{\bf a}}{8},\\
{\widehat{W}_1^{*{\bf a}}}&=\frac{1}{3}\sum_{k,m\in {\mathbb Z}_{\odd}^+}^\infty \left(  {\widehat {J}}_{-k}^{\bf a}  {\widehat {J}}_{-m}^{\bf a}   {\widehat {J}}_{k+m-3}^{\bf a}
+\frac{1}{2} {\widehat {J}}_{-k-m-3}^{\bf a} {\widehat {J}}_{k}^{\bf a}  {\widehat {J}}_{m}^{\bf a}\right)+\frac{ ({\widehat {J}}_{-1}^{\bf a})^3}{3!}+\frac{ {\widehat {J}}_{-3}^{\bf a}}{24}.
\end{split}
\end{equation}
\begin{lemma}\label{l35}
\be
\tau_\alpha^{\bf a}=\exp\left(\hbar\, {\widehat{W}_\alpha^{*{\bf a}}}\right)\cdot 1.
\ee
\end{lemma}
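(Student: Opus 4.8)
The plan is to run the argument of Lemma~\ref{lemma3.3} almost verbatim, but starting from the alternative cut-and-join representation $\tau_\alpha=\exp(\hbar\,\widehat{W}_\alpha^*)\cdot 1$ of Remark~\ref{remp} instead of from \eqref{cajkdv}. Two facts have to be established: that $\widehat{V}_{\bf a}$ fixes the constant function, and that conjugating $\widehat{W}_\alpha^*$ by $\widehat{V}_{\bf a}$ produces precisely the operators $\widehat{W}_\alpha^{*{\bf a}}$ written in \eqref{Wpri}.

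First I would rewrite the operators $\widehat{W}_\alpha^*$ of \eqref{CAJ} purely in terms of the Heisenberg generators $\widehat{J}_k$, using $\widehat{J}_k=\p_{t_k}$ and $\widehat{J}_{-k}=k\,t_k$ for $k>0$ and $\widehat{J}_0=0$. Each monomial becomes a cubic monomial in the $\widehat{J}$'s; for instance $km\,t_k t_m\,\p_{t_{k+m-1}}=\widehat{J}_{-k}\widehat{J}_{-m}\widehat{J}_{k+m-1}$ and $\tfrac12(k+m+1)t_{k+m+1}\p_{t_k}\p_{t_m}=\tfrac12\widehat{J}_{-k-m-1}\widehat{J}_k\widehat{J}_m$, so that
\[
\widehat{W}_0^*=\sum_{k,m\in{\mathbb Z}_{\odd}^+}\Bigl(\widehat{J}_{-k}\widehat{J}_{-m}\widehat{J}_{k+m-1}+\tfrac12\widehat{J}_{-k-m-1}\widehat{J}_{k}\widehat{J}_{m}\Bigr)+\tfrac18\widehat{J}_{-1},
\]
and analogously for $\widehat{W}_1^*$; in other words $\widehat{W}_\alpha^*$ is the ${\bf a}=0$ specialization of \eqref{Wpri}. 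The point requiring care is the bookkeeping of the degenerate terms in $\widehat{W}_1^*$, where the derivative index $k+m-3$ can become negative (namely $-1$, for $k=m=1$) and must be read in the $\gl(\infty)$ language as $\widehat{J}_{-1}=t_1$; this degenerate monomial combines with the explicit term $t_1^3/3!=(\widehat{J}_{-1})^3/3!$, and one must verify that the resulting numerical coefficient of $(\widehat{J}_{-1})^3$ agrees, and likewise that $t_3/8=\widehat{J}_{-3}/24$.

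Next, since $\widehat{V}_{\bf a}=\exp\bigl(\sum_{k>0}a_k\widehat{L}_k\bigr)$ and $\widehat{L}_k\cdot 1=0$ for every $k>0$ (all three sums in \eqref{virfull} annihilate the constant when $m>0$), we have $\widehat{V}_{\bf a}\cdot 1=1$. Hence, exactly as in the proof of Lemma~\ref{lemma3.3},
\[
\tau_\alpha^{\bf a}=\widehat{V}_{\bf a}\cdot\tau_\alpha=\widehat{V}_{\bf a}\exp(\hbar\,\widehat{W}_\alpha^*)\widehat{V}_{\bf a}^{-1}\cdot\bigl(\widehat{V}_{\bf a}\cdot 1\bigr)=\exp\bigl(\hbar\,\widehat{V}_{\bf a}\widehat{W}_\alpha^*\widehat{V}_{\bf a}^{-1}\bigr)\cdot 1.
\]
Finally, conjugation by $\widehat{V}_{\bf a}$ is an algebra automorphism with $\widehat{V}_{\bf a}\widehat{J}_k\widehat{V}_{\bf a}^{-1}=\widehat{J}_k^{\bf a}$ by \eqref{LJa} and Lemma~\ref{lemma32}, so applying it term by term to the cubic expression for $\widehat{W}_\alpha^*$ simply replaces each $\widehat{J}_k$ by $\widehat{J}_k^{\bf a}$, which is precisely $\widehat{W}_\alpha^{*{\bf a}}$ as in \eqref{Wpri}; this yields $\tau_\alpha^{\bf a}=\exp(\hbar\,\widehat{W}_\alpha^{*{\bf a}})\cdot 1$. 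The main obstacle I anticipate is the coefficient bookkeeping in the first step --- getting the ${\bf a}=0$ identity between \eqref{CAJ} and \eqref{Wpri} exactly right, including the boundary monomials. The only other point to address, the well-definedness of the composition (each $\widehat{J}_k^{\bf a}=\sum_{m\ge k}\rho[k,m]\widehat{J}_m$ is an infinite sum), is routine: in any fixed degree only finitely many terms of the resulting cubic operator contribute, so it is a legitimate formal differential operator.
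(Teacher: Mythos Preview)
Your approach is exactly the one the paper leaves implicit: mimic the proof of Lemma~\ref{lemma3.3} starting from the alternative operators of Remark~\ref{remp}, using that conjugation by $\widehat V_{\bf a}$ sends each $\widehat J_k$ to $\widehat J_k^{\bf a}$ and fixes the constant $1$. For $\alpha=0$ your translation of \eqref{CAJ} into Heisenberg generators is correct and matches \eqref{Wpri} at ${\bf a}=0$ on the nose, and the rest of the argument (that $\widehat V_{\bf a}\cdot 1=1$, and that conjugation is an algebra automorphism acting termwise on the cubic expression) is exactly right.

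The one genuine slip is in your handling of the degenerate $k=m=1$ term in $\widehat W_1^*$. You propose to ``read'' the symbol $\partial/\partial t_{-1}$ in \eqref{CAJ} as $\widehat J_{-1}=t_1$; but this is not a rewriting, it changes the operator. In \eqref{CAJ} the term $k=m=1$ in the first sum is simply absent (there is no variable $t_{-1}$, so that summand is zero), and the coefficient of $t_1^3$ in $\widehat W_1^*$ is exactly $\tfrac{1}{3!}=\tfrac{1}{6}$ coming solely from the explicit tail --- this is the coefficient that reproduces $\log\tau_1 = (\tfrac{t_1^3}{6}+\tfrac{t_3}{8})\hbar+\cdots$. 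Under your reinterpretation the coefficient would become $\tfrac{1}{3}+\tfrac{1}{6}=\tfrac{1}{2}$, and then $\exp(\hbar\,\cdot\,)\cdot 1$ no longer equals $\tau_1$, so the conjugation argument collapses at its starting point. The correct reading is that the first sum in \eqref{Wpri} for $\alpha=1$ likewise omits $(k,m)=(1,1)$, mirroring \eqref{CAJ}; then $\widehat W_1^{*{\bf 0}}=\widehat W_1^*$ as operators, the explicit $(\widehat J_{-1}^{\bf a})^3/3!$ in \eqref{Wpri} is precisely the conjugate of $t_1^3/3!$, and your proof goes through unchanged.
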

Note that the operators ${\widehat{W}_\alpha^{\bf a}}$ and ${\widehat{W}_\alpha^{*{\bf a}}}$ only contain the cubic and linear  terms in $ \widehat{J}_{k}$.


\section{Triple Hodge integrals and KP hierarchy}\label{H3}

In \cite{H3_1,H3_2} the author has proven that the generating functions of the triple Hodge integrals satisfying the Calabi--Yau condition are tau-functions of the KP hierarchy. These tau-functions can be related to the tau-functions $\tau_\alpha$ considered in Section \ref{S3} by certain elements of the subgroup ${\mathcal V}_+$ of the Heisenberg--Virasoro group. The Virasoro constraints allow us to reduce these group elements to the elements of the Virasoro group. In this section, using the arguments of Section \ref{S3.3} we will derive the cut-and-join description and the Heisenberg--Virasoro constraints for the generating functions of the triple Hodge integrals satisfying the Calabi--Yau condition.


\subsection{KP integrability}\label{S_KP}

Let us remind the description of  the generating functions of the triple Hodge integrals satisfying the Calabi--Yau condition by KP tau-functions \cite{H3_1,H3_2}.
Consider the linear change of variables
\begin{equation}\label{changeof}
\begin{split}
T_0^{q,p}({\bf t})&=t_1,\\
T_k^{q,p}({\bf t})&=\sum_{m=1}^\infty m t_m \left(q \frac{\p}{\p t_m}+\frac{2q+p}{\sqrt{p+q}}\frac{\p}{\p t_{m-1}} +\frac{\p}{\p t_{m-2}}\right)\cdot T_{k-1}^{q,p}({\bf t}),\quad k>1,
\end{split}
\end{equation}
where $q$ and $p$ are two parameters with $q+p\neq 0$ if $q\neq 0$.

We also introduce the coefficients $a_k^{q,p}$. They are given implicitly through the Definition \ref{def1},
\be\label{ff}
f(z;{\bf a}^{q,p})=f(z),
\ee
where the series on the right hand side is given by
\be\label{xdd}
f(z) \hbox{d} f(z)=\frac{z \hbox{d}z}{(1+\sqrt{p+q}z)(1+q z/\sqrt{p+q})}.
\ee 
In particular,
\be\label{aa}
a_1^{q,p}=\frac{1}{3}{\frac {p+2\,q}{\sqrt {p+q}}}, \quad a_2^{q,p}=-\frac{1}{12}{\frac {{p}^{2}+pq+{q}^{2}}{p+q}}, \quad a_3^{q,p}={\frac {31\,{p}^{3}+69\,{p}^{2}q+21\,p{q}^{2}+14\,{q}^{3}}{1080\, \left( p+q \right) ^{3/2}}}.
\ee

Let ${\mathtt x}(z)=\frac{f^2(z)}{2}=z^2/2+\dots$.
If $p\neq 0$ and $q\neq 0$ we have
\be\label{fpq}
{\mathtt x}(z)=\frac{p+q}{pq}\log\left(1+\frac{qz}{\sqrt{p+q}}\right)-\frac{1}{p}\log(1+\sqrt{p+q}z)
\ee
and
\be\label{fpq1}
f(z)=\sqrt{2\frac{p+q}{pq}\log\left(1+\frac{qz}{\sqrt{p+q}}\right)-\frac{2}{p}\log(1+\sqrt{p+q}z)}.
\ee
For $q=0$ it degenerates to
\be\label{xq}
{\mathtt x}(z)=\frac{z}{\sqrt{p}}-\frac{1}{p}\log(1+\sqrt{p}z),
\ee
and for $p=0$ it degenerates to
\be
{\mathtt x}(z)=\frac{1}{q}\log(1+\sqrt{q}z)-\frac{1}{\sqrt{q}}\frac{z}{1+\sqrt{q} z}.
\ee

Let us note that the change of variables \eqref{changeof} is generated by the operator
\be
\frac{\p}{\p {\mathtt x}}=\frac{(1+\sqrt{p+q}z)(1+q z/\sqrt{p+q})}{z} \frac{\p}{\p z}.
\ee
Namely, the functions 
\be\label{phik}
\Phi_k(z):=\left(-\frac{\p}{\p {\mathtt x}}\right)^k\cdot \frac{1}{z}
\ee
define this change of variables if we associate $k t_k$ with $z^{-k}$ and $T_k^{q,p}$ with $\Phi_k(z)$. 

Let
\be{\mathtt y}_0(z)=\frac{1}{z}
\ee
and 
\be\label{ygr}
{\mathtt y}_1(z)=\int_0^z{\mathtt y}_0(\eta) \hbox{d} {\mathtt x}(\eta) =\int_0^z \frac{\hbox{d} {\mathtt x}(\eta)}{\eta},
\ee
where $ \hbox{d} {\mathtt x}(\eta)=f(\eta)  \hbox{d} f(\eta)$. For $p\neq 0$ and $q\neq 0$ we have
\be\label{y1}
{\mathtt y}_1(z) =\frac {\sqrt {p+q}}{p}   \left( \log  \left( 1+\sqrt {p+q}z \right)  -\log  \left(1+\frac{qz}{ \sqrt {p+q}} \right) \right).
\ee
For $q=0$ it reduces to
\be
{\mathtt y}_1(z)=\frac {1}{\sqrt{p}} \log  \left( 1+\sqrt {p }z \right)
\ee
and for $p=0$ it degenerates to
\be
{\mathtt y}_1(z)=\frac{z}{1+\sqrt{q}z}.
\ee
The series ${\mathtt x}$ and ${\mathtt y}_1$ satisfy the equation
 \be\label{spp}
p\exp\left(-q{\mathtt x}(z)\right)=(p+q)\exp\left(\frac{q}{\sqrt{p+q}}{\mathtt y}_1(z)\right)-q\exp\left(\sqrt{p+q}{\mathtt y}_1(z)\right).
\ee

 Let us consider the Hodge bundle $\mathbb E$, a rank $g$ vector bundle over $\overline{\mathcal{M}}_{g,n}$. For $\lambda_i=c_i({\mathbb E})$ we set
\be
\Lambda_g(u)=\sum_{i=0}^g u^i \lambda_i.
\ee
Below we focus on the case of cubic Hodge integrals
\be
\int_{\overline{\mathcal{M}}_{g,n}} \Theta_{g,n}^{1-\alpha} \Lambda_g (u_1) \Lambda_g (u_2) \Lambda_g (u_3)\psi_1^{a_1} \psi_2^{a_2} \cdots \psi_n^{a_n}
\ee
satisfying an additional {\em Calabi--Yau condition}
\be\label{specialcond}
\frac{1}{u_1}+\frac{1}{u_2}+\frac{1}{u_3}=0.
\ee
For this case it is convenient to use the parametrization
\be\label{speccond}
u_1=-p, u_2=-q, u_3=\frac{pq}{p+q}.
\ee
We introduce the generating functions of the triple Hodge integrals satisfying the Calabi--Yau condition, with (for $\alpha=0$) and without (for $\alpha=1$) Norbury's $\Theta$-classes, namely
\be\label{GenfN}
Z_{q,p}^{(\alpha)}({\bf T})=\exp\left(\sum_{g=0}^\infty \sum_{n=0}^\infty \hbar^{2g-2+n} {\mathcal F}_{g,n}^{\alpha}\right),
\ee
where
\be\label{FF}
{\mathcal F}_{g,n}^\alpha=\sum_{a_1,\dots,a_n\ge 0}\frac{\prod T_{a_i}}{n!}\int_{\overline{\mathcal{M}}_{g,n}} \Theta_{g,n}^{1-\alpha} \Lambda_g (-q) \Lambda_g (-p) \Lambda_g \left(\frac{pq}{p+q}\right)\psi_1^{a_1} \psi_2^{a_2} \cdots \psi_n^{a_n}.
\ee
Define the coefficients ${v}_j^{(\alpha)}$ by
\begin{equation}\label{vdef}
\begin{split}
\sum_{j=2+2\alpha}^\infty {v}_j^{(\alpha)} z^j &=\int_0^z \left(f(\eta)^{2\alpha-1}- {\mathtt y}_\alpha(\eta)\right) \hbox{d}{\mathtt x}(\eta).
\end{split}
\end{equation}
In particular, for $\alpha=0$ one has
\begin{equation}\label{vdef1}
\begin{split}
v^{(0)}_k&=[z^k]\left(f(z)- {\mathtt y}_1(z)\right).
\end{split}
\end{equation}
Let us introduce an element of the Virasoro group
\be
\widehat{V}_{q,p}=\Xi^{-1}(f(z))=\exp\left({\sum_{k\in{\mathbb Z}_{>0}} a_k^{q,p} \widehat{L}_k}\right).
\ee

Then we have
\begin{theorem*}[\cite{H3_1}]
The generating functions of triple Hodge integrals and $\Theta$-Hodge integrals satisfying the Calabi--Yau condition in the variables ${{\bf T}^{q,p}(\bf t)}$ are tau-functions of the KP hierarchy,
\be\label{tau^alpha}
\tau_{q,p}^{(\alpha)}({\bf t})=Z_{q,p}^{(\alpha)}({{\bf T}^{q,p}(\bf t)}).
\ee
They are related to the KW and BGW tau-functions by the elements of the Heisenberg--Virasoro group 
\be\label{qptau}
\tau_{q,p}^{(\alpha)}({\bf t})=\exp\left({\hbar^{-1} \sum_{j=2+2\alpha}^\infty v_j^{(\alpha)} \frac{\p}{\p t_{j}}}\right)
\widehat{V}_{q,p} \cdot  \tau_{\alpha}({\bf t}).
\ee
\end{theorem*}
The proof of the theorem is based on the comparison between the Givental operators and the elements of the  Heisenberg--Virasoro group ${\mathcal V}$ of KP symmetries.
For the $p=0$ and $\alpha=1$ case, corresponding to the linear Hodge integrals, the KP integrability  was proved by Kazarian \cite{Kaza}. Relation between tau-functions given by the elements of the Heisenberg--Virasoro group elements for this case was derived in \cite{A1}. Identification of this group element and the Givental group element has been found in \cite{Wang}. Alternative proof of the integrability part of this theorem for $\alpha=1$, based on the Mari\~{n}o--Vafa formula, is given by Kramer \cite{Reinier}. 

\begin{remark}
In this paper we focus on the tau-functions $\tau_{q,p}^{(\alpha)}$ and do not investigate the properties of the generating functions $Z_{q,p}^{(\alpha)}$ in the original variables. Virasoro constraints and cut-and-join description for them can be found with the help of the Givental formalism \cite{Giv1,Teleman} and will be considered elsewhere. 
\end{remark}

\begin{remark}
Some other relations between triple Hodge integrals satisfying the Calabi--Yau condition and integrable systems are known, see \cite{H3_1} for the references. 
\end{remark}
\subsection{Heisenberg--Virasoro and dimension constraints}\label{S42}

We start with the permutation of the translation operator with the element of the Virasoro group in \eqref{qptau}. 
Namely, Lemma \ref{lemma32} leads to the relation
\be\label{le}
\tau_{q,p}^{(\alpha)}({\bf t})=\widehat{V}_{q,p} \exp \left({\hbar^{-1} \sum_{j=2+2\alpha} \tilde{v}_j^{(\alpha)} \frac{\p}{\p t_{j}}}\right)\cdot  \tau_{\alpha}({\bf t}),
\ee
where from \eqref{vdef} for the coefficients $\tilde{v}_j^{(\alpha)} $ we have
\begin{equation}\label{vtilde}
\begin{split}
\sum_{j=2+2\alpha}^\infty \tilde{v}_j^{(\alpha)} z^j =\int_0^{h(z)} \left(f(\eta)^{2\alpha-1}-{\mathtt y}_\alpha(\eta)\right) \hbox{d}{\mathtt x}(\eta).
\end{split}
\end{equation}
Here $h(z)$ is a series, inverse to $f(z)$ of \eqref{fpq}, that is $h(f(z))=f(h(z))=z$, hence
\be\label{vtilde1}
\sum_{j=2+2\alpha}^\infty \tilde{v}_j^{(\alpha)} z^j =\int_0^z \left(\eta^{2\alpha}-\eta {\mathtt y}_\alpha(h(\eta))\right) \hbox{d}\eta.
\ee
Since $\tau_{\alpha}({\bf t})$ are tau-function of the KdV hierarchy, we can neglect the translations of the even times $t_{2k}$ and
\be\label{transn}
\tau_{q,p}^{(\alpha)}({\bf t})=\widehat{V}_{q,p}  \exp\left({\hbar^{-1} \sum_{j=1+\alpha} \tilde{v}_{2j+1}^{(\alpha)} \frac{\p}{\p t_{2j+1}}}\right)\cdot  \tau_{\alpha}({\bf t}).
\ee
To describe the Heisenberg--Virasoro constraints for the tau-functions $\tau_{q,p}^{(\alpha)}$ 
let us consider
\begin{equation}
 \begin{aligned}
 \widehat{J}_k^{(q,p)} &=\frac{1}{2}\sum_{m=2k}^\infty \rho[2k,m] \widehat{J}_{m},\\ 
\widehat{L}_k^{\alpha,(q,p)}&=\frac{1}{2}\sum_{m=2k}^\infty \sigma[2k,m] \widehat{L}_{m}- \frac{1}{2\hbar}\sum_{m=2k+1+2\alpha}^\infty\chi_\alpha[2k,m] \widehat{J}_{m}
 +\frac{\delta_{k,0}}{16}-\frac{\delta_{k,-1}}{48}{\frac {{p}^{2}+pq+{q}^{2}}{p+q}}.
\end{aligned}
\end{equation}
Here the coefficients $\sigma[k,m]$, $\rho[k,m]$, and $\chi_\alpha[k,m]$ are given by
\be
\rho[k,m]= [z^m] f(z)^k,\quad \quad \sigma[k,m]= [z^{m+1}]\frac{f(z)^{k+1}}{f'(z)},\quad \quad \chi_\alpha[k,m]= [z^m] f(z)^{k+2}{\mathtt y}_\alpha(z),
\ee
where the function $f(z)$ is given by \eqref{fpq1}. 
\begin{theorem}\label{propp}
The tau-functions of triple Hodge integrals satisfy the Heisenberg--Virasoro constraints
\begin{equation}
 \begin{aligned}
\widehat{J}_k^{(q,p)} \cdot {\tau}_{q,p}^{(\alpha)}&=0, \quad & &k>0,\\
\widehat{L}_k^{\alpha,(q,p)} \cdot {\tau}_{q,p}^{(\alpha)} &=0,\quad \quad & &k \geq -\alpha.
\end{aligned}
\end{equation}
These operators satisfy the commutation relations of the Heisenberg--Virasoro algebra
\begin{equation}
 \begin{aligned}
\left[\widehat{J}_k^{(q,p)},\widehat{J}_m^{(q,p)}\right]&=0,                                                                     & &k,m>0,\\
\left[\widehat{L}_k^{\alpha,(q,p)},\widehat{J}_m^{(q,p)}\right]&=k  \widehat{J}_{k+m}^{(q,p)},                             & &k\geq -\alpha,\, m>0,\\
\left[\widehat{L}_k^{\alpha,(q,p)},\widehat{L}_m^{\alpha,(q,p)}\right]&=(k-m)\widehat{L}_{k+m}^{\alpha,(q,p)}, \quad  \quad   & &k,m\geq -\alpha.
\end{aligned}
\end{equation}
\end{theorem}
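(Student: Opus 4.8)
The plan is to reduce everything to the conjugation machinery already developed in Section \ref{S3.3}, applied to the specific group element appearing in \eqref{transn}. First I would observe that, since $\tau_\alpha$ satisfies the KdV reduction constraints $\widehat J_{2k}\cdot\tau_\alpha=0$, the shift operator $\exp(\hbar^{-1}\sum_{j}\tilde v_{2j+1}^{(\alpha)}\,\p/\p t_{2j+1})$ acting on $\tau_\alpha$ can equivalently be traded, via Lemma \ref{virtos} (i.e.\ Lemma 1.1 of \cite{A1}), for an additional even-Virasoro factor $\exp(\sum_k b_{2k}\widehat L_{2k})$. Composing this with $\widehat V_{q,p}=\exp(\sum_k a_k^{q,p}\widehat L_k)$ produces a single Virasoro group element $\widehat V_{\bf a}$ of the form \eqref{Virg}, with associated series $f(z;{\bf a})$ equal to $f(z)$ of \eqref{fpq1} — this is precisely the content of \eqref{ff}, \eqref{xdd}. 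Hence $\tau_{q,p}^{(\alpha)}=\widehat V_{\bf a}\cdot\tau_\alpha=\tau_\alpha^{\bf a}$ in the notation of \eqref{relV}, and Lemma \ref{lemmavir} immediately gives the constraints $\widehat J_{2k}^{\bf a}\cdot\tau_{q,p}^{(\alpha)}=0$ and $\widehat L_k^{\alpha,{\bf a}}\cdot\tau_{q,p}^{(\alpha)}=0$ for $k\geq-\alpha$, together with the stated commutation relations, which are just the conjugated Heisenberg--Virasoro relations.

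The remaining work is bookkeeping: I must verify that the conjugated operators $\widehat J_k^{\bf a}$ and $\widehat L_k^{\alpha,{\bf a}}$, expanded via Lemma \ref{lemma32} as $\widehat J_k^{\bf a}=\sum_{m\geq k}\rho[k,m]\widehat J_m$ and $\widehat L_k^{\bf a}=\sum_{m\geq k}\sigma[k,m]\widehat L_m+\tfrac{a_2}{2}\delta_{k,-2}$, coincide with the operators $\widehat J_k^{(q,p)}$, $\widehat L_k^{\alpha,(q,p)}$ written in the theorem. For the Heisenberg part this is immediate from the definition $\widehat J_k^{\bf a}=\tfrac12\widehat J_{2k}^{\bf a}$ and $\rho[2k,m]=[z^m]f(z)^{2k}$. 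For the Virasoro part I would track three contributions separately: the $\tfrac12\widehat L_{2k}^{\bf a}$ term yields $\tfrac12\sum_m\sigma[2k,m]\widehat L_m$ plus the $\delta_{k,-1}$ constant coming from the $\tfrac{a_2}{2}$ central correction with $a_2=a_2^{q,p}$ from \eqref{aa} (giving the $-\tfrac{1}{48}\tfrac{p^2+pq+q^2}{p+q}$ term); the $-\tfrac{1}{2\hbar}\widehat J_{2k+1+2\alpha}^{\bf a}$ term must be rewritten — here is where the combination of the pure $f$-conjugation and the extra $\tilde v$-shift enters — to produce the coefficients $\chi_\alpha[2k,m]=[z^m]f(z)^{2k+2}{\mathtt y}_\alpha(z)$; and the $\tfrac{\delta_{k,0}}{16}$ term is conjugation-invariant because conjugating a constant does nothing. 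The identification of the $\chi_\alpha$ coefficients requires using the definitions \eqref{vdef}, \eqref{vtilde1} of $\tilde v^{(\alpha)}$ together with ${\mathtt y}_\alpha$, $\hbox{d}{\mathtt x}=f\,\hbox{d}f$, and matching Taylor coefficients of the relevant generating series.

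The main obstacle I anticipate is exactly this last identification: showing that conjugating $\widehat L_k^\alpha = \tfrac12\widehat L_{2k}-\tfrac{1}{2\hbar}\widehat J_{2k+1+2\alpha}+\tfrac{\delta_{k,0}}{16}$ by the \emph{combined} element — not just $\widehat V_{q,p}$ but $\widehat V_{q,p}$ followed by the $\tilde v^{(\alpha)}$-translation, which by Lemma \ref{virtos}/\eqref{le} is itself a Virasoro element on $\tau_\alpha$ — sends the Heisenberg summand $\widehat J_{2k+1+2\alpha}$ to $\sum_m\chi_\alpha[2k,m]\widehat J_m$ with $\chi_\alpha[2k,m]=[z^m]f(z)^{2k+2}{\mathtt y}_\alpha(z)$ rather than merely to $\sum_m\rho[2k+1+2\alpha,m]\widehat J_m$. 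The point is that the extra translation precisely converts the ``naive'' power $f(z)^{2k+1+2\alpha}$ into $f(z)^{2k+2}{\mathtt y}_\alpha(z)$; this follows by differentiating the relation \eqref{vdef} defining the $v^{(\alpha)}$ (equivalently \eqref{ygr}, ${\mathtt y}_1=\int{\mathtt y}_0\,\hbox{d}{\mathtt x}$) and using that on the Grassmannian side the Heisenberg generator $\mathtt j_m=z^m$ conjugates by $f$ to $f(z)^m$ while the additional $\p/\p t_{2j+1}$-shift corresponds to adding $\sum \tilde v_j^{(\alpha)}z^{-j}$-type terms that resum into the ${\mathtt y}_\alpha$ factor. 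Once this coefficient-matching is checked, the theorem is complete; the commutation relations need no separate argument since they are conjugates of \eqref{comre} restricted to the relevant subalgebra, exactly as in the proof of Lemma \ref{lemmavir}.
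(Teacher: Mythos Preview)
Your overall strategy---conjugate the known constraints \eqref{Heis}, \eqref{VirC} by the group element in \eqref{transn}---is the paper's strategy too, and your identification of the $\tfrac{a_2}{2}\delta_{k,-2}$ central term via \eqref{aa} is exactly right. But there is a genuine confusion in your execution that would cause the argument to fail as written.

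You claim that after trading the translation for an even-Virasoro factor via Lemma~\ref{virtos} and composing with $\widehat V_{q,p}$, the resulting single Virasoro element $\widehat V_{\bf a}$ has associated series $f(z;{\bf a})=f(z)$ of \eqref{fpq1}. This is false: the series $f(z)$ is $\Xi(\widehat V_{q,p})$ alone, while the combined element $\widehat V_{q,p}\widehat V_\Delta^{(\alpha)}$ has series $f_\alpha(z)=f_\Delta^{(\alpha)}(f(z))$ (cf.\ Proposition~\ref{cor1}). Consequently, if you apply Lemma~\ref{lemmavir} to the combined element, the Virasoro part would come with coefficients $\sigma_\alpha[2k,m]=[z^{m+1}]f_\alpha(z)^{2k+1}/f_\alpha'(z)$, not the $\sigma[2k,m]=[z^{m+1}]f(z)^{2k+1}/f'(z)$ that appear in the theorem. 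You cannot simultaneously (i) absorb the translation into a single Virasoro element, (ii) declare that element to have series $f(z)$, and (iii) invoke an ``extra translation'' to fix the Heisenberg summand---once absorbed, there is no extra translation left.

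The paper avoids this by \emph{not} absorbing the translation. It conjugates in two steps, in the order dictated by \eqref{transn}: first by the translation operator $\exp(\hbar^{-1}\sum\tilde v_{2j+1}^{(\alpha)}\partial_{t_{2j+1}})$, then by $\widehat V_{q,p}$. The translation commutes with all $\widehat J_m$ (so Heisenberg constraints are untouched) and sends $\widehat L_k^\alpha$ to
\[
\tfrac12\widehat L_{2k}-\tfrac{1}{2\hbar}\sum_{j\geq 2k+1+2\alpha}[z^j]\,z^{2k+2}{\mathtt y}_\alpha(h(z))\,\partial_{t_j}+\tfrac{\delta_{k,0}}{16},
\]
which follows in one line by differentiating \eqref{vtilde1}: $\sum_j j\tilde v_j^{(\alpha)}z^{j-1}=z^{2\alpha}-z\,{\mathtt y}_\alpha(h(z))$. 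The second conjugation, by $\widehat V_{q,p}$, is then pure Lemma~\ref{lemma32}: $\widehat L_{2k}\mapsto\sum_m\sigma[2k,m]\widehat L_m+\tfrac{a_2^{q,p}}{2}\delta_{k,-1}$, and the Heisenberg summand transforms via $z\mapsto f(z)$, turning $[z^j]z^{2k+2}{\mathtt y}_\alpha(h(z))$ into $[z^m]f(z)^{2k+2}{\mathtt y}_\alpha(z)=\chi_\alpha[2k,m]$ (since $h(f(z))=z$). This directly yields the operators as stated, with no further coefficient-matching needed. The commutation relations then follow by conjugation exactly as you say.
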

\begin{proof}
Consider the conjugation of the constraints \eqref{Heis} and \eqref{VirC} by the translation operator 
\be
\exp\left({\hbar^{-1} \sum_{j=1+\alpha} \tilde{v}_{2j+1}^{(\alpha)} \frac{\p}{\p t_{2j+1}}}\right).
\ee
The Heisenberg constraints do not change, and the conjugated Virasoro constraints are given by the operators
\be
\frac{1}{2}\widehat{L}_{2k}-\frac{1}{2\hbar}\sum_{j=2k+1+2\alpha}^\infty[z^j]z^{2k+2}{\mathtt y}_\alpha(h(z))\frac{\p}{\p t_{j}} +\frac{\delta_{k,0}}{16}.
\ee
Then the conjugation by the Virasoro group element $\widehat{V}_{q,p}$ is described by Lemma \ref{lemma32}. Note that the value of $a_2$ in  \eqref{rrr} is provided by \eqref{aa}.
\end{proof}
For the case of linear Hodge integrals ($p=0$, $\alpha=1$) the Heisenberg--Virasoro constraints were derived in \cite{A1}. A more convenient basis of constraints was obtained in \cite{GW}. Constraints in Theorem \ref{propp} provide a generalization of this basis for arbitrary $p$ and $\alpha\in\{0,1\}$.
\begin{remark}
The Heisenberg--Virasoro constraints can also be derived from the Kac--Schwarz description, constructed in \cite{H3_2}.
\end{remark}

The sting equation for $\alpha=1$ has a particularly simple form. We have
\begin{equation}
 \begin{aligned}
 \sigma[-2,m]&=  [z^{m+1}] \frac{1}{f'(z)f(z)}\\
 &= [z^{m+2}](1+\sqrt{p+q}z)(1+q z/\sqrt{p+q})\\
& =\delta_{m,-2}+\frac{2q+p}{\sqrt{p+q}}\delta_{m,-1}+q\delta_{m,0}
\end{aligned}
\end{equation}
and
\be
\chi_1[-2,m]=[z^m]{\mathtt y}_1=[z^m] \frac {\sqrt {p+q}}{p}   \left( \log  \left( 1+\sqrt {p+q}z \right)  -\log  \left(1+\frac{qz}{ \sqrt {p+q}} \right) \right).
\ee
Then
\be
\widehat{L}_{-1}^{1,(q,p)}=\frac{1}{2}\widehat{L}_{-2}+\frac{2q+p}{2\sqrt{p+q}} \widehat{L}_{-1}+\frac{q}{2} \widehat{L}_{0}- \frac{1}{2\hbar}\sum_{m=1}^\infty\chi_1[-2,m] \widehat{J}_{m}
- \frac{1}{48}{\frac {{p}^{2}+pq+{q}^{2}}{p+q}}
\ee
annihilates the tau-function
\be
\widehat{L}_{-1}^{1,(q,p)}  \cdot {\tau}_{q,p}^{(1)}=0.
\ee

From the dimension constraints we have
\begin{equation}
\begin{split}
\left(q\frac{\p}{\p q}+p\frac{\p}{\p p}+\sum_{a=0}^\infty \left(a+\frac{1}{2}\right)T_a\frac{\p}{\p T_a}-\frac{2\alpha+1}{2}\hbar \frac{\p}{\p \hbar}\right)\cdot Z_{q,p}^{(\alpha)}=0.
\end{split}
\end{equation}
Let 
\be\label{Eu3}
E=2 q\frac{\p}{\p q}+2 p\frac{\p}{\p p}+\widehat{L}_0
\ee
be the Euler operator. Then from relations between $Z^{(\alpha)}$'s and $\tau_\alpha$'s we have
\begin{proposition}\label{prop1}
\begin{equation}
\begin{split}
E \cdot \tau_{q,p}^{(\alpha)}= (2\alpha+1)\hbar \frac{\p}{\p \hbar}  \tau_{q,p}^{(\alpha)}.
\end{split}
\end{equation}
\end{proposition}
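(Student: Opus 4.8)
The plan is to pull the dimension constraint on $Z_{q,p}^{(\alpha)}$ stated just above back through the change of variables ${\bf t}\mapsto {\bf T}^{q,p}({\bf t})$ of \eqref{changeof}. Equip functions of ${\bf t},q,p$ with the grading $\deg t_k=k$, $\deg q=\deg p=2$; then $\widehat L_0=\sum_k kt_k\frac{\p}{\p t_k}$ is the ${\bf t}$-degree and the Euler operator $E$ of \eqref{Eu3} is the total degree. The key ingredient is that the change of variables is homogeneous:
\be\label{planhom}
\left(2q\frac{\p}{\p q}+2p\frac{\p}{\p p}+\widehat L_0\right)\cdot T_a^{q,p}({\bf t})=(2a+1)\,T_a^{q,p}({\bf t}),\qquad a\ge 0.
\ee
I would prove \eqref{planhom} by induction on $a$ from \eqref{changeof}: the base case $T_0^{q,p}=t_1$ is immediate, and in the recursive step the prefactor $mt_m$ raises degree by $m$ while each of $q\frac{\p}{\p t_m}$, $\frac{2q+p}{\sqrt{p+q}}\frac{\p}{\p t_{m-1}}$, $\frac{\p}{\p t_{m-2}}$ shifts it by $2-m$, so the operator on the right of \eqref{changeof} raises degree by $2$ and hence $\deg T_a^{q,p}=2+\deg T_{a-1}^{q,p}=2a+1$. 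Equivalently, \eqref{planhom} says that the functions $\Phi_k$ of \eqref{phik} have degree $2k+1$: from \eqref{xdd} one reads off $\deg{\mathtt x}=-2$ in the grading $\deg z=-1$, $\deg q=\deg p=2$, so $\Phi_k=(-\p/\p{\mathtt x})^k\cdot z^{-1}$ has degree $2k+1$.

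Granting \eqref{planhom}, apply $E$ to $\tau_{q,p}^{(\alpha)}({\bf t})=Z_{q,p}^{(\alpha)}({\bf T}^{q,p}({\bf t}))$ by the chain rule; since $Z_{q,p}^{(\alpha)}$ depends on ${\bf t}$ only through the $T_a^{q,p}$,
\be\label{planchain}
E\cdot\tau_{q,p}^{(\alpha)}=\left[\left(2q\frac{\p}{\p q}+2p\frac{\p}{\p p}\right)Z_{q,p}^{(\alpha)}\right]_{{\bf T}={\bf T}^{q,p}({\bf t})}+\sum_{a\ge0}\left(\frac{\p Z_{q,p}^{(\alpha)}}{\p T_a}\right)_{{\bf T}={\bf T}^{q,p}({\bf t})}\left(E\cdot T_a^{q,p}({\bf t})\right).
\ee
Substituting \eqref{planhom} collapses the right-hand side to $\bigl[\bigl(2q\frac{\p}{\p q}+2p\frac{\p}{\p p}+\sum_a(2a+1)T_a\frac{\p}{\p T_a}\bigr)Z_{q,p}^{(\alpha)}\bigr]$ evaluated at ${\bf T}^{q,p}({\bf t})$, which by the displayed dimension constraint on $Z_{q,p}^{(\alpha)}$ (after multiplying by $2$) equals $(2\alpha+1)\hbar\frac{\p}{\p\hbar}Z_{q,p}^{(\alpha)}$ at ${\bf T}^{q,p}({\bf t})$. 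As the substitution \eqref{changeof} does not involve $\hbar$, this is $(2\alpha+1)\hbar\frac{\p}{\p\hbar}\tau_{q,p}^{(\alpha)}$, which is the assertion.

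The delicate point — and the step I would single out as the main obstacle, although it is ultimately bookkeeping — is \eqref{planhom}: one must check that $\deg q=\deg p=2$ is precisely the grading making \eqref{changeof} homogeneous (compatibly with $f$, ${\mathtt x}$, and the coefficients $a_k^{q,p}$ of \eqref{aa}) and then track degrees through the recursion. An alternative that avoids the substitution uses \eqref{qptau}: the operator there, a product of $\widehat V_{q,p}=\exp\bigl(\sum_k a_k^{q,p}\widehat L_k\bigr)$ and $\exp\bigl(\hbar^{-1}\sum_{j\ge 2+2\alpha}v_j^{(\alpha)}\frac{\p}{\p t_j}\bigr)$, commutes with $E-(2\alpha+1)\hbar\frac{\p}{\p\hbar}$; via the commutation relations \eqref{comre} this reduces to the identities $\deg a_k^{q,p}=k$ (read off from \eqref{ff}--\eqref{xdd}) and $\deg v_j^{(\alpha)}=j-1-2\alpha$ (from \eqref{vdef}), which make the relevant commutators cancel termwise. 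Since $\tau_\alpha$ is independent of $p,q$ and satisfies $\widehat L_0\cdot\tau_\alpha=(1+2\alpha)\hbar\frac{\p}{\p\hbar}\tau_\alpha$ by \eqref{dimV}, one then gets $\bigl(E-(2\alpha+1)\hbar\frac{\p}{\p\hbar}\bigr)\tau_{q,p}^{(\alpha)}=0$.
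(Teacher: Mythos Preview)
Your argument is correct and is precisely the intended one: the paper states only that the proposition follows ``from relations between $Z^{(\alpha)}$'s and $\tau_\alpha$'s'' after recording the dimension constraint on $Z_{q,p}^{(\alpha)}$, and you have supplied the details of pulling this constraint back through the substitution ${\bf T}={\bf T}^{q,p}({\bf t})$, with the homogeneity \eqref{planhom} verified cleanly from \eqref{changeof}. Your alternative via \eqref{qptau} and \eqref{dimV} is also correct and amounts to the same degree count packaged differently.
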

\begin{remark}
From this proposition it immediately follows that one of the parameters $p$, $q$ and $\hbar$ in the tau-functions $\tau_{q,p}^{(\alpha)}$ is redundant. 
\end{remark}
For the Euler operator \eqref{Eu3} we do not know the relation of the form \eqref{Weq1}.
However, tau-functions ${\tau}_{q,p}^{(\alpha)}$ have a simple cut-and-join description, which leads to the algebraic topological recursion. To derive it we will use \eqref{aneq}, where $\widehat{U}$ is a Virasoro group element described in the next section.

\subsection{Factorizations of the Virasoro group elements }

In this section, we will describe the relation between the tau-functions $\tau_{\alpha}$ and ${\tau}_{q,p}^{(\alpha)}$ given by the Virasoro group element. 
Let us use Lemma \ref{virtos} to substitute the translation operators in \eqref{transn} with the equivalent elements of the Virasoro group,
\be\label{eoe}
\exp\left({\hbar^{-1} \sum_{j=1+\alpha}^\infty \tilde{v}_{2j+1}^{(\alpha)} \frac{\p}{\p t_{2j+1}}}\right)\cdot  \tau_{\alpha}({\bf t})=\widehat{V}^{(\alpha)}_\Delta
\cdot  \tau_{\alpha}({\bf t}).
\ee
Here the Virasoro group operators $\widehat{V}^{(\alpha)}_\Delta$ contain only even generators $\widehat{L}_{2k}$. By Definition \ref{def1} these operators can be described by the formal series 
\be
f_\Delta^{(\alpha)}(z)
=\Xi[\widehat{V}^{(\alpha)}_\Delta ].
\ee
Here from \eqref{vtilde1} we have
\begin{equation}\label{deltf}
\begin{split}
f_\Delta^{(0)}(z)=Y(z), \quad \quad
\frac{f_\Delta^{(1)}(z)^3}{3}= \int_0^z Y(\eta) \eta \hbox{d}\eta,
\end{split}
\end{equation}
where
\be\label{Y}
Y(z)=\frac{1}{2}\left( {\mathtt y}_1(h(z))-{\mathtt y}_1(h(-z))\right)
\ee
is the odd part of ${\mathtt y}_1(h(z))$.
To derive the first equation we use
\be
{\mathtt y}_1(h(z))=\int_0^{h(z)} \frac{f(z) \hbox{d} f(z)}{\eta}=\int_0^z \frac{\eta \hbox{d}\eta}{h(\eta)}.
\ee

Combining \eqref{eoe} with \eqref{le} one gets
\be\label{grel}
\tau_{q,p}^{(\alpha)}({\bf t})=\widehat{V}_{q,p}^{(\alpha)} \cdot  \tau_{\alpha}({\bf t}),
\ee
where
\be
\widehat{V}_{q,p}^{(\alpha)}=\widehat{V}_{q,p} \, \widehat{V}^{(\alpha)}_\Delta \in {\mathcal V}_+
\ee
is an element of the Virasoro group.

Let us find a different, more convenient factorization of this Virasoro group element $\widehat{V}_{q,p}^{(\alpha)} $.
For $p\neq 0$ and $q \neq 0$ we introduce
\begin{equation}\label{hnew}
\tilde{h}(z)=\frac{p}{q\sqrt{p+q}}\frac{\exp\left(2\frac{q}{\sqrt{p+q}}z\right)-1}{1-\exp\left(-2\frac{p}{\sqrt{p+q}}z\right)}-\frac{1}{\sqrt{p+q}}.
\end{equation}
For $p=0$ it degenerates to
\be
\tilde{h}(z)=\frac{\exp\left(2\sqrt{q}z\right)-1}{2qz}-\frac{1}{\sqrt{q}}
\ee
and for $q=0$ it degenerates to
\be
\tilde{h}(z)=\frac{2z}{1-\exp\left(-2\sqrt{p}z\right)}-\frac{1}{\sqrt{p}}.
\ee
Inverse function is given by the series
\begin{equation}
\begin{split}\label{tildef}
\tilde{f}(z)&=z-\frac{1}{3}\,{\frac { p+2\,q}{\sqrt {p+q}}} z^{2}+{
\frac { 2\,{p}^{2}+5\,pq+5\,{q}^{2} }{9(p+q)}} z^{3}
-\frac{2}{135}{\frac {11\,{p}^{3}+39\,{p}^{2}q+51\,p{q}^{2}+34
\,{q}^{3}}{(p+q)^{3/2}}}z^{4}\\
&+{\frac {52\,{p}^{4}+236\,q{p}^{3}+429\,{p}^{2}{q}^{2}+386\,p{q}^{3}+
193\,{q}^{4}}{405\, \left( p+q \right) ^{2}}}z^5+O(z^6).
\end{split}
\end{equation}
It is easy to see that both $\tilde{h}(z)$ and $\tilde{f}(z)$ are formal series in $z$ of the form $z+ \sum_{k=2}^\infty\frac{c_k}{(p+q)^\frac{k-1}{2}} z^k$, where $c_k\in {\mathbb Q}[q,p]$ is a polynomial of degree $k-1$. Set
\be
\widehat{\tilde{V}}=\Xi^{-1}(\tilde{f}(z)).
\ee
Consider an element of the Virasoro group $\widehat{\tilde{V}}^{-1}\widehat{V}_{q,p}$.
Then
\be\label{hc}
\Xi[ \widehat{\tilde{V}}^{-1}\widehat{V}_{q,p}]=
f(\tilde{h}(z))
\ee
is a composition of \eqref{hnew} and \eqref{fpq1}. Let the coefficients ${\bf b}^{(\alpha)}$ be given by
\be\label{tib}
\exp\left({\sum_{k\in{\mathbb Z}_{>0}} {b}_k^{(\alpha)} \widehat{L}_k} \right)=
\widehat{\tilde{V}}^{-1}\,\widehat{V}_{q,p}\,
\widehat{V}^{(\alpha)}_\Delta,
\ee
then the Virasoro group element $\widehat{V}_{q,p}^{(\alpha)}$ can be factorized as
\be\label{gvir1}
\widehat{V}_{q,p}^{(\alpha)}=\widehat{\tilde{V}} \exp\left({\sum_{k\in{\mathbb Z}_{>0}} {b}_k^{(\alpha)} \widehat{L}_k} \right).
\ee
We have
\be
f(z,{\bf {b}^{(\alpha)}})=\Xi(\widehat{\tilde{V}}^{-1}\,\widehat{V}_{q,p}\,
\widehat{V}^{(\alpha)}_\Delta) =f_\Delta^{(\alpha)}(f(\tilde{h}(z))). 
\ee
The reason to introduce the Virasoro group element $\widehat{\tilde{V}}$ is the following 
\begin{lemma}
The function $\Xi[\widehat{\tilde{V}}^{-1}\widehat{V}_{q,p}]$ given by \eqref{hc} is inverse to $Y(z)$, given by \eqref{Y}, or
\be\label{Yinv}
f(\tilde{h}(Y(z)))=z.
\ee
\end{lemma}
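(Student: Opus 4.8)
The plan is to prove the equivalent identity $\tilde{h}(Y(z))=h(z)$, from which \eqref{Yinv} follows at once by applying $f$ and using $f\circ h=\mathrm{id}$. The only input I need about the pair $({\mathtt x},{\mathtt y}_1)$ is its exponential form: solving the explicit logarithmic formulas \eqref{fpq} and \eqref{y1} for $\log(1+\sqrt{p+q}\,z)$ and $\log(1+\tfrac{q}{\sqrt{p+q}}z)$ and exponentiating gives, with $s:=\sqrt{p+q}$,
\[
e^{q\,{\mathtt x}(z)+s\,{\mathtt y}_1(z)}=1+sz,\qquad e^{q\,{\mathtt x}(z)+\frac qs{\mathtt y}_1(z)}=1+\tfrac qs z .
\]
(These two are equivalent to \eqref{spp} together with one further independent relation.)

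Now comes the one structural point. Since ${\mathtt x}=f^2/2$ and $f(h(\pm z))=\pm z$, the two formal series $\zeta:=h(z)$ and $\zeta^{\star}:=h(-z)$ satisfy ${\mathtt x}(\zeta)={\mathtt x}(\zeta^{\star})=z^2/2$ — they are the two branches of ${\mathtt x}$ at its ramification point — and $Y(z)=\tfrac12\bigl({\mathtt y}_1(\zeta)-{\mathtt y}_1(\zeta^{\star})\bigr)$ by \eqref{Y}. I would evaluate the two displayed identities at $\zeta$ and at $\zeta^{\star}$ and take ratios: because ${\mathtt x}(\zeta)={\mathtt x}(\zeta^{\star})$ the common factor $e^{q{\mathtt x}}$ cancels, leaving two \emph{linear} relations between $\zeta$ and $\zeta^{\star}$,
\[
1+s\zeta=e^{2sY}\,(1+s\zeta^{\star}),\qquad 1+\tfrac qs\zeta=e^{\frac{2q}{s}Y}\bigl(1+\tfrac qs\zeta^{\star}\bigr).
\]
Solving this $2\times2$ system for $\zeta$ yields a closed rational expression $\zeta=\bigl((p+q)E_1-qE_2-pE_1E_2\bigr)\big/\bigl(sq\,(E_2-E_1)\bigr)$ with $E_1:=e^{2sY}$ and $E_2:=e^{\frac{2q}{s}Y}$; this is a well-defined formal power series in $z$ because $Y(z)=z+O(z^2)$.

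It then remains to check that $\tilde h(Y)$, computed from the explicit formula \eqref{hnew}, equals this expression. For that I would use $e^{-2pY/s}=E_2/E_1$ (which follows from $\tfrac qs-s=-\tfrac ps$), clear denominators, and verify the resulting identity in $E_1,E_2$ — a short algebraic manipulation. Comparing leading terms ($Y(z)\equiv z$ and $h(z)\equiv z$ modulo $z^2$) confirms that the solution one lands on is the branch $\zeta=h(z)$ rather than $\zeta^{\star}$. The degenerate cases $p=0$ and $q=0$ follow by passing to the limit in these formulas, or by rerunning the argument verbatim with the degenerate expressions for ${\mathtt x}$, ${\mathtt y}_1$ and $\tilde h$ recorded above. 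Conceptually the proof is just "${\mathtt x}$ is $2$-to-$1$ at its ramification point, so equating its two branches linearizes the monodromy relation for ${\mathtt y}_1$"; the only place that needs genuine care — and hence the only likely sticking point — is the bookkeeping of the $\sqrt{p+q}$-factors in this final algebraic comparison.
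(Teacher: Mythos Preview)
Your argument is correct. Both proofs reduce to the same algebraic identity, but they are organized differently. The paper works from the single spectral-curve relation \eqref{spp}, applies the chain of substitutions $z\mapsto h(z)$, $z\mapsto -z$, $z\mapsto f(z)$, then plugs in the explicit form of ${\mathtt x}(z)$ and carries out an unspecified ``straightforward computation'' to recognize $\tilde h$ on the left-hand side. You instead start from the \emph{pair} of exponential identities $e^{q{\mathtt x}+s{\mathtt y}_1}=1+sz$ and $e^{q{\mathtt x}+(q/s){\mathtt y}_1}=1+(q/s)z$ (together strictly more information than \eqref{spp} alone), evaluate at the two sheets $h(\pm z)$, take ratios to kill the common ${\mathtt x}$-factor, and solve the resulting $2\times2$ linear system for $h(z)$ in closed form; the match with \eqref{hnew} is then a one-line check. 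Your organization makes the role of the deck involution $z\mapsto -z$ completely explicit and replaces the paper's substitution chain by a clean linear-algebra step; the paper's route has the advantage of using only the single equation \eqref{spp} already recorded in the text. In either case the endgame is the same verification, and the $p=0$, $q=0$ limits are handled identically.
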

\begin{proof}
Assume that $p\neq 0$ and $q\neq 0$. Recall that ${\mathtt x}$ and ${\mathtt y}_1$ satisfy the equation \eqref{spp},
\be
p\exp\left(-q{\mathtt x}(z)\right)=(p+q)\exp\left(\frac{q}{\sqrt{p+q}}{\mathtt y}_1(z)\right)-q\exp\left(\sqrt{p+q}{\mathtt y}_1(z)\right).
\ee
Let us change the variable $z\mapsto h(z)$. We get
\be
p\exp\left(-qz^2/2\right)=(p+q)\exp\left(\frac{q}{\sqrt{p+q}}{\mathtt y}_1(h(z))\right)-q\exp\left(\sqrt{p+q}{\mathtt y}_1(h(z))\right),
\ee
or, if we change the sign of $z$,
\be
p\exp\left(-qz^2/2\right)=(p+q)\exp\left(\frac{q}{\sqrt{p+q}}{\mathtt y}_1(h(-z))\right)-q\exp\left(\sqrt{p+q}{\mathtt y}_1(h(-z))\right).
\ee
Making the inverse transformation $z\mapsto f(z)$, we have
\be
p\exp\left(-q{\mathtt x}(z)\right)=(p+q)\exp\left(\frac{q}{\sqrt{p+q}}\chi\right)-q\exp\left(\sqrt{p+q}\chi\right),
\ee
where we denote 
\be
\chi={\mathtt y}_1(h(-f(z))).
\ee

If we substitute the expression for ${\mathtt x}(z)$ here, we get
\be
p\frac{(1+\sqrt{p+q}z)^\frac{q}{p}}{\left(1+qz/\sqrt{p+q}\right)^\frac{p+q}{p}}=(p+q)\exp\left(\frac{q}{\sqrt{p+q}}\chi\right)-q\exp\left(\sqrt{p+q}\chi\right),
\ee
which, after a straightforward computation leads to the identity
\be
\frac{p}{q\sqrt{p+q}}\frac{\left(\frac{1+\sqrt{p+q}z}{1+qz/\sqrt{p+q}}\right)^\frac{q}{p}\exp\left(-\frac{q}{\sqrt{p+q}}\chi\right)-1}{1-\left(\frac{1+qz/\sqrt{p+q}}{1+\sqrt{p+q}z}\right)
\exp\left(\frac{p}{\sqrt{p+q}}\chi\right)}-\frac{1}{\sqrt{p+q}}=z.
\ee
The left hand side of this equation is equal to $\tilde{h}\left(\frac{1}{2}\left( {\mathtt y}_1(z)-\chi\right)\right)$;
therefore, we have 
\be
\tilde{h}\left(\frac{1}{2}\left( {\mathtt y}_1(z)-{\mathtt y}_1(h(-f(z)))\right)\right)=z.
\ee
If we apply $f(z)$ to the both sides of this equation, we have
\be
f\left(\tilde{h}\left(\frac{1}{2}\left( {\mathtt y}_1(z)-{\mathtt y}_1(h(-f(z)))\right)\right)\right)=f(z).
\ee
After the change of variables $z \mapsto h(z)$ we have
\be
f\left(\tilde{h}\left(\frac{1}{2}\left( {\mathtt y}_1(h(z))-{\mathtt y}_1(h(-z))\right)\right)\right)=z.
\ee
Then the statement of the lemma for $q\neq 0$ and $p \neq 0$ follows from the definition of $Y(z)$, \eqref{Y}.
Special cases with $p=0$ or $q=0$ can be considered similarly as the certain limits.
\end{proof}
From this lemma and \eqref{deltf} it follows that the Virasoro group elements \eqref{tib} are described by the functions
\begin{equation}\label{fbtild}
\begin{aligned}
f(z;{\bf {b}}^{(0)})&=Y(f(\tilde{h}(z)))=z,\\
\frac{f(z;{\bf {b}}^{(1)})^3}{3}&=\int_0^z \eta\, \hbox{d} {\mathtt x}(\tilde{h}(\eta)).
\end{aligned}
\end{equation}
In particular, ${\bf {b}}^{(0)}={\bf 0}$ and $\widehat{\tilde{V}}^{-1}\,\widehat{V}_{q,p}\,
\widehat{V}^{(0)}_\Delta=1$, therefore, $\widehat{V}_{q,p}^{(0)}=\widehat{\tilde{V}}$.

The last integral in \eqref{fbtild} can be computed explicitly as a series,
\be\label{fbb}
\frac{f(z;{\bf {b}}^{(1)})^3}{3}=\sum_{k=1}^{\infty}\frac{4^k B_{2k}}{(2k+1)!}\frac{(p+q)^{2k+1}-q^{2k+1}-p^{2k+1}}{pq(p+q)^k}z^{2k+1}.
\ee
Here $B_{2k}$ are the Bernoulli numbers defined by
\be
\frac{ze^z}{e^z-1}=1+\frac{z}{2}+\sum_{k=1}^\infty \frac{B_{2k}z^{2k}}{(2k)!}.
\ee

\begin{remark}
For $p=0$ we have
\be
\left.\frac{f(z;{\bf {b}}^{(1)})^3}{3}\right|_{p=0}=\frac{z^2}{\sqrt{q}}\coth(\sqrt{q}z)-\frac{z}{q^{\frac{5}{2}}}
\ee
which, up to inversion and rescaling of the variable, coincides with the function describing one of the components of the Virasoro group element in \cite[Section 2.2]{A1}. This observation indicates that the factorization \eqref{gvir1} is a generalization for $p\neq 0$ of the factorization of the group element considered in \cite{A1}.
\end{remark}

\begin{proposition}\label{cor1}
The Virasoro group element $\widehat{V}_{q,p}^{(\alpha)}$ by Definition \ref{def1} corresponds to the series 
\be
f_\alpha(z)=\Xi(\widehat{V}_{q,p}^{(\alpha)})=f(\tilde{f}(z);{\bf {b}^{(\alpha)}}),
\ee
where $f(z,{\bf {b}}^{(\alpha)})$ and  $\tilde{f}$ are given by  \eqref{fbtild} and the function inverse to \eqref{hnew}. We have
\be
f_0(z)=\tilde{f}(z),\quad \quad \frac{f_1(z)^3}{3}=\int_{0}^z \tilde{f}(\eta)  {\hbox{d}}  {\mathtt x}(\eta).
\ee
\end{proposition}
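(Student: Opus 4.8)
The plan is to read off $f_\alpha(z)=\Xi(\widehat{V}_{q,p}^{(\alpha)})$ directly from the factorization \eqref{gvir1}, and then make the two resulting series explicit with the help of \eqref{fbtild}. First I would write \eqref{gvir1} as $\widehat{V}_{q,p}^{(\alpha)}=\widehat{\tilde V}\cdot\widehat{V}_{{\bf b}^{(\alpha)}}$, where $\widehat{V}_{{\bf b}^{(\alpha)}}=\exp\bigl(\sum_{k>0}b_k^{(\alpha)}\widehat{L}_k\bigr)$. By construction $\widehat{\tilde V}=\Xi^{-1}(\tilde f(z))$, hence $\Xi(\widehat{\tilde V})=\tilde f(z)$, and by the notational convention of Definition \ref{def1} we have $\Xi(\widehat{V}_{{\bf b}^{(\alpha)}})=f(z;{\bf b}^{(\alpha)})$. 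Applying the composition rule $\Xi(\widehat V_1\widehat V_2)=f_2(f_1(z))$ from Definition \ref{def1} — so that the left factor $\widehat{\tilde V}$ becomes the inner function — gives at once
\be
f_\alpha(z)=\Xi(\widehat{V}_{q,p}^{(\alpha)})=f(\tilde f(z);{\bf b}^{(\alpha)}),
\ee
which is the first assertion.

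For $\alpha=0$ I would simply invoke the identity ${\bf b}^{(0)}={\bf 0}$ recorded right after \eqref{fbtild}: then $f(z;{\bf b}^{(0)})=z$, so $f_0(z)=\tilde f(z)$. For $\alpha=1$ I would start from $f(z;{\bf b}^{(1)})^3/3=\int_0^z\eta\,\hbox{d}{\mathtt x}(\tilde h(\eta))$, which is the second line of \eqref{fbtild}, and substitute $z\mapsto\tilde f(z)$ to obtain $f_1(z)^3/3=\int_0^{\tilde f(z)}\eta\,\hbox{d}{\mathtt x}(\tilde h(\eta))$. A formal change of variable $\eta=\tilde f(\zeta)$ then finishes the computation: since $\tilde f$ is the compositional inverse of $\tilde h$ (the series \eqref{tildef} is defined as the inverse of \eqref{hnew}), we have $\tilde h(\tilde f(\zeta))=\zeta$, so ${\mathtt x}(\tilde h(\eta))$ pulls back to ${\mathtt x}(\zeta)$ and the endpoints $0$ and $\tilde f(z)$ pull back to $0$ and $z$, yielding
\be
\frac{f_1(z)^3}{3}=\int_0^z\tilde f(\eta)\,\hbox{d}{\mathtt x}(\eta).
\ee

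I do not expect a genuine obstacle here — the proposition is essentially bookkeeping built on \eqref{gvir1} and \eqref{fbtild}. The only two points that call for a moment of care are the direction of composition in the first step, which is fixed by the contravariance of $\Xi$ (left group factors become inner functions), and the legitimacy of the substitution in the $\alpha=1$ computation. The latter is standard for formal power series once one notes that $\tilde f(\zeta)=\zeta+O(\zeta^2)$ and ${\mathtt x}(\tilde h(\eta))=\eta^2/2+O(\eta^3)$, so all the series involved are well-defined elements of $z\,{\mathbb C}[\![z]\!]$ and the change of variable preserves the formal integral.
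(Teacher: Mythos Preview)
Your proposal is correct and follows exactly the route the paper sets up: the paper does not give a separate proof of this proposition, treating it as an immediate consequence of the factorization \eqref{gvir1}, the composition rule $\Xi(\widehat V_1\widehat V_2)=f_2(f_1(z))$, and the explicit identities \eqref{fbtild}. Your filling-in of the $\alpha=1$ case via the substitution $\eta=\tilde f(\zeta)$ (using $\tilde h\circ\tilde f=\mathrm{id}$) is the natural computation and is carried out cleanly.
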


\subsection{Cut-and-join description of the triple Hodge integrals}

The cut-and-join description of the tau-function for the triple Hodge integrals satisfying the Calabi--Yau condition follows from the cut-and-join description for the KW and BGW tau-functions, see Section \ref{SHV}.
Let
\be
\widehat{{W}}_{\alpha}^{q,p}=\frac{1}{2\alpha+1} \sum_{k=0}^\infty  \widehat{J}_{-2k-1}^{\alpha}\left(\widehat{L}_{2k-2\alpha}^{\alpha}+\frac{\delta_{k,\alpha}}{8}\right).
\ee
Here
\begin{equation}
 \begin{aligned}
 \widehat{J}_{k}^{\alpha}&=\widehat{V}_{q,p}^{(\alpha)} \widehat{J}_{k} \left(\widehat{V}_{q,p}^{(\alpha)}\right)^{-1},\\
    \widehat{L}_{k}^{\alpha}&=\widehat{V}_{q,p}^{(\alpha)} \widehat{L}_{k} \left(\widehat{V}_{q,p}^{(\alpha)}\right)^{-1},
\end{aligned}
\end{equation}
are the conjugated operators of the Heisenberg--Virasoro algebra. By Lemma \ref{lemma32} they are given by the linear combinations
\begin{equation}
 \begin{aligned}
 \widehat{J}_{k}^{\alpha}& = \sum_{m=k}^\infty \rho_{\alpha}[k,m] \widehat{J}_{m},                                                     & &k\in {\mathbb Z},\\
    \widehat{L}_{k}^{\alpha}&= \sum_{m=k}^\infty \sigma_{\alpha}[k,m] \widehat{L}_{m}+c_\alpha\delta_{k, -2},   \quad \quad & &k\geq -2,
\end{aligned}
\end{equation}
where
\be
\rho_\alpha[k,m]=  [z^m] f_\alpha(z)^k,\quad \quad \sigma_\alpha[k,m]=  [z^{m+1}]\frac{f_\alpha(z)^{k+1}}{f'_\alpha(z)}
\ee
with $f_\alpha$ given by Proposition \ref{cor1} and
\be
c_\alpha=\left(\frac{\delta_{\alpha,1}}{90}-\frac{1}{18}\right)\frac{p^2+pq+q^2}{p+q}.
\ee

Because of a certain arbitrariness of the cut-and-join description of the tau-functions $\tau_\alpha$, see Remark \ref{remp}, we can also construct different versions of the cut-and-join operators for the tau-functions $\tau_{q,p}^{(\alpha)}$. For example,  consider the operators $\widehat{W}_\alpha^{*q,p}$ given by \eqref{Wpri} with $\widehat{J}_k^a$ substituted by $ \widehat{J}_{k}^{\alpha}$.

\begin{theorem}\label{T1}
\begin{equation}
\begin{split}\label{CAJqp}
 {\tau}_{q,p}^{(\alpha)}&=\exp\left(\hbar\, \widehat{W}_\alpha^{q,p}\right)\cdot 1\\
 &=\exp\left(\hbar\, {\widehat{W}_\alpha^{*q,p}}\right)\cdot 1.
\end{split}
\end{equation}
\end{theorem}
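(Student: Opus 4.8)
The plan is to read Theorem~\ref{T1} off the relation $\tau_{q,p}^{(\alpha)}=\widehat{V}_{q,p}^{(\alpha)}\cdot\tau_\alpha$ of \eqref{grel} by invoking the conjugation results of Section~\ref{S3.3}, which were set up precisely for this purpose. The first step is to check the one hypothesis they require: that $\widehat{V}_{q,p}^{(\alpha)}$ is a genuine element $\widehat{V}_{\bf a}$ of the pure Virasoro group with ${\bf a}$ \emph{independent of} $\hbar$. This holds because $\widehat{V}_{q,p}^{(\alpha)}=\widehat{V}_{q,p}\,\widehat{V}^{(\alpha)}_\Delta$ is a product of two pure Virasoro elements: $\widehat{V}_{q,p}=\exp(\sum_k a_k^{q,p}\widehat{L}_k)$ with $a_k^{q,p}$ rational in $p,q$ by \eqref{aa}, while $\widehat{V}^{(\alpha)}_\Delta$ is the $\hbar$-free element attached by Definition~\ref{def1} to the series $f^{(\alpha)}_\Delta$ of \eqref{deltf} — indeed the whole point of the rewriting running from \eqref{qptau} through \eqref{le} to \eqref{grel} via Lemma~\ref{virtos} is to trade the $\hbar^{-1}$-weighted translation for a genuine Virasoro rotation carrying no $\hbar$. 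By Proposition~\ref{cor1} the series associated to $\widehat{V}_{q,p}^{(\alpha)}$ is $f_\alpha$, with $f_0=\tilde f$ and $\tfrac13 f_1(z)^3=\int_0^z\tilde f(\eta)\,\hbox{d}{\mathtt x}(\eta)$.

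The second step is the application. Lemma~\ref{lemma3.3} with $\widehat{V}_{\bf a}=\widehat{V}_{q,p}^{(\alpha)}$ gives
\[
\tau_{q,p}^{(\alpha)}=\widehat{V}_{q,p}^{(\alpha)}\cdot\tau_\alpha
=\widehat{V}_{q,p}^{(\alpha)}\exp(\hbar\,\widehat{W}_\alpha)(\widehat{V}_{q,p}^{(\alpha)})^{-1}\cdot 1
=\exp\!\bigl(\hbar\,\widehat{V}_{q,p}^{(\alpha)}\widehat{W}_\alpha(\widehat{V}_{q,p}^{(\alpha)})^{-1}\bigr)\cdot 1,
\]
using the cut-and-join formula \eqref{cajkdv} for $\tau_\alpha$ and $\widehat{V}_{q,p}^{(\alpha)}\cdot 1=1$ (since $\widehat{L}_k\cdot 1=0$ for $k\ge 1$). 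It then remains to identify the conjugate with $\widehat{W}_\alpha^{q,p}$: expanding $\widehat{W}_\alpha$ as in \eqref{leadcaj} and conjugating term by term, Lemma~\ref{lemma32} yields $\widehat{V}_{q,p}^{(\alpha)}\widehat{J}_k(\widehat{V}_{q,p}^{(\alpha)})^{-1}=\sum_{m\ge k}\rho_\alpha[k,m]\widehat{J}_m$ and $\widehat{V}_{q,p}^{(\alpha)}\widehat{L}_k(\widehat{V}_{q,p}^{(\alpha)})^{-1}=\sum_{m\ge k}\sigma_\alpha[k,m]\widehat{L}_m+c_\alpha\delta_{k,-2}$, with $\rho_\alpha[k,m]=[z^m]f_\alpha^k$ and $\sigma_\alpha[k,m]=[z^{m+1}]f_\alpha^{k+1}/f_\alpha'$ exactly as in the statement, so the conjugate is the operator \eqref{newcaj} written in these variables, i.e. $\widehat{W}_\alpha^{q,p}$. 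Running the identical argument through Lemma~\ref{l35} with the alternative operators \eqref{Wpri} gives the second equality $\tau_{q,p}^{(\alpha)}=\exp(\hbar\,\widehat{W}_\alpha^{*q,p})\cdot 1$. Since $\widehat{W}_\alpha^{q,p}$ and $\widehat{W}_\alpha^{*q,p}$ are single operators the ordered exponential collapses to an ordinary one, matching the statement; and $\exp(\hbar\,\widehat{W}_\alpha^{q,p})\cdot 1$ is a well-defined formal $\hbar$-series because, by Proposition~\ref{prop1}, its $\hbar^k$-coefficient is homogeneous of bounded degree in $p,q,{\bf t}$, hence polynomial.

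The only real computation — and the step I expect to require the most care — is pinning down the central constant $c_\alpha$. By Lemma~\ref{lemma32} it equals $\tfrac12 a_2$, where $a_2$ is the $\widehat{L}_2$-coefficient of $\widehat{V}_{q,p}^{(\alpha)}$ in single-exponential form. I would extract it from the low-order expansion $f_\alpha(z)=z+[z^2]f_\alpha\,z^2+[z^3]f_\alpha\,z^3+\cdots$ together with the elementary dictionary $[z^2]f=-a_1$, $[z^3]f=a_1^2-a_2$ (obtained from the flows generated by ${\mathtt l}_1$ and ${\mathtt l}_2$), so that $a_2=([z^2]f_\alpha)^2-[z^3]f_\alpha$. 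For $\alpha=0$, $f_0=\tilde f$ and the needed coefficients are read off \eqref{tildef}; for $\alpha=1$ one substitutes $\tfrac13 f_1(z)^3=\int_0^z\tilde f(\eta)\,\hbox{d}{\mathtt x}(\eta)$ and uses \eqref{tildef} together with the series for $f$ coming from \eqref{xdd} (equivalently \eqref{fbb}). A short simplification keeping the half-integer powers of $p+q$ straight then gives $c_\alpha=\bigl(\tfrac{\delta_{\alpha,1}}{90}-\tfrac{1}{18}\bigr)\frac{p^2+pq+q^2}{p+q}$, completing the proof. Beyond this bookkeeping, the argument is a straight transcription of the machinery already assembled in Sections~\ref{S2} and \ref{S3.3}, so I anticipate no further obstacle.
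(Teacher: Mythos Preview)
Your proposal is correct and follows essentially the same route as the paper: the paper's proof simply cites Lemma~\ref{lemma3.3}, Lemma~\ref{l35}, and Proposition~\ref{cor1}, which is exactly the conjugation argument you spell out. Your additional discussion of the $\hbar$-independence of $\widehat{V}_{q,p}^{(\alpha)}$ and the extraction of the central constant $c_\alpha$ are useful elaborations of points the paper states without proof in the setup preceding the theorem, but they do not constitute a different approach.
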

\begin{proof}
The statement immediately follows from Lemma \ref{lemma3.3}, Lemma  \ref{l35}, and Proposition \ref{cor1}.
\end{proof}
Operators $ \widehat{W}_\alpha^{q,p}$ and $ {\widehat{W}_\alpha^{*q,p}}$ do not belong to the $\gl(\infty)$ algebra of KP hierarchy symmetries.

Expressions for the coefficients $\rho_\alpha$ and $\sigma_\alpha$ can be simplified. Namely, for $\alpha=0$, since  $f_0(z)=\tilde{f}(z)$, we have
\begin{equation}
\begin{split}\label{BGWdef}
\rho_0[k,m]&= [z^m]\tilde{f}(z)^k=\frac{1}{2 \pi i} \oint 
 \frac{\hbox{d} \tilde{h}(z)}{\tilde{h}(z)^{m+1}} z^k,\\
 \sigma_0[k,m]&= [z^{m+1}] \frac{\tilde{f}(z)^{k+1}}{\tilde{f}'(z)}= \frac{1}{2 \pi i} \oint \frac{\tilde{h}'(z)\hbox{d}\tilde{h}(z)}{\tilde{h}(z)^{m+2}} z^{k+1}.
\end{split}
\end{equation}
Both $(p+q)^{(m-k)/2}\rho_0[k,m]\in {\mathbb Q}[q,p]$ and $(p+q)^{(m-k)/2}\sigma_0[k,m]\in {\mathbb Q}[q,p]$ are polynomials of the total degree $m-k$.

Let us expand the cut-and-join operator
\be
\widehat{{W}}_{0}^{q,p}=\sum_{k=0}^\infty \widehat{{W}}_{0,1-k}^{q,p},
\ee
where $\deg \widehat{{W}}_{0,k}^{q,p}=k$. The leading term coincides with the cut-and-join operator for the BGW model (\ref{leadcaj}), $\widehat{{W}}_{0,1}^{q,p}=\widehat{W}_0$.
For the sub-leading term
\be
\widehat{{W}}_{0,0}^{q,p}=\sum_{k=0}^\infty\left(\rho_0[-2k-1,-2k]\widehat{J}_{-2k}\widehat{L}_{2k}+\sigma_0[2k,2k+1]\widehat{J}_{-2k-1}\widehat{L}_{2k+1}\right)
\ee
it is possible to find corresponding coefficients $\rho_0$ and $\sigma_0$ explicitly. From \eqref{BGWdef} and \eqref{hnew} for the non-negative $k$ we immediately get
\be
\rho_0[-2k-1,-2k]=\frac{2k+1}{3}\frac{p+2q}{\sqrt{p+q}},\quad\quad\sigma_0[2k,2k+1]=-\frac{2k-1}{3}\frac{p+2q}{\sqrt{p+q}}.
\ee
Therefore, we have
\begin{equation}
\begin{split}
\widehat{{W}}_{0,0}^{q,p}&=\frac{1}{3}\frac{p+2q}{\sqrt{p+q}}\sum_{k=0}^\infty\left((2k+1)\widehat{J}_{-2k}\widehat{L}_{2k}-(2k-1)\widehat{J}_{-2k-1}\widehat{L}_{2k+1}\right).
\end{split}
\end{equation}

In the same way for the next term we have
\begin{equation}
\begin{split}
\widehat{{W}}_{0,-1}^{q,p}&=\sum_{k=0}^\infty\left(\rho_0[-2k-1,-2k+1]\widehat{J}_{-2k+1}\left(\widehat{L}_{2k}+\frac{\delta_{k,0}}{8}\right)\right.\\
&\left.+\rho_0[-2k-1,-2k]\sigma_0[2k,2k+1]\widehat{J}_{-2k}\widehat{L}_{2k+1}+\sigma_0[2k,2k+2]\widehat{J}_{-2k-1}\widehat{L}_{2k+2}\right),
\end{split}
\end{equation}
where
\begin{equation}
\begin{split}
\rho_0[-2k-1,-2k+1]&=-\frac{2k+1}{9}\left((k-1)\frac{p^2}{p+q}+(4k-1)q\right),\\
 \sigma_0[2k,2k+2]&=\frac{1}{9}\left( \left( 2\,{k}^{2}+k-2 \right){\frac { {p}^{2}}{p+q}}+ \left( 8{k}^{2}-2k-2 \right) q\right).
\end{split}
\end{equation}

For $\alpha=1$ the coefficients $\rho_1$ and $\sigma_1$ can be expressed in terms of the residues of the combinations of functions $\tilde{h}(z)$ and $f(z;{\bf {b}}^{(1)})$, given by \eqref{hnew} and \eqref{fbb}, namely
\begin{equation}
\begin{split}
\rho_1[k,m]&= [z^m] f_1(z)^k=\frac{1}{2 \pi i} \oint 
 \frac{\hbox{d} \tilde{h}(z)}{\tilde{h}(z)^{m+1}}  f(z;{\bf {b}}^{(1)})^k,\\
 \sigma_1[k,m]&=   [z^{m+1}]\frac{f_1(z)^{k+1}}{f'_1(z)}= \frac{1}{2 \pi i} \oint \frac{\tilde{h}'(z)\hbox{d}\tilde{h}(z)}{\tilde{h}(z)^{m+2}}  \frac{f(z;{\bf {b}}^{(1)})^{k+1}}{f'(z;{\bf {b}}^{(1)})}.
\end{split}
\end{equation}

By construction, the cut-and-join operators $\widehat{W}_\alpha^{q,p}$ and $\widehat{W}_\alpha^{*q,p}$ are cubic in $\widehat{J}_k$.  Moreover, they do not depend on $\hbar$. Hence, the cut-and-join formulas (\ref{CAJqp}) describe the algebraic topological recursion. Namely, if we consider the topological expansion of the tau-functions
\be
 {\tau}_{q,p}^{(\alpha)}=\sum_{k=0}^\infty \hbar^{k} {\tau}_{q,p}^{(\alpha,k)},
\ee
then the coefficients of this expansion satisfy
\begin{equation}
\begin{split}
{\tau}_{q,p}^{(\alpha,k)}&=\frac{1}{k} \widehat{W}_\alpha^{q,p} \cdot {\tau}_{q,p}^{(\alpha,k-1)}\\
&=\frac{1}{k} \widehat{W}_\alpha^{*q,p} \cdot {\tau}_{q,p}^{(\alpha,k-1)}
\end{split}
\end{equation}
with the initial condition ${\tau}_{q,p}^{(\alpha,0)}=1$.
The operators $\widehat{W}_\alpha^{q,p} $ and $\widehat{W}_\alpha^{*q,p} $ contain infinitely many terms, however, only a finite number of them contribute to each step of recursion. Moreover, for a contributions with the fixed numbers of marked points we have a recursion similar to \eqref{fixn}.
In Appendix \ref{AA} we provide the first few terms $F_k^\alpha({\bf t})$ of the expansion of $\log{\tau}_{q,p}^{(\alpha)}$,
\be
{\tau}_{q,p}^{(\alpha)}({\bf t})=\exp\left(\sum_{k=1}^\infty \hbar^k F_k^\alpha({\bf t})\right)
\ee
obtained by this recursion with the cut-and-join operators $\widehat{W}_\alpha^{q,p}$ using Maple.

For $q=0$ or $p=0$ we get the cut-and-join description for the tau-function of the linear Hodge integrals.

\begin{remark}
Using this approach one can also restore other types of cut-and-join-like representations of the generating functions of the triple Hodge integrals. For example, it is possible to find a description similar to the one constructed in \cite{Lapl} for the KW tau-function.
\end{remark}

\begin{remark}
We believe that there is a universal distinguished choice of the cut-and-join operators associated to some natural representation theory interpretation. It may be related to the quantum Airy structures \cite{Boro1,Boro2}. 
\end{remark}

\section{KdV reduction}\label{S5}

For $p=-2q$ the tau-functions $\tau^{(\alpha)}_{q,p}$ do not depend on even times, hence, they are solutions of the KdV hierarchy, a 2-reduction of the KP hierarchy \cite{H3_1}. These tau-functions have particularly nice properties, and can be identified with the KW and BGW tau-functions with shifted arguments.
We will investigate them in this and the next sections. 
\begin{remark}
In \cite{DLYZ} it was shown that in this case the tau-function $\tau^{(1)}_{q,-2q}$ is related to the discrete KdV hierarchy (aka the
Volterra lattice hierarchy). We expect that $\tau^{(0)}_{q,-2q}$ is also related to the Volterra hierarchy in the similar way.
\end{remark}

\subsection{Translations and \texorpdfstring{$\kappa$}--classes}

With the forgetful map $\pi: \overline{\mathcal M}_{g,n+1} \rightarrow \overline{\mathcal M}_{g,n} $ we define the {\em Miller--Morita--Mumford tautological classes} \cite{Mumford}, $\kappa_k:= \pi_* \psi_{n+1}^{k+1} \in H^{2k}(\overline{\mathcal{M}}_{g,n},\mathbb{Q})$. According to Manin and Zograf \cite{MZ}, insertion of these classes into the intersection numbers \eqref{eq:products} can be described by the translation of the variables $t_k$ responsible for  the insertion of the $\psi$-classes. Let us consider the generating functions of the intersection numbers of $\psi$ and $\kappa$ classes with (for $\alpha=0$) and without (for $\alpha=1$) Norbury's $\Theta$-classes
\be
F_{g,n}^\alpha({\bf T}, {\bf s})=\sum_{a_1,\ldots,a_n\ge 0}\int_{\overline{\mathcal{M}}_{g,n}} \Theta_{g,n}^{1-\alpha} e^{\sum_{j=1}^\infty s_j \kappa_j}\psi_1^{a_1} \psi_2^{a_2} \cdots \psi_n^{a_n} \frac{\prod T_{a_i}}{n!}
\ee
and
\be
\tau_\alpha({\bf t}, {\bf s})=\left.\exp\left(\sum_{g=0}^\infty \sum_{n=0}^\infty \hbar^{2g-2+n}F_{g,n}^\alpha({\bf T}, {\bf s})\right)\right|_{T_k=(2k+1)!! t_{2k+1}}.
\ee

We introduce the polynomials $q_j({\bf s})$, defined by the generating function
\be\label{genss}
1-\exp\left(-\sum_{j=1}^\infty s_j z^j\right)=\sum_{j=1}^\infty q_j({\bf s})z^j,
\ee
and we put $q_k({\bf s})=0$ for $k<1$.
For $k \geq 1$ they are nothing but the negative of the {\em elementary Schur functions} 
\be
q_j({\bf s})=-p_j(-{\bf s}),
\ee
where
\be
\exp\left(\sum_{j=1}^\infty s_j z^j\right)=\sum_{j=0}^\infty p_j({\bf s})z^j.
\ee
Then the generating functions $\tau_\alpha({\bf t}, {\bf s})$ can be obtained from the generating functions $\tau_\alpha({\bf t})$ without $\kappa$-classes, see Section \ref{S31}, by translation
\begin{theorem}[\cite{MZ,NorbS}]\label{TMZN}
For $\alpha\in\{0,1\}$
\be
\tau_\alpha({\bf t}, {\bf s})=\tau_\alpha\left(\left\{ t_{2k+1}+\frac{1}{\hbar}\frac{q_{k-\alpha}({\bf s})}{(2k+1)!!}\right\}\right). 
\ee
\end{theorem}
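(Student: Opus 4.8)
The plan is to deduce Theorem~\ref{TMZN} from the Manin--Zograf description of $\kappa$-classes \cite{MZ,KMZ} together with its $\Theta$-refinement \cite{NorbS}, and then to repackage that description as a translation of the KdV times. The only geometric inputs needed are the standard behaviour of the tautological classes under the forgetful morphism $\pi:\overline{\mathcal M}_{g,n+1}\to\overline{\mathcal M}_{g,n}$, namely $\kappa_k=\pi_*\psi_{n+1}^{k+1}$, $\pi^*\kappa_k=\kappa_k-\psi_{n+1}^k$, and $\pi^*\psi_i=\psi_i-D_{i,n+1}$, together with Norbury's relation $\Theta_{g,n+1}=\psi_{n+1}\,\pi^*\Theta_{g,n}$ in the case $\alpha=0$.

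First I would establish, for all $g,n$ and all non-negative $a_1,\dots,a_n$, the comparison identity
\be
\int_{\overline{\mathcal M}_{g,n}}\Theta_{g,n}^{1-\alpha}\,e^{\sum_{j\geq 1}s_j\kappa_j}\prod_{i=1}^n\psi_i^{a_i}
=\sum_{m\geq 0}\frac{1}{m!}\int_{\overline{\mathcal M}_{g,n+m}}\Theta_{g,n+m}^{1-\alpha}\prod_{i=1}^n\psi_i^{a_i}\prod_{\ell=1}^m\Big(\sum_{j\geq 1}q_j({\bf s})\,\psi_{n+\ell}^{\,j+\alpha}\Big),
\ee
with $q_j({\bf s})$ as in \eqref{genss}. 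For $\alpha=1$ (no $\Theta$) this is precisely the Kaufmann--Manin--Zagier/Manin--Zograf formula: one writes $e^{\sum s_j\kappa_j}$ as a sum over the number $m$ of marked points ``restored'' by the forgetful map, uses $\pi^*\kappa_j=\kappa_j-\psi^j$ and the projection formula, and checks that $\psi\,(1-e^{-\sum_j s_j\psi^j})=\sum_{j\geq1}q_j({\bf s})\psi^{j+1}$ is the correct universal class carried by each restored point. The delicate part is keeping track of the boundary corrections $\pi^*\psi_i=\psi_i-D_{i,n+1}$ in the iterated pushforward; this is the classical combinatorial lemma, which I would quote from \cite{KMZ} (or \cite{MZ}) rather than reprove. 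For $\alpha=0$ the same computation applies once one observes that Norbury's relation contributes one extra factor $\psi_{n+\ell}$ at each restored point, so that $\pi_*\bigl(\Theta_{g,n+1}\psi_{n+1}^{a}\bigr)=\Theta_{g,n}\kappa_a$ and the universal class is lowered by one degree to $\sum_{j\geq1}q_j({\bf s})\psi^{j}$; the compatibility of these extra $\psi$-factors with the higher pullback corrections is the content of \cite{NorbS}. The unified exponent $j+\alpha$ in the display records exactly this shift.

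Second I would perform the generating-function translation. By construction $F_{g,n}^\alpha({\bf T},{\bf s})$ is weighted by $\hbar^{2g-2+n}$, so restoring a marked point raises $n$, hence the power of $\hbar$, by one; this explains the factor $\hbar^{-1}$ in the shift. Multiplying the comparison identity by $\prod_i T_{a_i}/n!$, summing over $g,n$ with the $\hbar$-weights, and using the binomial rearrangement of $\prod_i(T_{a_i}+C_{a_i})$ with $\tfrac1{N!}\binom{N}{n}=\tfrac1{n!m!}$, one sees that the sum over $m$ restored points carrying $\hbar^{-1}\sum_j q_j({\bf s})\psi^{j+\alpha}$ is exactly the Taylor expansion of the $\kappa$-free generating function around the shifted argument $T_k\mapsto T_k+\hbar^{-1}q_{k-\alpha}({\bf s})$, since $q_{k-\alpha}({\bf s})$ is the coefficient of $\psi^k$ in $\sum_{j\geq1}q_j({\bf s})\psi^{j+\alpha}$ (and $q_{<1}=0$ automatically kills the low-index entries). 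Imposing finally the Kontsevich normalization $T_k=(2k+1)!!\,t_{2k+1}$ turns this into $t_{2k+1}\mapsto t_{2k+1}+q_{k-\alpha}({\bf s})/\bigl(\hbar\,(2k+1)!!\bigr)$, which is the assertion.

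The main obstacle is the comparison identity itself --- more precisely, the bookkeeping of the corrections $\pi^*\psi_i=\psi_i-D_{i,n+1}$ through the iterated forgetful maps, and, in the $\Theta$-case, the verification that $\Theta_{g,n+1}=\psi_{n+1}\pi^*\Theta_{g,n}$ propagates correctly under that iteration. Both facts are available in \cite{MZ,KMZ} and \cite{Norb,NorbS}; everything downstream is the formal translation-of-times manipulation, which is routine. Accordingly, the shortest honest proof is to invoke \cite{MZ} for $\alpha=1$ and \cite{NorbS} for $\alpha=0$ and then carry out only the last, purely formal, generating-function step.
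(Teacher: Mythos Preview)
The paper does not give its own proof of Theorem~\ref{TMZN}; the result is quoted from \cite{MZ} (for $\alpha=1$) and \cite{NorbS} (for $\alpha=0$) and used as a black box. Your proposal is a correct and well-organised reconstruction of the arguments in those references: the comparison identity you state is exactly the Manin--Zograf formula (with Norbury's refinement $\Theta_{g,n+1}=\psi_{n+1}\pi^*\Theta_{g,n}$ supplying the degree shift $j+\alpha$ in the $\Theta$-case), and your generating-function bookkeeping of the $\hbar$-weight and the binomial rearrangement is the standard translation-of-times manipulation. The one point worth making explicit is why the $\psi_i$ on $\overline{\mathcal M}_{g,n+m}$ in your displayed identity may be identified with the pullbacks of the $\psi_i$ on $\overline{\mathcal M}_{g,n}$: this is because each restored point carries at least one factor of $\psi_{n+\ell}$ (literally for $\alpha=1$, and via $\Theta_{g,n+1}=\psi_{n+1}\pi^*\Theta_{g,n}$ for $\alpha=0$), and $\psi_{n+\ell}\cdot D_{i,n+\ell}=0$ kills the boundary correction in $\pi^*\psi_i=\psi_i-D_{i,n+\ell}$. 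You gesture at this (``keeping track of the boundary corrections''), but since it is precisely what makes the whole mechanism work it deserves a sentence rather than a citation.
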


\subsection{KdV reduction for triple Hodge integrals}

From the change of the variables \eqref{changeof} it is clear that for $p=-2q$ the tau-functions $\tau_{q,p}^{(\alpha)}({\bf t})$ do not depend on even variables $t_{2k}$, therefore they are tau-function of the KdV hierarchy. Let us put $q=-u^2$ with new parameter $u$, so that $p=2u^2$. This parametrization corresponds to the insertion of the triple Hodge class of the form $\Lambda(-2u^2)\Lambda(-2u^2)\Lambda(u^2)$.
\begin{remark} From Proposition \ref{prop1} it follows that the parameter $u$ is redundant. However, we will leave it as a free parameter to make the limit $u=0$ more transparent. 
\end{remark}
In this case the series $\tilde{f}(z)$, see \eqref{tildef}, have a simple explicit form
\begin{equation}
\begin{split}
\tilde{f}(z)&=\frac{1}{2u}\log\left(\frac{1+uz}{1-uz}\right)\\
&=\frac{1}{u}\arctanh(uz)\\
&=\sum_{k=0}^\infty\frac{u^{2k} z^{2k+1}}{2k+1}.
\end{split}
\end{equation}
It is an odd function. The Virasoro group operators $\widehat{V}^{(\alpha)}_{q,p}$ in \eqref{grel}, relating $\tau_{\alpha}$ to the tau-functions for the triple Hodge integrals, contain only even positive components of the Virasoro operators. Therefore, using Lemma \ref{virtos} one equivalently describes them by translation operators
\be
\tau_{-u^2,2u^2}^{(\alpha)}({\bf t})= \exp\left({\hbar^{-1} \sum_{j=1+\alpha} v_{2j+1}^{(\alpha)}(u) \frac{\p}{\p t_{2j+1}}}\right) \cdot  \tau_{\alpha}({\bf t}).
\ee
Let us find the coefficients $v_{2j+1}^{(\alpha)}(u)$ using Lemma \ref{virtos} and Proposition \ref{cor1}.

For $\alpha=0$ these coefficients are given by the generating function
\be
\sum_{k=1}^\infty v_{2k+1}^{(0)}(u)z^{2k+1}=z-\tilde{f}(z),
\ee
therefore,
\be\label{shift0}
v_{2k+1}^{(0)}(u)=-\frac{u^{2k}}{2k+1}.
\ee

By Theorem \ref{TMZN} this translation of the variables corresponds to the insertion of the $\kappa$-classes with parameters $s^0_k$ given by $q_k({\bf s}^0)=-(2k-1)!! u^{2k}$. From \eqref{genss} we have
\be\label{ss0}
\sum_{k=1}^\infty s_k^0 z^k=-\log\left(1+\sum_{k=1}^\infty(2k-1)!! \left(zu^2\right)^k\right).
\ee
For instance, $s_1^0=-u^2$, $s_2^0=-\frac{5}{2}u^4$, $s_3^0=-\frac{37}{3}u^6$.
In general, $s_k^0=-\frac{a_k}{k}u^{2k}$, where the sequence $a_k$ is described in \cite[A004208]{oeis}.

To find the coefficients for $\alpha=1$ we need the specialization of \eqref{xdd},
\be
 \hbox{d} {\mathtt x}= \frac{z dz}{1- u^2 z^2}.
\ee
From Proposition \ref{cor1} we have
\begin{equation}
\begin{split}
\frac{1}{3}f_1(z)^3&=\int_0^z \tilde{f}(\eta) \hbox{d} {\mathtt x}(\eta)\\
&=\frac{1}{u} \int_0^z\arctanh(u\eta) {\frac {\eta  \hbox{d} \eta}{1-u^2{\eta}^{2}}}\\
&=\frac{1}{2u^2}\int_0^z \eta \left( \frac{\p}{\p \eta}\arctanh(u\eta)^2\right)  \hbox{d} \eta.
\end{split}
\end{equation}
From Lemma \ref{virtos} we know that the coefficients satisfy
\be
\sum_{k=1}^\infty v_{2k+1}^{(1)}(u) z^{2k+1}=\frac{1}{3}\left(z^3-f_1(z)^3\right),
\ee
therefore,
\be\label{shift1}
v_{2k+3}^{(1)}(u) =-\frac{u^{2k}}{2k+3}\sum_{j=0}^k\frac{1}{2j+1}.
\ee
Here we use the series expansion of $\arctanh(z)^2$, see e.g. \cite[A004041]{oeis}.

Using Theorem \ref{TMZN} one can find the values of the parameters $s_k$, associated to these translations,
\begin{equation}\label{ss1}
\begin{split}
\exp\left(-\sum_{j=1}^\infty s_j^1 z^j\right)&=1-\sum_{k=1}^\infty (2k+3)!! v_{2j+3}^{(1)}(u) z^k\\
&=\sum_{k=0}^\infty (2k+1)!! \left(\sum_{j=0}^k\frac{1}{2j+1}\right)\left(zu^2\right)^k.
\end{split}
\end{equation}
In particular, $s_1^1=-4u^2$, $s_2^2=-15u^4$, $s_3^1=-\frac{316}{3}u^6$.

Therefore, for ${\bf s}^{\alpha}$ given by \eqref{ss0} and \eqref{ss1} we have the following relation between the generating functions of different types of intersection numbers:
\begin{theorem}\label{taueq}
The generating functions of the triple Hodge integrals satisfying Calabi--Yau condition for $p=-2q=2u^2$ coincide with the generation functions of the intersection numbers containing $\kappa$ classes, 
\be\label{kdvid}
\tau^{(\alpha)}_{-u^2,2u^2}({\bf t})=\tau_\alpha({\bf t},{\bf s^\alpha}).
\ee
\end{theorem}
Here the tau-functions $\tau^{(\alpha)}$ are the generating functions of triple Hodge integrals satisfying Calabi--Yau condition \eqref{tau^alpha} in the variables \eqref{changeof}.

\begin{remark}
Geometric interpretation of this identity is not clear yet. It would be interesting to find if it follows from the relations between the cohomology classes on ${\overline{\mathcal{M}}_{g,n}}$.
\end{remark}

\begin{remark}
After this paper was uploaded to arXiv, Theorem \ref{taueq} for $\alpha=1$ was proven using a different approach in \cite{YZ}.
\end{remark}

This theorem implies that for investigation of the tau-functions $\tau^{(\alpha)}_{-u^2,2u^2}({\bf t})$ we can use known properties of the KW and BGW tau-functions. In particular, the tau-functions $\tau^{(\alpha)}_{-u^2,2u^2}({\bf t})$ obey the Virasoro constraints, obtained from the constraints \eqref{VirC} by a translation of variables 
$t_k\mapsto t_k +\hbar^{-1}v_{2j+1}^{(\alpha)}(u)$. This basis in the space of constraints can be more convenient when one, constructed in Section \ref{S42}.

Let us consider the identification of the intersection numbers for the given genus $g$ and number of the marked points $n$. Using the change of variables \eqref{changeof} we can reformulate Theorem \ref{taueq} in terms of these intersection numbers. Namely, for $p=-2q=2u^2$ the change of the variables \eqref{changeof} is given by
\be
T^{-u^2,2u^2}_k({\bf t})=\sum_{m=1}^\infty m t_m\left(\frac{\p}{\p t_{m-2}}-u^2\frac{\p}{\p t_m}\right)T^{-u^2,2u^2}_{k-1}({\bf t})
\ee
with $T^{-u^2,2u^2}_0=t_1$. 
It is generated by 
\be
\frac{\p}{\p {\mathtt x}}=\frac{1-u^2z^2}{z} \frac{\p}{\p z}.
\ee
Namely, the functions \eqref{phik},
\be\label{Phi}
\Phi_k(z):=\left(-\frac{\p}{\p {\mathtt x}}\right)^k\cdot \frac{1}{z},
\ee
define a change of variables, if we associate $T_k=(2k+1)!! t_{2k+1}$ with $(2k-1)!! z^{-2k-1}$ and $ T^{-u^2,2u^2}_k$ with $\Phi_k(z)$. 

For $g,n\geq 0$ let us introduce the {\em correlation functions}
\be\label{corf}
W_{g,n}^\alpha(z_1,\dots,z_n)=\sum_{a_1,\dots,a_n\ge 0} \int_{\overline{\mathcal{M}}_{g,n}}
\Theta_{g,n}^{1-\alpha} \Lambda(-2u^2)\Lambda(-2u^2)\Lambda(u^2)\prod_{j=1}^n \psi_j^{a_j}
\Phi_{a_j}(z_j)
\ee
and
\be\label{corsh}
\tilde{W}_{g,n}^\alpha(z_1,\dots,z_n)=\sum_{a_1,\dots,a_n\ge 0} \int_{\overline{\mathcal{M}}_{g,n}}
\Theta_{g,n}^{1-\alpha} e^{\sum_{j=1}^\infty s_j^\alpha \kappa_j} \prod_{j=1}^n \psi_j^{a_j}
\left(-\frac{1}{z_j}\frac{\p}{\p z_j}\right)^{a_j} z_j^{-1}.
\ee
Then the following statement is equivalent to Theorem \ref{taueq}:
\begin{corollary}\label{cor}
For $g,n\geq 0$
\be\label{Wid}
W_{g,n}^\alpha(z_1,\dots,z_n)=\tilde{W}_{g,n}^\alpha(z_1,\dots,z_n).
\ee
\end{corollary}
For $u=0$ this relation is trivial. Note that for the non-stable cases with $2g-2+n\leq 0$ both ${W}_{g,n}^\alpha$ and $\tilde{W}_{g,n}^\alpha$ vanish identically.
For $n=0$ we have
\be\label{eqq}
\int_{\overline{\mathcal{M}}_{g}} \Theta_{g,0}^{1-\alpha} \Lambda_g (-2) \Lambda_g (-2) \Lambda_g (1)=\int_{\overline{\mathcal{M}}_{g}} \Theta_{g,0}^{1-\alpha}  e^{\sum_{j=1}^\infty s_j^\alpha \kappa_j}.
\ee
For $\alpha=1$ according to Faber and Pandharipande \cite{FP} one has
\be
\int_{\overline{\mathcal M}_{g}}  \Lambda_g (-q) \Lambda_g (-p) \Lambda_g (\frac{pq}{p+q})=\frac{1}{2}\left(\frac{q^2p^2}{q+p}\right)^{g-1}
\frac{B_{2g}}{2g}\frac{B_{2g-2}}{2g-2}\frac{1}{(2g-2)!}
\ee
for $g\geq 2$. 
For $p=-2q=2u^2$ it reduces to
\be
\int_{\overline{\mathcal M}_{g}}  \Lambda_g (-2u^2) \Lambda_g (-2u^2) \Lambda_g (u^2)=2^{2g-3}u^{6g-6}
\frac{B_{2g}}{2g}\frac{B_{2g-2}}{2g-2}\frac{1}{(2g-2)!}.
\ee
It would be interesting to derive this expression using the right hand side of \eqref{eqq}. 

\begin{remark}
Theorem \ref{taueq} and Corollary \ref{cor} have direct generalizations for the case where the triple Hodge class in \eqref{FF} is deformed by any combination of the Miller--Morita--Mumford tautological classes $e^{\sum_{j=1}^\infty s_j \kappa_j}$ with arbitrary $s_j$. Indeed, such a deformation corresponds to the translation of the time variables, it does not break the KdV integrability, and can be described by the corresponding (in general, different)
deformation by the Miller--Morita--Mumford tautological classes of the right hand sides in \eqref{kdvid} and \eqref{Wid}.
\end{remark}

\section{Tau-functions identification and topological recursion}\label{S6}
In this section, we will describe the Chekhov--Eynard--Orantin topological recursion for the triple Hodge integrals satisfying the Calabi--Yau condition and consider Theorem \ref{taueq} from the point of view of this recursion. 

\subsection{Chekhov--Eynard--Orantin topological recursion}

The Chekhov--Eynard--Orantin topological recursion is a universal construction, which allows to recover an infinite tower of correlation function starting from a (non necessarily globally defined) {\em spectral curve} $\Sigma$, two meromorphic differentials on it, $\hbox{d} x$ and $\hbox{d} y$,  and a canonical bilinear differential $B(z_1,z_2)$ for $z_1,z_2 \in \Sigma$. We call the set
\be
S=\left(\Sigma, B, x, y\right)
\ee
the {\em topological recursion data}.  Let us briefly review the formalism of topological recursion, see \cite{EO,EO1,CN} for more details.

In this paper we restrict ourselves to the spectral curves of genus zero, $\Sigma=\mathbb{C}\mathrm{P}^1$. 
Consider $\mathbb{C}\mathrm{P}^1$ with a fixed global coordinate $z$, then we can specify the spectral curve by the functions $x(z)$ and $y(z)$. Consider  the differential $\hbox{d} x $  with an assumption that $\hbox{d} x$ is a rational differential with simple critical points $p_1,\dots,p_N$. On $\mathbb{C}\mathrm{P}^1$ the canonical bilinear differential is the Cauchy differentiation kernel
\be\label{Cauch}
B_C(z_1,z_2)=\frac{\hbox{d} z_1 \hbox{d} z_2}{(z_1-z_2)^2}.
\ee
We assume that the function $y$ is meromorphic in the neighbourhoods of the zeros of $\hbox{d} x $. We allow the spectral curve to be {\em irregular} \cite{CN}, that is, the poles of $y$ may coincide with the zeroes of  $\hbox{d} x$.

With this input, on the Cartesian product $\Sigma^{n}$ we construct a system of symmetric differentials (or {\em correlators}) $\omega_{g,n}$, $g\geq 0$, $n\geq 1$,  given by 
\begin{align}
	& \omega_{0,1}(z_1) =  y(z_1)  \hbox{d} x (z_1) ; 
	\\ \notag
	& \omega_{0,2}(z_1,z_2) = B_C(z_1,z_2);
\end{align}
and for $2g-2+n>0$ we use the recursion
\begin{align}\label{TR}
	\omega_{g,n} (z_1,\dots,z_n) \coloneqq \ & \frac 12 \sum_{i=1}^N \res_{z\to p_i} 
	\frac{\int^z_{\sigma_i(z)} B_C(z_1,\cdot)} {  \omega_{0,1}(z_1) -\omega_{0,1}(\sigma_i(z_1))  }\Bigg(
	\omega_{g-1,n+1}(z,\sigma_i(z),z_{\llbracket n \rrbracket \setminus \{1\}})
	\\
	\notag
	& + \sum_{\substack{g_1+g_2 = g, I_1\sqcup I_2 = {\llbracket n \rrbracket \setminus \{1\}} \\
			(g_1,|I_1|),(g_2,|I_2|) \not= (0,0) }} \omega_{g_1,1+|I_1|}(z,z_{I_1})\omega_{g_2,1+|I_2|}(\sigma_i(z), z_{I_2})\Bigg),
\end{align}
where  $\sigma_i$ is the deck transformation of $x$ near $p_i$, $i=1,\dots, N$. For the stable cases $2g-2+n>0$ these differentials are meromorphic. We wouldn't go into the discussion of this version of this topological recursion, for more detail e.g. \cite{EO}. As the spectral curve $\Sigma=\mathbb{C}\mathrm{P}^1$ and the canonical bilinear differential $B=B_C$ are the same for all considered cases, we will denote the topological recursion data by $S=\left( x(z), y(z)\right)$, and also call it the spectral curve. 

The $g$th {\em symplectic invariant} for $g\geq 2$ associated to the spectral curve $S=\left( x, y\right)$ is defined by
\be
F^{(g)}=\frac{1}{2-2g} \sum_{i=1}^N \res_{z\to p_i}  \Phi(z) \omega_{g,1}(z),
\ee
where $ \Phi(z)=\int^z  y  \hbox{d} x  $ is a primitive of $y(z)  \hbox{d} x(z)$. Definitions of $F^{(0)}$ and $F^{(1)}$ can be found in \cite{EO}. We will identify $\omega_{g,0}=F^{(g)}$. All correlators $\omega_{g,n}$
 with $ 2- 2g- n < 0$ are called {\em stable}, and the others are called unstable.

The symplectic invariants are expected to be invariant under the symplectic transformations which preserve the symplectic form $ \hbox{d}x\wedge  \hbox{d}y$. However, the invariance under the exchange transformation $x \leftrightarrow y$ is still challengeable. It is known that in general the correlators $\omega_{g,n}$ are not invariant under the  symplectic transformations. Nevertheless, there are some symplectic transformations which leave them invariant. Let us consider an interesting class of such transformations. 

Namely, consider the transformation \cite{EO1} of the form 
\be\label{symptr}
\tilde{x}=g(x), \quad \quad \quad \tilde{y}=\frac{1}{g'(x)}y,
\ee
where $g(x)$ is an analytic function near the zeros of $\hbox{d} x$. This transformation preserves the symplectic form $ \hbox{d} x \wedge  \hbox{d} y= \hbox{d} \tilde{x} \wedge  \hbox{d} \tilde{y}$, therefore, it is symplectic. We assume that both spectral curves $S=(x,y)$ and $\tilde{S}=(\tilde{x},{\tilde{y}})$ are of genus zero. 
\begin{proposition}\label{Props}
If the zeroes of $ \hbox{d} \tilde{x}$ are simple and the set of these zeroes coincides with the set of the zeroes of $ \hbox{d}  x$, then for the two spectral curves $S=(x,y)$ and $\tilde{S}=(\tilde{x}, \tilde {y})$ we have
\be
\omega_{g,n} (z_1,\dots,z_n)=\tilde{\omega}_{g,n} (z_1,\dots,z_n)
\ee
for all stable cases.
\end{proposition}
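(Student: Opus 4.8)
The plan is to reduce everything to the observation that, once the curve $\Sigma=\mathbb{C}\mathrm{P}^1$ is fixed, the recursion \eqref{TR} for the stable correlators is assembled from exactly four pieces of data: the set of simple ramification points $\{p_1,\dots,p_N\}$ of $\hbox{d} x$, the local deck transformations $\sigma_i$ of $x$ near $p_i$, the one-form $\omega_{0,1}=y\,\hbox{d} x$, and the bilinear differential $\omega_{0,2}=B_C$. If each of these four is shown to be unchanged when one passes from $S=(x,y)$ to $\tilde S=(\tilde x,\tilde y)$ under the hypotheses of the proposition, then an induction on $2g-2+n>0$ immediately gives $\omega_{g,n}=\tilde\omega_{g,n}$ in all stable cases: the base cases $\omega_{0,3}$ and $\omega_{1,1}$ are produced by \eqref{TR} from $\omega_{0,1}$, $\omega_{0,2}$ and the kernel alone, and the general step expresses $\omega_{g,n}$ through the same data and through correlators of strictly smaller $2g-2+n$.

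The first step is the invariance of $\omega_{0,1}$, which is immediate from \eqref{symptr}: $\tilde\omega_{0,1}=\tilde y\,\hbox{d}\tilde x=\tfrac{y}{g'(x)}\,\hbox{d}(g(x))=\tfrac{y}{g'(x)}\,g'(x)\,\hbox{d} x=y\,\hbox{d} x=\omega_{0,1}$. The bilinear differential is unchanged because $\Sigma=\mathbb{C}\mathrm{P}^1$ with its fixed global coordinate $z$ is unchanged, so $\omega_{0,2}=B_C$ of \eqref{Cauch} for both curves. The one genuinely subtle point, and the step I expect to be the main obstacle, is the ramification data. By hypothesis the zeroes of $\hbox{d}\tilde x=g'(x)\,\hbox{d} x$ are simple and coincide with the zeroes $p_1,\dots,p_N$ of $\hbox{d} x$. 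I claim this forces $g'$ not to vanish at any value $x(p_i)$: in a local coordinate $t=z-p_i$, simplicity of the zero of $\hbox{d} x$ at $p_i$ means $x=x(p_i)+a t^2+O(t^3)$ with $a\ne0$; if $g'$ vanished to order $k\ge1$ at $x(p_i)$, then $g'(x)=O\big((x-x(p_i))^k\big)=O(t^{2k})$ and hence $\hbox{d}\tilde x=g'(x)\,\hbox{d} x$ would vanish to order $2k+1\ge3$ at $p_i$, contradicting simplicity. Therefore $g$ is biholomorphic in a neighbourhood of each $x(p_i)$; consequently near $p_i$ one has $\tilde x(z)=\tilde x(z')\iff x(z)=x(z')$, so the nontrivial local involution exchanging the two sheets of $\tilde x$ over $p_i$ equals $\sigma_i$. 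The same local biholomorphy shows $\tilde y=y/g'(x)$ differs from $y$ near $p_i$ by a nonvanishing holomorphic factor, so $\tilde y$ is still meromorphic there and $\tilde S$ remains admissible in the sense of \cite{CN}, the irregularity (if any) being preserved.

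With all four data coinciding, the recursion kernel in \eqref{TR} — a ratio whose numerator is the integral of $B_C(z_1,\cdot)$ along a small path from $\sigma_i(z)$ to $z$ and whose denominator is a difference of values of $\omega_{0,1}$ at points related by $\sigma_i$ — is literally the same for $S$ and $\tilde S$; note that the only global object entering it, $B_C$, together with the global variable $z_1$, is untouched, so the fact that $g$ is only defined near the $p_i$ causes no difficulty. Hence, by induction on $2g-2+n$, $\omega_{g,n}=\tilde\omega_{g,n}$ for all $2g-2+n>0$. In particular this covers the symplectic invariants $F^{(g)}=\omega_{g,0}$ for $g\ge2$: there one also uses that the primitive $\Phi=\int^z y\,\hbox{d} x$ coincides for the two curves up to an additive constant, which drops out of $\res_{z\to p_i}\Phi\,\omega_{g,1}$ because $\omega_{g,1}$ has vanishing residues. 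This proves Proposition \ref{Props}; the unstable invariants $F^{(0)}$ and $F^{(1)}$ fall outside the statement.
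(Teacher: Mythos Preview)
Your argument is correct and follows essentially the same route as the paper: both observe that $\omega_{0,1}=y\,\hbox{d} x$ and $\omega_{0,2}=B_C$ are unchanged, that the local deck transformations $\sigma_i$ coincide for $S$ and $\tilde S$, and hence that the recursion kernel in \eqref{TR} is literally the same, so induction on $2g-2+n$ gives the result. You supply two details the paper leaves implicit or outsources: a direct check that the simplicity hypothesis forces $g'(x(p_i))\neq 0$ (the paper tacitly uses this when writing $\tilde x(\sigma_i(z))=g(x(\sigma_i(z)))$ to conclude $\tilde\sigma_i=\sigma_i$), and a self-contained treatment of the $n=0$ case via $\Phi=\int y\,\hbox{d} x$ and the vanishing of residues of $\omega_{g,1}$, whereas the paper simply cites \cite{EO1} for that part.
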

\begin{proof} For $n=0$  the statement was established in \cite{EO1}. The canonical bilinear differentials for two spectral curves are the same and coincide with the Cauchy differentiation kernel \eqref{Cauch}. Since $x(\sigma_i(z))=x(z)$ in the neighbourhood of $p_i$, we have $\tilde{x}(\sigma_i(z))=g({x}(\sigma_i(z)))=\tilde{x}(z)$. Therefore, $\tilde{\sigma}_i(z)=\sigma_i(z)$ and
\be
 \omega_{0,1}(z_1) -\omega_{0,1}(\sigma_i(z_1)) =\tilde{ \omega}_{0,1}(z_1) -\tilde{\omega}_{0,1}(\tilde{\sigma}_i(z_1)). 
\ee
We see that the initial conditions and the kernel of the topological recursion for $S$ and $\tilde{S}$ are the same, which completes the proof.
\end{proof}

With a topological recursion data we associate a generating function made of only stable correlators with neglected unstable terms,
 \be
 \exp\left(\sum_{2g-2+n>0}\frac{\hbar^{2g-2+n}}{n!}\left.\omega_{g,n}(z_1,\dots,z_n)\right|_{ \Psi_k(z_i)=T_k}\right),
 \ee
 where $\Psi_k(z)= \hbox{d}\left(-\frac{\p}{\p x}\right)^kz^{-1}$.
 
From now on we consider only the case $N=1$, that is, the differential $\hbox{d} x$ has a unique zero.
The KW tau-function $\tau_1({\bf t})$ is described by the topological recursion on the Airy curve $x=\frac{1}{2}y^2$ \cite{EO1}
\be
S=\left(\frac{1}{2}z^2, z\right).
\ee
It was conjectured in \cite{KS2} and proven in \cite{DN} that the BGW tau-function $\tau_0({\bf t})$ is described by the topological recursion on the Bessel curve $xy^2=\frac{1}{2}$ with the topological recursion data
\be
S=\left( \frac{1}{2}z^2, \frac{1}{z}\right).
\ee
For these cases the stable correlators of topological recursion are related to the intersection numbers 
\be
\omega_{g,n}^{\alpha,0}(z_1,\dots,z_n)=\hbox{d}_1\dots \hbox{d}_n \sum_{a_1,\dots,a_n\ge 0} \int_{\overline{\mathcal{M}}_{g,n}}
\Theta_{g,n}^{1-\alpha}\prod_{j=1}^n \psi_j^{a_j}
\left(-\frac{1}{z_j}\frac{\p}{\p z_j}\right)^{a_j} z_j^{-1}.
\ee
for $\alpha=1$ and $\alpha=0$ respectively.

For these two tau-functions translation of variables ${\bf t}$ leads to the change of $y$ described by Proposition 4.8 of Chekhov--Norbury:
\begin{proposition}[\cite{CN}]
The generating function of the topological recursion data
\be
S=\left( \frac{1}{2}z^2, z^{2\alpha-1}+\sum_{k=2\alpha}^\infty y_k z^k\right)
\ee
is obtained via translation of the appropriate tau-function
\be\label{transl}
\tau_\alpha\left(\left\{ t_{2k+1}-\frac{1}{\hbar}\frac{y_{2k-1}}{2k+1}\right\}\right).
\ee
\end{proposition}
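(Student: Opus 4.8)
The plan is to prove the identity by a variational (deformation) argument for the topological recursion, transporting along the family of spectral curves $S_y=\bigl(\tfrac12 z^2,\ z^{2\alpha-1}+\sum_{k\ge 2\alpha}y_kz^k\bigr)$ from the undeformed curve $S_0$ (all $y_k=0$) to the general one. First I would record the base case, which is already stated above: for $y=z^{2\alpha-1}$ the topological recursion on $S_0$ produces $\tau_\alpha({\bf t})$, by the Airy-curve result of \cite{EO1} for $\alpha=1$ and the Bessel-curve result of \cite{KS2,DN,CN} for $\alpha=0$. Along $S_y$ the curve $\Sigma=\mathbb{C}\mathrm{P}^1$, the kernel $B=B_C$ and the form $\hbox{d}x=z\,\hbox{d}z$ (with its single simple zero at $z=0$ and deck involution $\sigma(z)=-z$) are all fixed, and the recursion \eqref{TR} feels $\omega_{0,1}$ only through $\omega_{0,1}(z)-\omega_{0,1}(\sigma(z))=2\bigl(z^{2\alpha-1}+\sum_j y_{2j+1}z^{2j+1}\bigr)z\,\hbox{d}z$; hence only the odd coefficients $y_{2j+1}$ can matter (the even ones will also be seen to drop out of the residue computation below, and from $F^{(g)}$ by a parity argument).

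The engine is the Eynard--Orantin variational formula for the dependence of the correlators on a coefficient of $y$, valid here also for the irregular Bessel curve \cite{CN}: deforming $\omega_{0,1}$ by $y_k z^k\,\hbox{d}x$, which is holomorphic at $z=0$, one has
\be
\frac{\p}{\p y_k}\,\omega_{g,n}(z_1,\dots,z_n)=\res_{w\to 0}\left[\left(\int_0^w \eta^{k}\,\hbox{d}x(\eta)\right)\,\omega_{g,n+1}(z_1,\dots,z_n,w)\right]
\ee
for the stable $\omega_{g,n}$ (and with $\omega_{g,1}$ replacing $\omega_{g,n+1}$ for $F^{(g)}$). Since $\int_0^w\eta^k\,\hbox{d}x(\eta)=w^{k+2}/(k+2)$, and since for $x=\tfrac12 z^2$ the stable correlators are, in each variable, finite combinations of $\hbox{d}\Phi_b(w)=-(2b+1)!!\,w^{-2b-2}\,\hbox{d}w$, the residue is zero unless $k=2j+1$, and then it extracts the $\hbox{d}\Phi_{j+1}$-component of $\omega_{g,n+1}$ with coefficient $-(2j+1)!!$. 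By the definition of the generating function and the normalization $T_k=(2k+1)!!\,t_{2k+1}$, that component is $\p_{T_{j+1}}$ of the $(n{+}1)$-point data; a short $\hbar$-degree count --- in which the only terms affected by the re-indexing are the ones with $2g-2+n=1$, and $\p_{T_{j+1}}$ kills those for the relevant $j$ --- then yields the commuting transport equations
\be
\frac{\p}{\p y_{2j+1}}\,\log Z_{S_y}=-\frac{(2j+1)!!}{\hbar}\,\frac{\p}{\p T_{j+1}}\,\log Z_{S_y},\qquad
\frac{\p}{\p y_{2j}}\,\log Z_{S_y}=0 .
\ee

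Integrating these (all are translations in the $T$-variables) from the base point, where $Z_{S_0}=\tau_\alpha$, gives $Z_{S_y}=\tau_\alpha\bigl(\{T_k-\tfrac{(2k-1)!!}{\hbar}\,y_{2k-1}\}\bigr)$, which after $T_k=(2k+1)!!\,t_{2k+1}$ becomes the shift $t_{2k+1}\mapsto t_{2k+1}-\tfrac1\hbar\tfrac{y_{2k-1}}{2k+1}$ --- the assertion. An alternative route would be to first identify, in the spirit of Mulase--Safnuk \cite{EMS}, the deformed-$y$ correlators with $\psi$--$\kappa$ intersection numbers and then invoke the translation Theorem \ref{TMZN}, matching the two changes of variables. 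I expect the main obstacle to be the precise justification of the variational formula --- its normalization, and above all the fact that the contributing residues all localize at $z=0$, which relies on the holomorphy of $y_k z^k\,\hbox{d}x$ there --- together with the bookkeeping of the $\hbar$-grading and the handling of the irregular Bessel spectral curve.
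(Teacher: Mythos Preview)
The paper does not actually prove this proposition: it is quoted verbatim as Proposition~4.8 of Chekhov--Norbury~\cite{CN} and used as a black box to obtain Corollary~\ref{cor11}. So there is no paper's own argument to compare against.

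Your variational approach is correct and is essentially the standard route. The computation of the residue checks out: with $x=\tfrac12 z^2$ the stable $\omega_{g,n}$ are polynomials in the $\Psi_b$'s, and $\res_{w\to 0}\bigl[\tfrac{w^{k+2}}{k+2}\Psi_b(w)\bigr]$ vanishes unless $k=2b-1$, in which case it equals $-(2b-1)!!$; the $\hbar$-bookkeeping then produces exactly the $-\tfrac1\hbar\tfrac{y_{2k-1}}{2k+1}$ shift after the change $T_k=(2k+1)!!\,t_{2k+1}$. Your handling of the boundary terms $(g,n+1)\in\{(0,3),(1,1)\}$ is also right: for the relevant range $j+1\ge\alpha+1$ the derivative $\partial_{T_{j+1}}$ kills those contributions (for $\alpha=0$ the $(0,3)$ term is absent altogether and $\omega_{1,1}^\Theta$ carries only $T_0$; for $\alpha=1$ both carry only $T_0,T_1$).

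As for the route taken in~\cite{CN}: their argument also proceeds by perturbing $y$ and uses that the recursion kernel at the unique simple branch point depends only on the odd part of $y$, so your sketch is very close in spirit to the original. The point you flag as the main obstacle --- validity and normalization of the Eynard--Orantin deformation formula for the irregular (Bessel) curve --- is precisely what~\cite{CN} supplies, so citing it there is appropriate rather than a gap. Your alternative route via $\kappa$-class translations (Theorem~\ref{TMZN}) would also work and is in fact how the paper subsequently connects the two descriptions in Section~\ref{S5}.
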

Note that only $t_{2k+1}$ with $k\geq \alpha+1$ are translated.
\begin{remark}
Actually, the proposition of Chekhov--Norbury is more general and describes $y=\sum_{k=2\alpha-1}^\infty y_k z^k$ with $y_{2\alpha-1}\neq 1$. 
We do not need this more general version here.
\end{remark}

\begin{corollary}\label{cor11}
The functions $\tau_\alpha({\bf t},{\bf s^\alpha})$ in Theorem \ref{taueq} are described by the topological recursion on the genus zero spectral curves $S_\alpha$ with
\be\label{SS1}
S_0=\left(\frac{1}{2}z^2, z^{-1}+\sum_{k=1}^\infty u^{2k} z^{2k-1}\right)=\left(\frac{1}{2}z^2,\frac{1}{z(1-u^2z^2)}\right)
\ee
and
\be\label{Scur}
S_1=\left( \frac{1}{2}z^2, z+\sum_{k=1}^\infty u^{2k} z^{2k+1}\sum_{j=0}^k\frac{1}{2j+1}\right)=
\left( \frac{1}{2}z^2,\frac{1}{2u(1-u^2 z^2)}\log\frac{1+u z}{1-u z}\right).
\ee
\end{corollary}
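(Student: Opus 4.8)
The plan is to deduce the Corollary directly from Theorem~\ref{taueq} and the Chekhov--Norbury proposition quoted above, the only work being to match two translations of the odd times and to sum the resulting power series. By Theorem~\ref{taueq} one has $\tau_\alpha({\bf t},{\bf s}^\alpha)=\tau^{(\alpha)}_{-u^2,2u^2}({\bf t})$, and by the discussion in Section~\ref{S5} --- Lemma~\ref{virtos} applied to the element $\widehat{V}^{(\alpha)}_{q,p}$ of \eqref{grel}, which for $p=-2q=2u^2$ involves only even positive Virasoro generators since $f_0=\tilde f$ and $f_1$ are odd --- this tau-function is the translation
\be
\tau^{(\alpha)}_{-u^2,2u^2}({\bf t})=\tau_\alpha\Bigl(\bigl\{\,t_{2k+1}+\hbar^{-1}v^{(\alpha)}_{2k+1}(u)\,\bigr\}\Bigr),
\ee
in which only the times $t_{2k+1}$ with $k\ge\alpha+1$ are shifted, the shift coefficients being given explicitly by \eqref{shift0} for $\alpha=0$ and by \eqref{shift1} for $\alpha=1$. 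On the other hand, the Chekhov--Norbury proposition identifies the generating function of the topological recursion on $S=\bigl(\tfrac12 z^2,\ z^{2\alpha-1}+\sum_{k\ge2\alpha}y_k z^k\bigr)$ with $\tau_\alpha\bigl(\{t_{2k+1}-\hbar^{-1}y_{2k-1}/(2k+1)\}\bigr)$. Comparing the two translations, the genus-zero spectral curve $S_\alpha=(\tfrac12 z^2,y(z))$ reproduces $\tau_\alpha({\bf t},{\bf s}^\alpha)$ exactly when $y$ has vanishing even-degree coefficients, leading coefficient $1$, and $z^{2k-1}$-coefficient equal to $-(2k+1)\,v^{(\alpha)}_{2k+1}(u)$ for every $k\ge\alpha+1$.

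It remains to carry out the substitution and recognise the series. For $\alpha=0$, \eqref{shift0} gives $-(2k+1)v^{(0)}_{2k+1}(u)=u^{2k}$, so the required $y$-component is
\be
z^{-1}+\sum_{k\ge1}u^{2k}z^{2k-1}=\frac1z\sum_{k\ge0}(u^2z^2)^k=\frac{1}{z(1-u^2z^2)},
\ee
which is exactly $S_0$ of \eqref{SS1}. For $\alpha=1$, re-indexing \eqref{shift1} by $m=k+1$ gives $-(2m+1)v^{(1)}_{2m+1}(u)=u^{2m-2}\sum_{j=0}^{m-1}(2j+1)^{-1}$, so the required $y$-component is
\be
\begin{aligned}
z+\sum_{k\ge1}u^{2k}z^{2k+1}\sum_{j=0}^k\frac{1}{2j+1}
&=z\sum_{k\ge0}(uz)^{2k}\sum_{j=0}^k\frac{1}{2j+1}
=\frac{z}{1-u^2z^2}\sum_{j\ge0}\frac{(uz)^{2j}}{2j+1}\\
&=\frac{\arctanh(uz)}{u(1-u^2z^2)}
=\frac{1}{2u(1-u^2z^2)}\log\frac{1+uz}{1-uz},
\end{aligned}
\ee
where I used the partial-sum identity $\sum_{k\ge0}w^{2k}\sum_{j=0}^k a_j=(1-w^2)^{-1}\sum_{j\ge0}a_jw^{2j}$ with $w=uz$, $a_j=(2j+1)^{-1}$, together with $\sum_{j\ge0}w^{2j}/(2j+1)=\arctanh(w)/w$ and $\arctanh(w)=\tfrac12\log\tfrac{1+w}{1-w}$. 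This is exactly $S_1$ of \eqref{Scur}.

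Finally, $S_\alpha$ satisfies the hypotheses of the Chekhov--Norbury proposition: $\Sigma=\mathbb{C}\mathrm{P}^1$, so it has genus zero; $\hbox{d} x=z\,\hbox{d} z$ has the single simple zero $z=0$, so $N=1$; the leading coefficient of $y$ is $1$; and for $\alpha=0$ the pole of $y$ at $z=0$ coinciding with the zero of $\hbox{d} x$ is admissible, since the curve is allowed to be irregular. The residual content is purely bookkeeping --- one must check that the times shifted in \eqref{shift0}, \eqref{shift1} are precisely the $t_{2k+1}$, $k\ge\alpha+1$, that the Chekhov--Norbury translation shifts, and then recognise the two elementary power series above. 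No new analytic input beyond Theorem~\ref{taueq} and the quoted proposition is needed; the only mildly delicate point is keeping the three indexings (the $\kappa$-variable shifts ${\bf s}^\alpha$, the Virasoro-group-to-translation conversion of Lemma~\ref{virtos}, and the Chekhov--Norbury translation) in step and correctly re-summing the $\arctanh$-type series.
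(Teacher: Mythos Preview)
Your proof is correct and follows the same route the paper intends: apply the Chekhov--Norbury proposition to the explicit translation coefficients $v^{(\alpha)}_{2k+1}(u)$ computed in \eqref{shift0}--\eqref{shift1}, then sum the resulting series in closed form. The paper leaves the corollary unproved as an immediate consequence, so you have simply written out the bookkeeping and the elementary summations that the paper omits; nothing is done differently.
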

The stable correlators of topological recursion for these curves are related to the functions \eqref{corsh} by
\be
\omega_{g,n}^{\alpha,u}=\hbox{d}_1\dots \hbox{d}_n \tilde{W}_{g,n}^\alpha(z_1,\dots,z_n).
\ee

Spectral curves in Corollary \ref{cor11} are described by the equations
\be
x(1-2u^2x)^2 y^2=\frac{1}{2}
\ee
and
\be
u^{-2}\left(1-e^{2u(1-2u^2x)y}\right)^2=2x\left(1+e^{2u(1-2u^2x)y}\right)^2
\ee
respectively.
Locally at $x=0$ their behavior is described by the Bessel and Airy curves correspondingly. For both curves the only zero of $\hbox{d} x =z \hbox{d} z$ is simple and is located at $z=0$. The curve for $\alpha=1$ is not algebraic, similar to the case of the Weil--Petersson volumes \cite{EO1}.

\subsection{Topological recursion for triple Hodge integrals}\label{STR}
In this section, we will focus on the case $\alpha=1$, the case  $\alpha=0$ is considered in Section \ref{S64}.
Topological recursion for the triple Hodge integrals of the form \eqref{Intin} for $\alpha=1$ is described by Bouchard--Mari\~{n}o conjecture \cite{BM}, which was proven by Chen and Zhou 
\cite{Chen,Zhou1}. In this section we remind the reader these results basically following \cite{Zhou1}. 

To formulate the results of Chen and Zhou let us describe the relation between the combination of the Hodge classes used in these papers and $\Lambda_g(p)$,
\be
\Lambda_g^\vee(a)=a^g\Lambda_g\left(-\frac{1}{a}\right).
\ee
Let
\be\label{newphik}
\phi_b(z)=\left(X\frac{\p}{\p X}\right)^b \frac{\frac{1}{z}-(a+1)}{(a+1)^2},
\ee
where
\be\label{X}
X=\frac{a^a}{(a+1)^{a+1}}\left(1-(a+1)z\right)\left(1+\frac{(a+1)z}{a}\right)^a.
\ee
Then $X$ and
\be
Y=\frac{a}{a+1}+z
\ee
satisfy the spectral curve equation 
\be
X=Y^a-Y^{a+1}.
\ee

Consider the topological recursion for the genus zero spectral curve
\be\label{TRi}
S=\left(\log X,\log Y\right)
\ee
and
\begin{align}
	& \omega_{0,1}(z_1) =  \log Y(z_1) \, \hbox{d} \log X (z_1) ; 
	\\ \notag
	& \omega_{0,2}(z_1,z_2) = B_C(z_1,z_2).
\end{align}
Then the stable correlators are related to the triple Hodge integrals satisfying the Calabi--Yau condition by  \cite{Chen,Zhou1}
\begin{multline}\label{oldcf}
\omega_{g,n}(z_1,\dots,z_n)\\=(-1)^n (a(a+1))^{g+n-1}\sum_{a_1,\dots,a_n\ge 0} \int_{\overline{\mathcal{M}}_{g,n}}
 \Lambda_g(-1)\Lambda_g\left(-\frac{1}{a}\right)\Lambda_g\left(\frac{1}{a+1}\right)\prod_{j=1}^n \psi_j^{a_j}
\hbox{d} \phi_{a_i}(z_i).
\end{multline}

Under the {\em rescaling} $y \mapsto \epsilon y$  the correlators of the topological recursion change as
\be
\omega_{g,n} \mapsto \epsilon^{2-2g-n} \omega_{g,n}.
\ee
Let us consider this transformation with $\epsilon=-\sqrt{a(a+1)}$ for the spectral curves \eqref{TRi}, then for the new spectral curve $S=\left(\log X,\log \tilde{Y}\right)$ the stable correlators \eqref{oldcf} acquire the form
\be\label{om}
\tilde{\omega}_{g,n}(z_1,\dots,z_n)=\sum_{a_1,\dots,a_n\ge 0} \int_{\overline{\mathcal{M}}_{g,n}}
 \Lambda_g(-1)\Lambda_g\left(-\frac{1}{a}\right)\Lambda_g\left(\frac{1}{a+1}\right)\prod_{j=1}^n \psi_j^{a_j}
\hbox{d} \tilde{\phi}_{a_i}(z_i).
\ee
Here  $\tilde{Y}=(Y)^{-\sqrt{a(a+1)}}$ and
\begin{equation}
\begin{split}
\tilde{\phi}_b(z)&=\sqrt{a(a+1)}{\phi}_b(z)\\
&=\left(X\frac{\p}{\p X}\right)^b 
\frac{\sqrt{a}}{(a+1)^\frac{3}{2}}\left(\frac{1}{z}-(a+1)\right).
\end{split}
\end{equation}

To identify the functions $\tilde{\phi}_k(z)$ with the coefficients $\Phi_{k}(z)$ in \eqref{phik} let us make the change of the variable 
\be
\frac{\sqrt{a}}{(a+1)^\frac{3}{2}}\left(\frac{1}{z}-(a+1)\right) \mapsto \frac{1}{z}.
\ee
Then $X(z)$ and $\tilde{Y}(z)$ get the form
\be\label{bigX}
X(z)=\frac{a^a}{(a+1)^{a+1}}\frac{\left(1+\sqrt{\frac{a+1}{a}}z\right)^a}{\left(1+\sqrt{\frac{a}{a+1}}z\right)^{a+1}}, \quad \quad \tilde{Y}=\left(\frac{a}{a+1}\frac{1+\sqrt{\frac{a+1}{a}}z}{1+\sqrt{\frac{a}{a+1}}z}\right)^{-\sqrt{a(a+1)}}.
\ee

To identify the triple Hodge class of \eqref{om} with the one in \eqref{FF} one should consider $q,p \in\left\{1,\frac{1}{a},-\frac{1}{a+1}\right\}$. Put $q=1$, $p=-\frac{1}{a+1}$ or $p=\frac{1}{a}$. Then \eqref{fpq} and \eqref{y1} are given by
\begin{equation}
\begin{split}
{\mathtt x}(z)&=(a+1)\log\left(1+\sqrt{\frac{a}{a+1}}z\right)-a\log\left(1+\sqrt{\frac{a+1}{a}}z\right),\\
{\mathtt y}_1(z)&=\sqrt{a(a+1)}\left(\log\left(1+\sqrt{\frac{a+1}{a}z}\right)-\log\left(1+\sqrt{\frac{a}{a+1}z}\right)\right).
\end{split}
\end{equation}
This pair is symplectically equivalent to the pair $\log X(z)$ and $\log \tilde{Y}(z)$,
\be
-\log X(z)= {\mathtt x}(z)-\log\left(\frac{a^a}{(a+1)^{a+1}}\right),\quad \quad -\log\tilde{Y}(z)={\mathtt y}_1(z)-\sqrt{a(a+1)}\log \frac{a+1}{a},
\ee
that is $\hbox{d}\log X \wedge  \hbox{d} \log\tilde{Y}=\hbox{d}  {\mathtt x}\wedge \hbox{d}  {\mathtt y}_1$. It is easy to see that for $\left(\log X,\log \tilde{Y}\right)$ and  $({\mathtt x},{\mathtt y}_1)$ all stable correlators of topological recursion coincide.
In particular, 
\be
\left(X\frac{\p}{\p X}\right)^b \frac{1}{z} = \left(-\frac{\p}{\p {\mathtt x}}\right)^k\cdot \frac{1}{z}.
\ee 
and the functions $\Phi_k(z)$ and $\phi_k(z)$, given \eqref{phik} and \eqref{newphik} coincide with each other.

For arbitrary $p$ and $q$ let us consider
\be\label{gencc}
\omega_{g,n}^{(\alpha),p,q}(z_1,\dots,z_n)=\hbox{d}_1\dots \hbox{d}_n\sum_{a_1,\dots,a_n\ge 0} \int_{\overline{\mathcal{M}}_{g,n}}
 \Lambda_g(-p)\Lambda_g\left(-q\right)\Lambda_g\left(\frac{pq}{p+q}\right)\prod_{j=1}^n \psi_j^{a_j}
\Phi_{a_i}(z_i),
\ee
where $\Phi_{a}$'s are given by \eqref{phik}.
If we transform $p\mapsto \epsilon^2 p$, $q\mapsto \epsilon^2 q$ and $z\mapsto \epsilon^{-1}z$, then ${\mathtt x}\mapsto \epsilon^{-2}{\mathtt x}$ and ${\mathtt y}_\alpha\mapsto \epsilon^{1-2\alpha}{\mathtt y}_{\alpha}$, therefore $\omega_{g,n}^{(\alpha),p,q}(z_1,\dots,z_n) \mapsto \epsilon^{(2\alpha+1)(2g-2+n)}\omega_{g,n}^{(\alpha),p,q}(z_1,\dots,z_n)$. Using this transformation for $\alpha=1$ we can substitute the values $q_0=1$ and $p_0=-\frac{1}{a+1}$ or $p_0=\frac{1}{a}$ in \eqref{om}
by arbitrary $q$ and $p$. Therefore, the tau-function $\tau^{(1)}_{q,p}$ and correlators $\omega_{g,n}^{(1),p,q}$ are governed by topological recursion with $S=({\mathtt x},{\mathtt y}_1)$. This spectral curve is given by the equation \eqref{spp}.


Let us put $p=-2q=2u^2$. Then for $\alpha=1$
 the triple Hodge classes in \eqref{gencc} coincide with the triple Hodge classes in \eqref{corf}, and for the stable cases with $n>0$ we have
\be
\omega_{g,n}^{(1),2u^2,-u^2}(z_1,\dots,z_n)=d_1\dots d_n W_{g,n}^1(z_1,\dots,z_n).
\ee
We have
\begin{equation}
\begin{split}
{\mathtt x}&=-\frac{1}{2u^2}\log\left(1-u^2z^2\right),\\
{\mathtt y}_1&=\frac{1}{2u}\log\frac{1+uz}{1-uz}.
\end{split}
\end{equation}

If we compare the curve  $S=({\mathtt x},{\mathtt y}_1)$ to the curve $S_1$ in \eqref{Scur} with $x= \frac{1}{2}z^2$ and $y=\frac{1}{2u(1-u^2 z^2)}\log\frac{1+u z}{1-u z}$, we have
\be
{\mathtt x}=g({x}), \quad \quad {\mathtt y}_1=\frac{1}{g'({x})}{y},
\ee
where $x=\frac{z^2}{2}$ and
\be\label{gfun}
g(x)=-\frac{1}{2u^2}\log(1-2u^2x).
\ee
We see that the curve $({\mathtt x},{\mathtt y}_1)$ is related to $({x},{y})$ by a symplectic transformation of the form \eqref{symptr}. 

Now from Proposition \ref{Props} it follows that stable correlators for the topological recursion data $S=({\mathtt x},{\mathtt y}_1)$ and \eqref{Scur} coincide.
This provides an independent proof of Corollary \ref{cor} for $\alpha=1$.

\subsection{Topological recursion for triple \texorpdfstring{$\Theta$}--Hodge integrals}\label{S64}

In this section, we will consider the Chekhov--Eynard--Orantin topological recursion for the triple $\Theta$-Hodge integrals satisfying the Calabi--Yau condition ($\alpha=0$). Namely, for arbitrary $q$ and $p$ we prove an analog of the Bouchard--Mari\~{n}o topological recursion for this case.

Let us consider the triple $\Theta$-Hodge integrals satisfying the Calabi--Yau condition for arbitrary $q$ and $p$. For $\omega_{g,n}^{(0),p,q}(z_1,\dots,z_n)$, given by \eqref{gencc}, we have
\begin{proposition}
Stable correlators  $\omega_{g,n}^{(0),p,q}$ are given by topological recursion for 
\begin{equation}
\begin{split}
S&=({\mathtt x},{\mathtt y}_0)\\
&=\left(\frac{p+q}{pq}\log\left(1+\frac{qz}{\sqrt{p+q}}\right)-\frac{1}{p}\log(1+\sqrt{p+q}z),\frac{1}{z}\right).
\end{split}
\end{equation}
\end{proposition}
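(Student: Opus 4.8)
The plan is to follow the strategy of the $\alpha=1$ case treated in Section~\ref{STR}, replacing the Bouchard--Mari\~{n}o recursion of Chen and Zhou --- which has no known $\Theta$-counterpart --- by the Bessel-curve description of the BGW tau-function together with the group-element relation \eqref{grel}. Concretely, I would start from two facts: first, $\tau_{q,p}^{(0)}=\widehat{V}_{q,p}^{(0)}\cdot\tau_0$ with $\tau_0=\tau_{BGW}$, where by Proposition~\ref{cor1} the element $\widehat{V}_{q,p}^{(0)}=\widehat{\tilde V}$ of the Virasoro group corresponds under Definition~\ref{def1} to $f_0=\tilde f$; and second, the result of \cite{KS2,DN} that $\tau_0$ is governed by topological recursion on the Bessel curve $\left(\tfrac12 z^2,\tfrac1z\right)$.

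The first step is to translate the action of the Heisenberg--Virasoro group on a KP tau-function into an operation on topological recursion data. A reparametrization of the global coordinate of $\mathbb{C}\mathrm{P}^1$ pulls the correlators $\omega_{g,n}$ back but does not change them as differentials; what it changes is the basis $\Psi_k(z)=\hbox{d}\left(-\frac{\p}{\p x}\right)^kz^{-1}$ in which the times $T_k$ are read off, and matching this change of basis with the Grassmannian action $e^{\sum a_k{\mathtt l}_k}$ of the operators in \eqref{virw} (as in Section~\ref{S1} and \cite{A1}) identifies it with the action of a Virasoro group element. Applying this to the Bessel curve and using the Lemma of Section~\ref{H3} stating $f(\tilde h(Y(z)))=z$ together with \eqref{deltf}, \eqref{Y} and \eqref{vtilde1}, $\widehat{V}_{q,p}^{(0)}\cdot\tau_0$ is the generating function attached to the curve with $x(z)=\tfrac12\,{\mathtt y}_1(z)^2$ and $y(z)=1/{\mathtt y}_1(z)$; equivalently, the primitive $\Phi(z)=\int^z y\,\hbox{d}x$ of $\omega_{0,1}$ for this curve equals ${\mathtt y}_1(z)$, which is the form of the statement that survives the symplectic move of the next step and agrees with the target curve by \eqref{ygr}. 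For $p=-2q$ this is consistent with Corollary~\ref{cor11}, since there $\tilde f={\mathtt y}_1=\frac1u\arctanh(uz)$.

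The second step removes the remaining discrepancy between this curve and $S=({\mathtt x},{\mathtt y}_0)$ by the symplectic transformation \eqref{symptr}, $\tilde x=g(x)$, $\tilde y=\tfrac1{g'(x)}y$, just as the proof of Corollary~\ref{cor} for $\alpha=1$ does at the end of Section~\ref{STR}. The function $g$ is forced by the two requirements $g\!\left(\tfrac12{\mathtt y}_1(z)^2\right)={\mathtt x}(z)$ and $g'\!\left(\tfrac12{\mathtt y}_1(z)^2\right){\mathtt y}_1(z)=z$; differentiating the first and substituting the second gives ${\mathtt y}_1'(z)={\mathtt x}'(z)/z$, which is exactly the defining relation \eqref{ygr} of ${\mathtt y}_1$, and one verifies that $g(x)=\int_0^{\sqrt{2x}}{\mathtt y}_1^{-1}(w)\,\hbox{d}w$ does the job because $g\!\left(\tfrac12{\mathtt y}_1(z)^2\right)=\int_0^z\eta\,{\mathtt y}_1'(\eta)\,\hbox{d}\eta=\int_0^z{\mathtt x}'(\eta)\,\hbox{d}\eta={\mathtt x}(z)$. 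Since both $\hbox{d}\tilde x=\hbox{d}{\mathtt x}$ and $\hbox{d}\!\left(\tfrac12{\mathtt y}_1^2\right)={\mathtt y}_1{\mathtt y}_1'\,\hbox{d}z$ have a unique, simple zero at $z=0$, Proposition~\ref{Props} applies, so the stable correlators of $S=({\mathtt x},{\mathtt y}_0)$ coincide with those of the curve of the first step, hence with the stable correlators attached to $\tau_{q,p}^{(0)}$.

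Finally I would unwind the identification of correlators with intersection numbers: the time convention $\Psi_k(z)=\hbox{d}\left(-\frac{\p}{\p{\mathtt x}}\right)^kz^{-1}$ built into topological recursion on $S=({\mathtt x},{\mathtt y}_0)$ is precisely $\hbox{d}\Phi_k(z)$ with $\Phi_k$ as in \eqref{phik}, so after the change of variables ${\bf T}^{q,p}({\bf t})$ relating $Z_{q,p}^{(0)}$ to $\tau_{q,p}^{(0)}$ one gets $\omega_{g,n}^{(0),p,q}=\hbox{d}_1\cdots\hbox{d}_n\,W_{g,n}^0$ for all stable cases, which is the assertion; specializing to $p=-2q$ recovers Corollary~\ref{cor}. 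The main obstacle is the first step --- pinning down exactly which reparametrization of $\mathbb{C}\mathrm{P}^1$ (with which half-density, i.e.\ dilaton-shift, twist) implements the group element $\widehat{V}_{q,p}^{(0)}$, and reconciling it with the Chekhov--Norbury description \cite{CN}, which records the same data by instead fixing $x=\tfrac12 z^2$ and moving $y$. Once that dictionary is established, the symplectic-invariance computation above and the final bookkeeping are routine.
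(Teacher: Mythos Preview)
Your route is genuinely different from the paper's. The paper does not attempt to read the Virasoro group element $\widehat{V}_{q,p}^{(0)}$ as a reparametrization of the spectral curve at all. Instead it invokes the Givental/DOSS/Chekhov--Norbury dictionary \cite{DOSS,CN}: the $\alpha=1$ curve $S=({\mathtt x},{\mathtt y}_1)$ is known from Section~\ref{STR}, and then a general principle proved in \cite{NP1} says that passing from a generating function to its $\Theta$-version replaces $y$ by $\partial y/\partial x$ on the spectral curve. Since $\partial{\mathtt y}_1/\partial{\mathtt x}={\mathtt y}_0$ by \eqref{ygr}, the proposition follows in one line. What the paper buys is brevity and a uniform mechanism (the same move works for any model with a $\Theta$-twin); what your approach would buy, if completed, is an argument internal to the present paper, not relying on the Givental formalism or the results of \cite{NP1}.

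That said, the gap you flag in your first step is real and, I think, more serious than your phrasing suggests. You assert that $\widehat{V}_{q,p}^{(0)}\cdot\tau_0$ is the generating function of the curve $\bigl(\tfrac12{\mathtt y}_1(z)^2,\,1/{\mathtt y}_1(z)\bigr)$, but this curve is just the Bessel curve in the coordinate $w={\mathtt y}_1(z)$, so its correlators \emph{as differentials} are the BGW ones; the only thing that changes is the time-basis $\Psi_k$. For your claim to hold you would need the change of time-basis induced by $z\mapsto{\mathtt y}_1(z)$ to coincide with the action of $\widehat{\tilde V}=\Xi^{-1}(\tilde f)$. But $\tilde f\neq{\mathtt y}_1$ in general --- by the lemma you cite, $\tilde f=Y\circ f$ with $Y$ the odd part of ${\mathtt y}_1\circ h$, and equality $\tilde f={\mathtt y}_1$ holds only at $p=-2q$ (which is why your check against Corollary~\ref{cor11} goes through there). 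So either the intermediate curve is misidentified for generic $(p,q)$, or you are implicitly using a non-geometric dictionary between Virasoro elements and reparametrizations that still has to be written down. Establishing that dictionary rigorously (including the half-density twist you mention) is precisely the content of the Givental/DOSS/CN machinery the paper chooses to cite rather than reprove; your second step, the symplectic move via Proposition~\ref{Props} with $g\bigl(\tfrac12{\mathtt y}_1^2\bigr)={\mathtt x}$, is correct once the first step is in place.
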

\begin{proof}
The statement follows from the results of Chekhov and Norbury \cite{CN} and almost literally repeats the proof of Theorem 12 in \cite{NP1}. Let us give only a brief outline of the proof. The generating function of the triple Hodge integrals $Z^{(1)}_{p,q}$ given by \eqref{GenfN} can be described by Givental's formalism. For $Z^{(0)}_{p,q}$, according to \cite[Proposition 3.9]{NP1}, we have a similar looking Givental description with $\tau_{1}$ substituted by $\tau_{0}$ and a different translation operator. The details of both Givental's descriptions are given in  \cite[Section 3]{H3_1}. 

By \cite{DOSS, CN} Givental's description is related to the topological recursion. The topological recursion for $\tau_{p,q}^{(1)}$ is described by $S=({\mathtt x},{\mathtt y}_1)$, see Section \ref{STR}. Then, according to \cite{NP1}, the $\Theta$-version of the same tau-function is described by substitution of ${\mathtt y}_1$ with $\frac{\p {\mathtt y}_1}{\p {\mathtt x}}$. From \eqref{ygr} we have
\be
\frac{\p {\mathtt y}_1}{\p {\mathtt x}}={\mathtt y}_0
\ee 
which completes the proof.

\end{proof}

For for $p=-2q=2u^2$ the stable correlators are related to \eqref{corf} with $\alpha =0$, 
\begin{equation}
\begin{split}\label{corn}
\omega_{g,n}^{(0),2u^2,-u^2}(z_1,\dots,z_n)&=d_1\dots d_n W_{g,n}^0(z_1,\dots,z_n)\\
\end{split}
\end{equation}
Then these correlators are given by topological recursion:
\begin{corollary}
The stable correlators $\omega_{g,n}^{(0),2u^2,-u^2}$ are given by the topological recursion with
\be\label{Sn}
S=\left(-\frac{1}{2u^2}\log(1-u^2 z^2), \frac{1}{z}\right).
\ee
\end{corollary}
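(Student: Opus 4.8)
The plan is to derive the statement as a direct specialization of the preceding proposition to the locus $p=-2q=2u^2$, together with an explicit evaluation of $\mathtt{x}$ there. First I would substitute $q=-u^2$, $p=2u^2$ into the formula \eqref{fpq} for $\mathtt{x}(z)$. Then $p+q=u^2$, $\sqrt{p+q}=u$, $pq=-2u^4$ and $q/\sqrt{p+q}=-u$, so the two logarithms combine to
\be
\mathtt{x}(z)=-\frac{1}{2u^2}\log(1-uz)-\frac{1}{2u^2}\log(1+uz)=-\frac{1}{2u^2}\log(1-u^2z^2),
\ee
which is exactly the expression already recorded in Section \ref{STR}. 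Since $\mathtt{y}_0(z)=1/z$ by definition, the topological recursion data $S=(\mathtt{x},\mathtt{y}_0)$ of the preceding proposition becomes precisely \eqref{Sn}, and the assertion about the stable correlators follows; the identification of $\omega_{g,n}^{(0),2u^2,-u^2}$ with the correlators $W_{g,n}^0$ of \eqref{corf} is recorded in \eqref{corn}.

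As a consistency check, and as an alternative self-contained route, I would also verify that the spectral curve \eqref{Sn} is obtained from the curve $S_0$ of \eqref{SS1} by a symplectic transformation of the type \eqref{symptr}, so that Proposition \ref{Props} applies verbatim, in exact parallel with the $\alpha=1$ argument of Section \ref{STR}. With $x=z^2/2$ and $g$ as in \eqref{gfun} one has $g(x)=-\frac{1}{2u^2}\log(1-2u^2x)=\mathtt{x}(z)$ and $g'(x)=(1-2u^2x)^{-1}=(1-u^2z^2)^{-1}$, hence
\be
\frac{1}{g'(x)}\cdot\frac{1}{z(1-u^2z^2)}=\frac{1}{z}=\mathtt{y}_0(z).
\ee
Since $\hbox{d}x=z\,\hbox{d}z$ has a unique simple zero at $z=0$, which is also the unique simple zero of $\hbox{d}\mathtt{x}$, Proposition \ref{Props} yields equality of all stable correlators for $S_0$ and \eqref{Sn}; combined with Corollary \ref{cor11} this gives the statement, and incidentally reproves Corollary \ref{cor} for $\alpha=0$.

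There is no real obstacle here: the substantive content is carried by the preceding proposition, whose proof rests on the $\Theta$-substitution $\mathtt{y}_1\mapsto\frac{\p \mathtt{y}_1}{\p \mathtt{x}}=\mathtt{y}_0$ in the Givental/topological-recursion dictionary of \cite{NP1,DOSS,CN}, while everything above is routine algebra. The only point meriting a word of care is the degenerate value $u=0$, for which both curves reduce to the Bessel curve $(\tfrac12 z^2,1/z)$ and the statement is the known description of $\tau_0$; note also that for $u\neq 0$ the parameters $p=2u^2$, $q=-u^2$ steer clear of the degenerations $p=0$ and $q=0$ of the earlier formulas, so \eqref{fpq} indeed applies.
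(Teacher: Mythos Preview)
Your primary argument---specializing the preceding Proposition to $p=2u^2$, $q=-u^2$ and evaluating $\mathtt{x}(z)$---is correct and is the obvious direct proof; the computation is clean and matches the value of $\mathtt{x}$ already recorded in Section~\ref{STR}.

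The paper, however, deliberately gives a different proof. It does \emph{not} specialize the Proposition; instead it uses Corollary~\ref{cor} (equivalently Theorem~\ref{taueq}, proved by KP/Virasoro methods in Section~\ref{S5}) as input: first $\omega_{g,n}^{(0),2u^2,-u^2}=\hbox{d}_1\cdots\hbox{d}_n\tilde W_{g,n}^0$ by Corollary~\ref{cor}, then Corollary~\ref{cor11} puts this on the curve $S_0$, and finally Proposition~\ref{Props} (your symplectic computation) carries $S_0$ to \eqref{Sn}. The point is to exhibit a proof independent of the Proposition for general $p,q$, whose argument rests on the Givental/DOSS/Chekhov--Norbury dictionary.

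Your ``alternative self-contained route'' is essentially the paper's proof, but your logical packaging is slightly off. By itself, the symplectic-invariance step together with Corollary~\ref{cor11} only shows that the stable correlators of $S_0$ and of \eqref{Sn} coincide and equal $\hbox{d}_1\cdots\hbox{d}_n\tilde W_{g,n}^0$; to reach $\omega_{g,n}^{(0),2u^2,-u^2}$ you must invoke Corollary~\ref{cor}. So the second route is not self-contained without that input, and your remark that it ``incidentally reproves Corollary~\ref{cor} for $\alpha=0$'' only holds if you \emph{also} feed in the first route (specialization of the Proposition); you cannot have it both ways. Either take Corollary~\ref{cor} as input (paper's organization) or use your first route and then deduce Corollary~\ref{cor} as a by-product---both are fine, just say which one you are doing.
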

\begin{proof}
This corollary has an independent proof, based on the symplectic invariance.
From Corollary \ref{cor} it follows that the stable correlators $\omega_{g,n}^{(0),2u^2,-u^2}$ are equal to $d_1\dots d_n \tilde{W}^0_{g,n}$. From Corollary \ref{cor11} these correlators are given by the topological recursion with the data 
\be\label{SS3}
S=\left(\frac{1}{2}z^2,\frac{1}{z(1-u^2z^2)}\right).
\ee
These $x$ and $y$ are symplectically equivalent to \eqref{Sn} for the symplectic transformation \eqref{symptr} with $g(x)$ given by \eqref{gfun},
therefore, the statement follows from Proposition \ref{Props}.
\end{proof}
The curve \eqref{Sn} is described by the equation
\be
u^{-2}y^2(1-e^{-2u^2 x})=1,
\ee
which for $u=0$ reduces to the Bessel curve $2xy^2=1$.

\appendix
\section{Topological expansion of  \texorpdfstring{$\log\tau_{q,p}^{(\alpha)}$}- } \label{AA}

\begin{flalign}
  \begin{aligned}
F_1^0&=\frac{1}{8}t_1,
\end{aligned}&&&
\end{flalign}
\begin{flalign}
  \begin{aligned}
F_2^0&=\frac{1}{16}\,{t_{{1}}}^{2}
-{\frac {1}{128}}\,{\frac {{p}^{2}+pq+{q}^{2}}{p+q}},
\end{aligned}&&&
\end{flalign}
\begin{flalign}
  \begin{aligned}
F_3^0&={\frac {9}{128}}\,t_{{3}}+\frac{1}{24}\,{t_{{1}}}^{3}
+{\frac {3}{64}}\,{\frac { \left( p+2\,q \right) t_{{2}}}{\sqrt {p+q}}}
-{\frac {1}{128}}\,{\frac { \left( 2\,{p}^{2}-pq-{q}^{2} \right) t_{{1
}}}{p+q}}
\end{aligned}&&&
\end{flalign}
\begin{flalign}
  \begin{aligned}
 F_4^0&={\frac {27}{128}}\,t_{{3}}t_{{1}}+\frac{1}{32}\,{t_{{1}}}^{4}
+{\frac {9}{64}}\,{\frac { \left( p+2\,q \right) t_{{1}}t_{{2}}}{\sqrt 
{p+q}}}
-{\frac {3}{128}}\,{\frac { \left( {p}^{2}-2\,pq-2\,{q}^{2} \right) {t
_{{1}}}^{2}}{p+q}}\\
&+{\frac {1}{512}}\,{\frac {{p}^{4}+2\,{p}^{3}q+3\,{p}^{2}{q}^{2}+2\,p{q
}^{3}+{q}^{4}}{ \left( p+q \right) ^{2}}}
\end{aligned}&&&
\end{flalign}
\begin{flalign}
  \begin{aligned}
F_5^0&={\frac {225}{1024}}\,t_{{5}}+{\frac {27}{64}}\,t_{{3}}{t_{{1}}}^{2}+\frac{1}{40}\,{t_{{1}}}^{5}
+{\frac {75}{256}}\,{\frac { \left( p+2\,q \right) t_{{4}}}{\sqrt {p+q}
}}+{\frac {9}{32}}\,{\frac { \left( p+2\,q \right) t_{{2}}{t_{{1}}}^{2
}}{\sqrt {p+q}}}\\
&+{\frac {3}{512}}\,{\frac {t_{{3}} \left( 4\,{p}^{2}+79\,pq+79\,{q}^{2}
 \right) }{p+q}}-{\frac {1}{64}}\,{\frac { \left( 2\,{p}^{2}-7\,pq-7\,
{q}^{2} \right) {t_{{1}}}^{3}}{p+q}}\\
&-{\frac {1}{512}}\,{\frac { \left( 22\,{p}^{3}+21\,{p}^{2}q-69\,p{q}^{
2}-46\,{q}^{3} \right) t_{{2}}}{ \left( p+q \right) ^{3/2}}}\\
&+{\frac {1}{1024}}\,{\frac { \left( 8\,{p}^{4}-6\,{p}^{3}q-5\,{p}^{2}{q
}^{2}+2\,p{q}^{3}+{q}^{4} \right) t_{{1}}}{ \left( p+q \right) ^{2}}}
\end{aligned}&&&
\end{flalign}
\begin{flalign}
  \begin{aligned}
F_6^0&={\frac {1125}{1024}}\,t_{{5}}t_{{1}}+{\frac {567}{1024}}\,{t_{{3}}}^{2
}+{\frac {45}{64}}\,t_{{3}}{t_{{1}}}^{3}+\frac{1}{48}\,{t_{{1}}}^{6}
+{\frac {375}{256}}\,{\frac { \left( p+2\,q \right) t_{{1}}t_{{4}}}{
\sqrt {p+q}}}\\
&+{\frac {189}{256}}\,{\frac { \left( p+2\,q \right) t_{{2
}}t_{{3}}}{\sqrt {p+q}}}+{\frac {15}{32}}\,{\frac { \left( p+2\,q
 \right) t_{{2}}{t_{{1}}}^{3}}{\sqrt {p+q}}}
+{\frac {3}{256}}\,{\frac { \left( 10\,{p}^{2}+229\,pq+229\,{q}^{2}
 \right) t_{{3}}t_{{1}}}{p+q}}\\
 &+{\frac {63}{256}}\,{\frac { \left( {p}^
{2}+4\,pq+4\,{q}^{2} \right) {t_{{2}}}^{2}}{p+q}}
-{\frac {5}{128}}\,{
\frac { \left( {p}^{2}-5\,pq-5\,{q}^{2} \right) {t_{{1}}}^{4}}{p+q}}\\
&
-{\frac {1}{512}}\,{\frac { \left( 110\,{p}^{3}-21\,{p}^{2}q-723\,p{q}
^{2}-482\,{q}^{3} \right) t_{{2}}t_{{1}}}{ \left( p+q \right) ^{3/2}}}\\
&+{\frac {1}{512}}\,{\frac { \left( 10\,{p}^{4}-35\,{p}^{3}q-11\,{p}^{2}
{q}^{2}+48\,p{q}^{3}+24\,{q}^{4} \right) {t_{{1}}}^{2}}{ \left( p+q
 \right) ^{2}}}\\
&-{\frac {1}{12288}}\,{\frac {17\,{p}^{6}+51\,{p}^{5}q+105\,{p}^{4}{q}^
{2}+125\,{p}^{3}{q}^{3}+105\,{p}^{2}{q}^{4}+51\,p{q}^{5}+17\,{q}^{6}}{
 \left( p+q \right) ^{3}}}
\end{aligned}&&&
\end{flalign}
\begin{flalign}
  \begin{aligned}
F_7^0&=
{\frac {55125\,t_{{7}}}{32768}}+{\frac {3375}{1024}}\,t_{{5}}{t_{{1}}}
^{2}+{\frac {1701}{512}}\,{t_{{3}}}^{2}t_{{1}}+{\frac {135}{128}}\,t_{
{3}}{t_{{1}}}^{4}+{\frac {1}{56}}\,{t_{{1}}}^{7}+{\frac {55125}{16384}}\,{\frac { \left( p+2\,q \right) t_{{6}}}{\sqrt 
{p+q}}}\\
&+{\frac {1125}{256}}\,{\frac { \left( p+2\,q \right) t_{{4}}{t_
{{1}}}^{2}}{\sqrt {p+q}}}+{\frac {567}{128}}\,{\frac { \left( p+2\,q
 \right) t_{{3}}t_{{2}}t_{{1}}}{\sqrt {p+q}}}+{\frac {45}{64}}\,{
\frac { \left( p+2\,q \right) t_{{2}}{t_{{1}}}^{4}}{\sqrt {p+q}}}\\
&+{\frac {1875}{32768}}\,{\frac { \left( 26\,{p}^{2}+173\,pq+173\,{q}^{2
} \right) t_{{5}}}{p+q}}+{\frac {9}{512}}\,{\frac { \left( 20\,{p}^{2}
+521\,pq+521\,{q}^{2} \right) t_{{3}}{t_{{1}}}^{2}}{p+q}}\\
&+{\frac {189}
{128}}\,{\frac { \left( {p}^{2}+4\,pq+4\,{q}^{2} \right) {t_{{2}}}^{2}
t_{{1}}}{p+q}}-{\frac {3}{128}}\,{\frac { \left( 2\,{p}^{2}-13\,pq-13
\,{q}^{2} \right) {t_{{1}}}^{5}}{p+q}}\\
&-{\frac {25}{4096}}\,{\frac { \left( 67\,{p}^{3}-387\,{p}^{2}q-1563\,p
{q}^{2}-1042\,{q}^{3} \right) t_{{4}}}{ \left( p+q \right) ^{3/2}}}\\
&-{
\frac {3}{512}}\,{\frac { \left( 110\,{p}^{3}-147\,{p}^{2}q-1101\,p{q}
^{2}-734\,{q}^{3} \right) t_{{2}}{t_{{1}}}^{2}}{ \left( p+q \right) ^{
3/2}}}\\
&-{\frac {3}{32768}}\,{\frac { \left( 1548\,{p}^{4}+9796\,{p}^{3}q-7681
\,{p}^{2}{q}^{2}-34954\,p{q}^{3}-17477\,{q}^{4} \right) t_{{3}}}{
 \left( p+q \right) ^{2}}}\\
 &+{\frac {1}{1024}}\,{\frac { \left( 40\,{p}^
{4}-250\,{p}^{3}q+63\,{p}^{2}{q}^{2}+626\,p{q}^{3}+313\,{q}^{4}
 \right) {t_{{1}}}^{3}}{ \left( p+q \right) ^{2}}}\\
&+{\frac {1}{16384}}\,{\frac { \left( 1052\,{p}^{5}+308\,q{p}^{4}-4561\,
{q}^{2}{p}^{3}-284\,{q}^{3}{p}^{2}+4135\,{q}^{4}p+1654\,{q}^{5}
 \right) t_{{2}}}{ \left( p+q \right) ^{5/2}}}\\
&-{\frac {1}{32768}}\,{\frac { \left( 272\,{p}^{6}-236\,{p}^{5}q-176\,{
p}^{4}{q}^{2}+115\,{p}^{3}{q}^{3}+45\,{p}^{2}{q}^{4}-15\,p{q}^{5}-5\,{
q}^{6} \right) t_{{1}}}{ \left( p+q \right) ^{3}}}
\end{aligned}&&&
\end{flalign}

\begin{flalign}
  \begin{aligned}
F_1^1&=\frac{1}{8}t_{{3}}+\frac{1}{6}{t_{{1}}}^{3}+\frac{1}{12}{\frac { \left( p+2\,q
 \right) t_{{2}}}{\sqrt {p+q}}}-\frac{1}{24}{\frac {{p}^{2}t_{{1}}}{p+q}},
\end{aligned}&&&
\end{flalign}
\begin{flalign}
  \begin{aligned}
 F_2^1&=
\frac{1}{2}\,t_{{3}}{t_{{1}}}^{3}+\frac{5}{8}\,t_{{5}}t_{{1}}+\frac{3}{16}\,{t_{{3}}}^{2}\\
&+\frac{5}{6}\,{\frac { \left( p+2\,q \right) t_{{4}}t_{{1}}}{\sqrt {p+q}}}+\frac{1}{4}
\,{\frac { \left( p+2\,q \right) t_{{3}}t_{{2}}}{\sqrt {p+q}}}+\frac{1}{3}\,{
\frac { \left( p+2\,q \right) t_{{2}}{t_{{1}}}^{3}}{\sqrt {p+q}}}\\
&+\frac{1}{8}\,{\frac { \left( {p}^{2}+12\,pq+12\,{q}^{2} \right) t_{{3}}t_{{1}}
}{p+q}}+\frac{1}{12}\,{\frac { \left( {p}^{2}+4\,pq+4\,{q}^{2} \right) {t_{{2}
}}^{2}}{p+q}}+\frac{1}{6}\,q{t_{{1}}}^{4}\\
&-\frac{1}{12}\,{\frac { \left( {p}^{3}-{p}^{2}q-9\,p{q}^{2}-6\,{q}^{3}
 \right) t_{{1}}t_{{2}}}{ \left( p+q \right) ^{3/2}}}
-\frac{1}{48}\,{\frac {q \left( 2\,{p}^{2}-pq-{q}^{2} \right) {t_{{1}}}^{2}}{p
+q}}
-{\frac {1}{5760}}\,{\frac {{p}^{2}{q}^{2}}{p+q}},
\end{aligned}&&&
\end{flalign}

\begin{flalign}
  \begin{aligned}
 F_3^1&=\frac{5}{8}\,t_{{5}}{t_{{1}}}^{4}+\frac{3}{2}\,{t_{{3}}}^{2}{t_{{1}}}^{3}+{\frac {35}{
16}}\,t_{{7}}{t_{{1}}}^{2}+{\frac {15}{4}}\,t_{{5}}t_{{3}}t_{{1}}+\frac{3}{8}
\,{t_{{3}}}^{3}+{\frac {105}{128}}\,t_{{9}}\\
&+{\frac {35}{8}}\,{\frac { \left( p+2\,q \right) t_{{6}}{t_{{1}}}^{2}}{
\sqrt {p+q}}}+\frac{3}{4}\,{\frac { \left( p+2\,q \right) {t_{{3}}}^{2}t_{{2}}
}{\sqrt {p+q}}}+{\frac {35}{16}}\,{\frac { \left( p+2\,q \right) t_{{8
}}}{\sqrt {p+q}}}+\frac{5}{6}\,{\frac { \left( p+2\,q \right) t_{{4}}{t_{{1}}}
^{4}}{\sqrt {p+q}}}\\
&+2\,{\frac { \left( p+2\,q \right) t_{{3}}t_{{2}}{t
_{{1}}}^{3}}{\sqrt {p+q}}}+\frac{5}{2}\,{\frac { \left( p+2\,q \right) t_{{5}}
t_{{2}}t_{{1}}}{\sqrt {p+q}}}+5\,{\frac { \left( p+2\,q \right) t_{{4}
}t_{{3}}t_{{1}}}{\sqrt {p+q}}}\\
&+\frac{10}{3}\,{\frac { \left( {p}^{2}+4\,pq+4\,{q}^{2} \right) t_{{4}}t_{{2}}t
_{{1}}}{p+q}}+{\frac {9}{8}}\,{\frac { \left( {p}^{2}+8\,pq+8\,{q}^{2}
 \right) {t_{{3}}}^{2}t_{{1}}}{p+q}}\\
 &+{\frac {5}{48}}\,{\frac { \left( 
23\,{p}^{2}+140\,pq+140\,{q}^{2} \right) t_{{5}}{t_{{1}}}^{2}}{p+q}}+\frac{1}{4}
\,{\frac { \left( {p}^{2}+10\,pq+10\,{q}^{2} \right) t_{{3}}{t_{{1}}
}^{4}}{p+q}}\\
&+{\frac {7}{288}}\,{\frac { \left( 76\,{p}^{2}+391\,pq+391
\,{q}^{2} \right) t_{{7}}}{p+q}}+\frac{2}{3}\,{\frac { \left( {p}^{2}+4\,pq+4
\,{q}^{2} \right) {t_{{2}}}^{2}{t_{{1}}}^{3}}{p+q}}\\
&+\frac{1}{2}\,{\frac {
 \left( {p}^{2}+4\,pq+4\,{q}^{2} \right) t_{{3}}{t_{{2}}}^{2}}{p+q}}
 +\frac{1}{2}\,{\frac { \left( {p}^{3}+17\,{p}^{2}q+45\,p{q}^{2}+30\,{q}^{3}
 \right) t_{{3}}t_{{2}}t_{{1}}}{ \left( p+q \right) ^{3/2}}}\\
&+\frac{1}{12}\,{
\frac { \left( {p}^{3}+79\,{p}^{2}q+231\,p{q}^{2}+154\,{q}^{3}
 \right) t_{{4}}{t_{{1}}}^{2}}{ \left( p+q \right) ^{3/2}}}\\
 &+{\frac {11
}{12}}\,{\frac { \left( p+2\,q \right) qt_{{2}}{t_{{1}}}^{4}}{\sqrt {p
+q}}}+\frac{1}{9}\,{\frac { \left( {p}^{3}+6\,{p}^{2}q+12\,p{q}^{2}+8\,{q}^{3}
 \right) {t_{{2}}}^{3}}{ \left( p+q \right) ^{3/2}}}\\
 &+{\frac {7}{192}}
\,{\frac { \left( 10\,{p}^{3}+167\,{p}^{2}q+441\,p{q}^{2}+294\,{q}^{3}
 \right) t_{{6}}}{ \left( p+q \right) ^{3/2}}}\\
&-\frac{1}{16}\,{\frac { \left( 2\,{p}^{4}-2\,{p}^{3}q-101\,{p}^{2}{q}^{2}-198
\,p{q}^{3}-99\,{q}^{4} \right) t_{{3}}{t_{{1}}}^{2}}{ \left( p+q
 \right) ^{2}}}+{\frac {5}{24}}\,{q}^{2}{t_{{1}}}^{5}\\
 &-{\frac {1}{384}}
\,{\frac { \left( 57\,{p}^{4}-236\,{p}^{3}q-2769\,{p}^{2}{q}^{2}-5066
\,p{q}^{3}-2533\,{q}^{4} \right) t_{{5}}}{ \left( p+q \right) ^{2}}}\\
&-\frac{1}{6}\,{\frac { \left( {p}^{4}-2\,{p}^{3}q-26\,{p}^{2}{q}^{2}-48\,p{q}^{3
}-24\,{q}^{4} \right) {t_{{2}}}^{2}t_{{1}}}{ \left( p+q \right) ^{2}}}\\
&-{\frac {1}{1440}}\,{\frac { \left( 37\,{p}^{5}+520\,{p}^{4}q-178\,{p}
^{3}{q}^{2}-5172\,{p}^{2}{q}^{3}-7580\,p{q}^{4}-3032\,{q}^{5} \right) 
t_{{4}}}{ \left( p+q \right) ^{5/2}}}\\
&-\frac{1}{24}\,{\frac { \left( 7\,{p}^{3}
-4\,{p}^{2}q-54\,p{q}^{2}-36\,{q}^{3} \right) qt_{{2}}{t_{{1}}}^{2}}{
 \left( p+q \right) ^{3/2}}}\\
&-{\frac {1}{144}}\,{\frac { \left( 9\,{p}^{2}-8\,pq-8\,{q}^{2}
 \right) {q}^{2}{t_{{1}}}^{3}}{p+q}}\\
 &+{\frac {1}{960}}\,{\frac {
 \left( 7\,{p}^{6}-16\,q{p}^{5}-239\,{q}^{2}{p}^{4}-170\,{p}^{3}{q}^{3
}+605\,{q}^{4}{p}^{2}+828\,p{q}^{5}+276\,{q}^{6} \right) t_{{3}}}{
 \left( p+q \right) ^{3}}}\\
&+{\frac {1}{2880}}\,{\frac { \left( 21\,{p}^{5}-2\,{p}^{4}q-120\,{p}^{3
}{q}^{2}-40\,{p}^{2}{q}^{3}+60\,p{q}^{4}+24\,{q}^{5} \right) qt_{{2}}
}{ \left( p+q \right) ^{5/2}}}\\
&+{\frac {7}{5760}}\,{\frac {{p}^{4}{q}^{2}t_{{1}}}{ \left( p+q \right) 
^{2}}}.
\end{aligned}&&&
\end{flalign}

\bibliographystyle{alphaurl}
\bibliography{KPTRref}

\newcommand{\etalchar}[1]{$^{#1}$}
\begin{thebibliography}{AKMnV05}

\bibitem[ACEH18]{ACEHf}
A.~Alexandrov, G.~Chapuy, B.~Eynard, and J.~Harnad.
\newblock Fermionic approach to weighted {H}urwitz numbers and topological
  recursion.
\newblock {\em Comm. Math. Phys.}, 360(2):777--826, 2018.
\newblock \href {https://doi.org/10.1007/s00220-017-3065-9}
  {\path{doi:10.1007/s00220-017-3065-9}}.

\bibitem[AKMnV05]{tv}
Mina Aganagic, Albrecht Klemm, Marcos Mari\~{n}o, and Cumrun Vafa.
\newblock The topological vertex.
\newblock {\em Comm. Math. Phys.}, 254(2):425--478, 2005.
\newblock \href {https://doi.org/10.1007/s00220-004-1162-z}
  {\path{doi:10.1007/s00220-004-1162-z}}.

\bibitem[Ale11]{KSS}
Alexander Alexandrov.
\newblock Cut-and-join operator representation for {K}ontsevich-{W}itten
  tau-function.
\newblock {\em Modern Phys. Lett. A}, 26(29):2193--2199, 2011.
\newblock \href {https://doi.org/10.1142/S0217732311036607}
  {\path{doi:10.1142/S0217732311036607}}.

\bibitem[Ale15]{A1}
Alexander Alexandrov.
\newblock Enumerative geometry, tau-functions and {H}eisenberg-{V}irasoro
  algebra.
\newblock {\em Comm. Math. Phys.}, 338(1):195--249, 2015.
\newblock \href {https://doi.org/10.1007/s00220-015-2379-8}
  {\path{doi:10.1007/s00220-015-2379-8}}.

\bibitem[Ale17]{open}
Alexander Alexandrov.
\newblock Open intersection numbers and free fields.
\newblock {\em Nuclear Phys. B}, 922:247--263, 2017.
\newblock \href {https://doi.org/10.1016/j.nuclphysb.2017.06.019}
  {\path{doi:10.1016/j.nuclphysb.2017.06.019}}.

\bibitem[Ale18]{KS2}
Alexander Alexandrov.
\newblock Cut-and-join description of generalized {B}rezin-{G}ross-{W}itten
  model.
\newblock {\em Adv. Theor. Math. Phys.}, 22(6):1347--1399, 2018.
\newblock \href {https://doi.org/10.4310/ATMP.2018.v22.n6.a1}
  {\path{doi:10.4310/ATMP.2018.v22.n6.a1}}.

\bibitem[Ale21a]{H3_1}
Alexander Alexandrov.
\newblock {KP integrability of triple Hodge integrals. I. From Givental group
  to hierarchy symmetries}.
\newblock {\em Commun. Number Theory Phys.}, 15(3):615--650, 2021.
\newblock \href {https://doi.org/10.4310/CNTP.2021.v15.n3.a6}
  {\path{doi:10.4310/CNTP.2021.v15.n3.a6}}.

\bibitem[Ale21b]{H3_2}
Alexander Alexandrov.
\newblock K{P} integrability of triple {H}odge integrals. {II}. {G}eneralized
  {K}ontsevich matrix model.
\newblock {\em Anal. Math. Phys.}, 11(1):Paper No. 24, 82, 2021.
\newblock \href {https://doi.org/10.1007/s13324-020-00451-7}
  {\path{doi:10.1007/s13324-020-00451-7}}.

\bibitem[BBC{\etalchar{+}}24]{Boro1}
Ga\"etan Borot, Vincent Bouchard, Nitin~K. Chidambaram, Thomas Creutzig, and
  Dmitry Noshchenko.
\newblock Higher {A}iry structures, {$\mathcal W$} algebras and topological
  recursion.
\newblock {\em Mem. Amer. Math. Soc.}, 296(1476):v+108, 2024.
\newblock \href {https://doi.org/10.1090/memo/1476}
  {\path{doi:10.1090/memo/1476}}.

\bibitem[BKS24]{Boro2}
Ga\"etan Borot, Reinier Kramer, and Yannik Sch\"uler.
\newblock Higher {A}iry structures and topological recursion for singular
  spectral curves.
\newblock {\em Ann. Inst. Henri Poincar\'e{} D}, 11(1):1--146, 2024.
\newblock \href {https://doi.org/10.4171/aihpd/168}
  {\path{doi:10.4171/aihpd/168}}.

\bibitem[BMn08]{BM}
Vincent Bouchard and Marcos Mari\~{n}o.
\newblock Hurwitz numbers, matrix models and enumerative geometry.
\newblock In {\em From {H}odge theory to integrability and {TQFT}
  tt*-geometry}, volume~78 of {\em Proc. Sympos. Pure Math.}, pages 263--283.
  Amer. Math. Soc., Providence, RI, 2008.
\newblock \href {https://doi.org/10.1090/pspum/078/2483754}
  {\path{doi:10.1090/pspum/078/2483754}}.

\bibitem[Che18]{Chen}
Lin Chen.
\newblock Bouchard-{K}lemm-{M}arino-{P}asquetti conjecture for {$\mathbb C^3$}.
\newblock In {\em Topological recursion and its influence in analysis,
  geometry, and topology}, volume 100 of {\em Proc. Sympos. Pure Math.}, pages
  83--102. Amer. Math. Soc., Providence, RI, 2018.

\bibitem[CN19]{CN}
Leonid Chekhov and Paul Norbury.
\newblock Topological recursion with hard edges.
\newblock {\em Internat. J. Math.}, 30(3):1950014, 29, 2019.
\newblock \href {https://doi.org/10.1142/S0129167X19500149}
  {\path{doi:10.1142/S0129167X19500149}}.

\bibitem[DBOSS14]{DOSS}
P.~Dunin-Barkowski, N.~Orantin, S.~Shadrin, and L.~Spitz.
\newblock Identification of the {G}ivental formula with the spectral curve
  topological recursion procedure.
\newblock {\em Comm. Math. Phys.}, 328(2):669--700, 2014.
\newblock \href {https://doi.org/10.1007/s00220-014-1887-2}
  {\path{doi:10.1007/s00220-014-1887-2}}.

\bibitem[DLYZ20]{DLYZ}
Boris Dubrovin, Si-Qi Liu, Di~Yang, and Youjin Zhang.
\newblock Hodge-{GUE} correspondence and the discrete {K}d{V} equation.
\newblock {\em Comm. Math. Phys.}, 379(2):461--490, 2020.
\newblock \href {https://doi.org/10.1007/s00220-020-03846-6}
  {\path{doi:10.1007/s00220-020-03846-6}}.

\bibitem[DN18]{DN}
Norman Do and Paul Norbury.
\newblock Topological recursion on the {B}essel curve.
\newblock {\em Commun. Number Theory Phys.}, 12(1):53--73, 2018.
\newblock \href {https://doi.org/10.4310/CNTP.2018.v12.n1.a2}
  {\path{doi:10.4310/CNTP.2018.v12.n1.a2}}.

\bibitem[DVV91]{DVV}
Robbert Dijkgraaf, Herman Verlinde, and Erik Verlinde.
\newblock Loop equations and {V}irasoro constraints in nonperturbative
  two-dimensional quantum gravity.
\newblock {\em Nuclear Phys. B}, 348(3):435--456, 1991.
\newblock \href {https://doi.org/10.1016/0550-3213(91)90199-8}
  {\path{doi:10.1016/0550-3213(91)90199-8}}.

\bibitem[EMS11]{EMS}
Bertrand Eynard, Motohico Mulase, and Bradley Safnuk.
\newblock The {L}aplace transform of the cut-and-join equation and the
  {B}ouchard-{M}ari\~{n}o conjecture on {H}urwitz numbers.
\newblock {\em Publ. Res. Inst. Math. Sci.}, 47(2):629--670, 2011.
\newblock \href {https://doi.org/10.2977/PRIMS/47}
  {\path{doi:10.2977/PRIMS/47}}.

\bibitem[EO07]{EO}
B.~Eynard and N.~Orantin.
\newblock Invariants of algebraic curves and topological expansion.
\newblock {\em Commun. Number Theory Phys.}, 1(2):347--452, 2007.
\newblock \href {https://doi.org/10.4310/CNTP.2007.v1.n2.a4}
  {\path{doi:10.4310/CNTP.2007.v1.n2.a4}}.

\bibitem[EO09]{EO1}
Bertrand Eynard and Nicolas Orantin.
\newblock Topological recursion in enumerative geometry and random matrices.
\newblock {\em J. Phys. A}, 42(29):293001, 117, 2009.
\newblock \href {https://doi.org/10.1088/1751-8113/42/29/293001}
  {\path{doi:10.1088/1751-8113/42/29/293001}}.

\bibitem[FKN91]{Fuku}
Masafumi Fukuma, Hikaru Kawai, and Ryuichi Nakayama.
\newblock Continuum {S}chwinger-{D}yson equations and universal structures in
  two-dimensional quantum gravity.
\newblock {\em Internat. J. Modern Phys. A}, 6(8):1385--1406, 1991.
\newblock \href {https://doi.org/10.1142/S0217751X91000733}
  {\path{doi:10.1142/S0217751X91000733}}.

\bibitem[FKN92]{F}
Masafumi Fukuma, Hikaru Kawai, and Ryuichi Nakayama.
\newblock Infinite-dimensional {G}rassmannian structure of two-dimensional
  quantum gravity.
\newblock {\em Comm. Math. Phys.}, 143(2):371--403, 1992.
\newblock \href {https://doi.org/10.1007/BF02099014}
  {\path{doi:10.1007/BF02099014}}.

\bibitem[FP00]{FP}
C.~Faber and R.~Pandharipande.
\newblock Hodge integrals and {G}romov-{W}itten theory.
\newblock {\em Invent. Math.}, 139(1):173--199, 2000.
\newblock \href {https://doi.org/10.1007/s002229900028}
  {\path{doi:10.1007/s002229900028}}.

\bibitem[Giv01]{Giv1}
Alexander~B. Givental.
\newblock Gromov-{W}itten invariants and quantization of quadratic
  {H}amiltonians.
\newblock volume~1, pages 551--568, 645. 2001.
\newblock Dedicated to the memory of I. G. Petrovskii on the occasion of his
  100th anniversary.
\newblock \href {https://doi.org/10.17323/1609-4514-2001-1-4-551-568}
  {\path{doi:10.17323/1609-4514-2001-1-4-551-568}}.

\bibitem[GJ97]{GJ}
I.~P. Goulden and D.~M. Jackson.
\newblock Transitive factorisations into transpositions and holomorphic
  mappings on the sphere.
\newblock {\em Proc. Amer. Math. Soc.}, 125(1):51--60, 1997.
\newblock \href {https://doi.org/10.1090/S0002-9939-97-03880-X}
  {\path{doi:10.1090/S0002-9939-97-03880-X}}.

\bibitem[GN92]{GN}
David~J. Gross and Michael~J. Newman.
\newblock Unitary and {H}ermitian matrices in an external field. {II}. {T}he
  {K}ontsevich model and continuum {V}irasoro constraints.
\newblock {\em Nuclear Phys. B}, 380(1-2):168--180, 1992.
\newblock \href {https://doi.org/10.1016/0550-3213(92)90520-L}
  {\path{doi:10.1016/0550-3213(92)90520-L}}.

\bibitem[GP99]{GP}
T.~Graber and R.~Pandharipande.
\newblock Localization of virtual classes.
\newblock {\em Invent. Math.}, 135(2):487--518, 1999.
\newblock \href {https://doi.org/10.1007/s002220050293}
  {\path{doi:10.1007/s002220050293}}.

\bibitem[GW17]{GW}
Shuai Guo and Gehao Wang.
\newblock Virasoro constraints and polynomial recursion for the linear {H}odge
  integrals.
\newblock {\em Lett. Math. Phys.}, 107(4):757--791, 2017.
\newblock \href {https://doi.org/10.1007/s11005-016-0923-x}
  {\path{doi:10.1007/s11005-016-0923-x}}.

\bibitem[HGW23]{Lapl}
Alexander Hock, Harald Grosse, and Raimar Wulkenhaar.
\newblock A {L}aplacian to compute intersection numbers on {$\overline{\mathcal
  M}_{g,n}$} and correlation functions in {NCQFT}.
\newblock {\em Comm. Math. Phys.}, 399(1):481--517, 2023.
\newblock \href {https://doi.org/10.1007/s00220-022-04557-w}
  {\path{doi:10.1007/s00220-022-04557-w}}.

\bibitem[Kaz09]{Kaza}
M.~Kazarian.
\newblock K{P} hierarchy for {H}odge integrals.
\newblock {\em Adv. Math.}, 221(1):1--21, 2009.
\newblock \href {https://doi.org/10.1016/j.aim.2008.10.017}
  {\path{doi:10.1016/j.aim.2008.10.017}}.

\bibitem[KMZ96]{KMZ}
R.~Kaufmann, Yu. Manin, and D.~Zagier.
\newblock Higher {W}eil-{P}etersson volumes of moduli spaces of stable
  {$n$}-pointed curves.
\newblock {\em Comm. Math. Phys.}, 181(3):763--787, 1996.
\newblock \href {https://doi.org/10.1007/BF02101297}
  {\path{doi:10.1007/BF02101297}}.

\bibitem[Kon92]{Kon92}
Maxim Kontsevich.
\newblock Intersection theory on the moduli space of curves and the matrix
  {A}iry function.
\newblock {\em Comm. Math. Phys.}, 147(1):1--23, 1992.
\newblock \href {https://doi.org/10.1007/BF02099526}
  {\path{doi:10.1007/BF02099526}}.

\bibitem[Kra23]{Reinier}
Reinier Kramer.
\newblock K{P} hierarchy for {H}urwitz-type cohomological field theories.
\newblock {\em Commun. Number Theory Phys.}, 17(2):249--291, 2023.
\newblock \href {https://doi.org/10.4310/CNTP.2023.v17.n2.a1}
  {\path{doi:10.4310/CNTP.2023.v17.n2.a1}}.

\bibitem[LLLZ09]{tvm}
Jun Li, Chiu-Chu~Melissa Liu, Kefeng Liu, and Jian Zhou.
\newblock A mathematical theory of the topological vertex.
\newblock {\em Geom. Topol.}, 13(1):527--621, 2009.
\newblock \href {https://doi.org/10.2140/gt.2009.13.527}
  {\path{doi:10.2140/gt.2009.13.527}}.

\bibitem[LW16]{Wang}
Xiaobo Liu and Gehao Wang.
\newblock Connecting the {K}ontsevich-{W}itten and {H}odge {T}au-functions by
  the {$\widehat{GL(\infty)}$} operators.
\newblock {\em Comm. Math. Phys.}, 346(1):143--190, 2016.
\newblock \href {https://doi.org/10.1007/s00220-016-2671-2}
  {\path{doi:10.1007/s00220-016-2671-2}}.

\bibitem[MJD00]{MJ}
T.~Miwa, M.~Jimbo, and E.~Date.
\newblock {\em Solitons}, volume 135 of {\em Cambridge Tracts in Mathematics}.
\newblock Cambridge University Press, Cambridge, 2000.
\newblock Differential equations, symmetries and infinite-dimensional algebras,
  Translated from the 1993 Japanese original by Miles Reid.

\bibitem[MMM21]{M3}
A.~Mironov, V.~Mishnyakov, and A.~Morozov.
\newblock Non-{A}belian {$W$}-representation for {GKM}.
\newblock {\em Phys. Lett. B}, 823:Paper No. 136721, 13, 2021.
\newblock \href {https://doi.org/10.1016/j.physletb.2021.136721}
  {\path{doi:10.1016/j.physletb.2021.136721}}.

\bibitem[MMMR21]{MMMR}
A.~Mironov, V.~Mishnyakov, A.~Morozov, and R.~Rashkov.
\newblock {Matrix model partition function by a single constraint}.
\newblock {\em Eur. Phys. J. C}, 81(12):1140, 2021.
\newblock \href {https://doi.org/10.1140/epjc/s10052-021-09912-0}
  {\path{doi:10.1140/epjc/s10052-021-09912-0}}.

\bibitem[MMS96]{MMS}
A.~Mironov, A.~Morozov, and G.~W. Semenoff.
\newblock Unitary matrix integrals in the framework of the generalized
  {K}ontsevich model.
\newblock {\em Internat. J. Modern Phys. A}, 11(28):5031--5080, 1996.
\newblock \href {https://doi.org/10.1142/S0217751X96002339}
  {\path{doi:10.1142/S0217751X96002339}}.

\bibitem[MS09]{MS}
A.~Morozov and Sh. Shakirov.
\newblock Generation of matrix models by {$\hat W$}-operators.
\newblock {\em J. High Energy Phys.}, (4):064, 33, 2009.
\newblock \href {https://doi.org/10.1088/1126-6708/2009/04/064}
  {\path{doi:10.1088/1126-6708/2009/04/064}}.

\bibitem[Mum83]{Mumford}
David Mumford.
\newblock Towards an enumerative geometry of the moduli space of curves.
\newblock In {\em Arithmetic and geometry, {V}ol. {II}}, volume~36 of {\em
  Progr. Math.}, pages 271--328. Birkh\"{a}user Boston, Boston, MA, 1983.
\newblock \href {https://doi.org/10.1007/978-1-4757-9286-7_12}
  {\path{doi:10.1007/978-1-4757-9286-7_12}}.

\bibitem[MZ00]{MZ}
Yuri~I. Manin and Peter Zograf.
\newblock Invertible cohomological field theories and {W}eil-{P}etersson
  volumes.
\newblock {\em Ann. Inst. Fourier (Grenoble)}, 50(2):519--535, 2000.
\newblock \href {https://doi.org/10.5802/aif.1764}
  {\path{doi:10.5802/aif.1764}}.

\bibitem[Nor20]{NorbS}
Paul Norbury.
\newblock {Enumerative geometry via the moduli space of super Riemann
  surfaces}, 2020.
\newblock \href {https://arxiv.org/abs/2005.04378} {\path{arXiv:2005.04378}}.

\bibitem[Nor22]{NP1}
Paul Norbury.
\newblock Gromov-{W}itten invariants of {$\Bbb{P}^1$} coupled to a {K}d{V} tau
  function.
\newblock {\em Adv. Math.}, 399:Paper No. 108227, 43, 2022.
\newblock \href {https://doi.org/10.1016/j.aim.2022.108227}
  {\path{doi:10.1016/j.aim.2022.108227}}.

\bibitem[Nor23]{Norb}
Paul Norbury.
\newblock A new cohomology class on the moduli space of curves.
\newblock {\em Geom. Topol.}, 27(7):2695--2761, 2023.
\newblock \href {https://doi.org/10.2140/gt.2023.27.2695}
  {\path{doi:10.2140/gt.2023.27.2695}}.

\bibitem[SI20]{oeis}
Neil J.~A. Sloane and The OEIS~Foundation Inc.
\newblock The on-line encyclopedia of integer sequences, 2020.
\newblock URL: \url{http://oeis.org/?language=english}.

\bibitem[Tel12]{Teleman}
Constantin Teleman.
\newblock The structure of 2{D} semi-simple field theories.
\newblock {\em Invent. Math.}, 188(3):525--588, 2012.
\newblock \href {https://doi.org/10.1007/s00222-011-0352-5}
  {\path{doi:10.1007/s00222-011-0352-5}}.

\bibitem[Wit91]{Wit91}
Edward Witten.
\newblock Two-dimensional gravity and intersection theory on moduli space.
\newblock In {\em Surveys in differential geometry ({C}ambridge, {MA}, 1990)},
  pages 243--310. Lehigh Univ., Bethlehem, PA, 1991.
\newblock \href {https://doi.org/10.4310/SDG.1990.v1.n1.a5}
  {\path{doi:10.4310/SDG.1990.v1.n1.a5}}.

\bibitem[YZ23]{YZ}
Di~Yang and Don Zagier.
\newblock Mapping partition functions, 2023.
\newblock \href {https://arxiv.org/abs/2308.03568} {\path{arXiv:2308.03568}}.

\bibitem[Zho09]{Zhou1}
Jian Zhou.
\newblock Local mirror symmetry for one-legged topological vertex, 2009.
\newblock \href {https://arxiv.org/abs/0910.4320} {\path{arXiv:0910.4320}}.

\bibitem[Zho13]{Zhou}
Jian Zhou.
\newblock {Solution of W-Constraints for R-Spin Intersection Numbers}, 2013.
\newblock \href {https://arxiv.org/abs/1305.6991} {\path{arXiv:1305.6991}}.

\end{thebibliography}

\end{document}